\def \N{\mathbb{N}}
\def \R{\mathbb{R}}
\def \E{\mathbb{E}}
\theoremstyle{plain} 
\newtheorem{thm}{Theorem}[section] 
\newtheorem{lem}[thm]{Lemma} 
\newtheorem{prop}[thm]{Proposition} 
\newtheorem{hp}[thm]{Hypotheses} 
\newtheorem{defn}[thm]{Definition}
\theoremstyle{definition} 
\newtheorem{rem}[thm]{Remark}
\numberwithin{equation}{section}
\newcommand{\be}{\begin{equation}}
\newcommand{\ee}{\end{equation}}
\newcommand{\miezz}{\frac{1}{2}}
\newcommand{\eps}{\varepsilon}
\newcommand{\norm}[1]{\ensuremath{\left\Arrowvert #1 \right\Arrowvert}}
\newcommand{\norminf}[1]{\ensuremath{\left\Arrowvert #1 \right\Arrowvert_\infty}}
\newcommand{\bx}{\bar{x}}
\newcommand{\by}{\bar{y}}
\newcommand{\bz}{\bar{z}}
\newcommand{\bt}{\bar{t}}
\newcommand{\dw}{\mathbf{d}_1}
\newcommand{\seco}{\left(\delta+\frac 1\delta\left(|x-z|^4+|y-z|^4+|x+y-2z|^2\right)\right)}
\newcommand{\seca}{\left(\delta+\frac 1\delta\left(|\bx-\bz|^4+|\by-\bz|^4+|\bx+\by-2\bz|^2\right)\right)}
\newcommand{\usb}{u(\bt,\bx)+u(\bt,\by)-2u(\bt,\bz)}
\newcommand{\Gx}{\Gamma(\bx,\sigma)}
\newcommand{\Gy}{\Gamma(\by,\sigma)}
\newcommand{\Gz}{\Gamma(\bz,\sigma)}
\begin{document}

\title{A second-order Mean Field Games model with controlled diffusion}

\author{Vincenzo Ignazio}\thanks{Department of Mathematics, ETH Z\"urich. R\"amistrasse 101, 8092, Z\"urich, Switzerland. \texttt{Vincenzo.Ignazio@ksz.ch}} 


\author{Michele Ricciardi}\thanks{Dipartimento di Economia, Università LUISS Guido Carli. Viale Romania 32, Roma, Italy. \texttt{ricciardim@luiss.it}}

\date{\today}

\maketitle

\begin{abstract}
Mean Field Games (MFG) theory describes strategic interactions in differential games with a large number of small and indistinguishable players. Traditionally, the players' control impacts only the drift term in the system's dynamics, leaving the diffusion term uncontrolled. This paper explores a novel scenario where agents control both drift and diffusion. This leads to a fully non-linear MFG system with a fully non-linear Hamilton-Jacobi-Bellman equation. We use viscosity arguments to prove existence of solutions for the HJB equation, and then we adapt and extend a result from Krylov to prove a $\mathcal C^3$ regularity for $u$ in the space variable. This allows us to prove a well-posedness result for the MFG system.
\end{abstract}

\section{Introduction}
Mean Field Games theory has been introduced in 2006 by Lasry and Lions, in the pioneering articles \cite{LL1,LL2,LL-japan}, and independently in the same years by Caines, Huang and Malhamé \cite{HCM}. The theory aims to describe the asymptotic behavior of differential games involving a large number $N$ of players (also called agents), who choose their optimal strategy to minimize a certain cost functional.

Under some symmetry assumptions for the players' dynamic and the cost functionals, it has been proved that the macroscopic structure of the game, when $N\to+\infty$, can be described by a system of PDEs called \emph{Mean Field Games (MFG) system}. Here, a backward Hamilton-Jacobi-Bellman equation for the single agent's value function $u$ is coupled with a forward Fokker-Planck equation for the density evolution $m$ of the population.

To be more precise, in the asymptotic formulation the dynamic of the single agent is usually described by the following SDE:
$$
\begin{cases}
dX_s=b(X_t,\alpha_t)\,dt+\sqrt 2\sigma(X_t)\,dB_t\,,\\
X_t=x\in\R^d\,,
\end{cases}
$$
where $\alpha_\cdot$ denotes the control chosen by the single agent, chosen from a certain set of controls $\mathcal A$, $(B_t)_t$ is a $d$-dimensional Brownian motion, and $b$ and $\sigma$ are respectively the drift term and in the diffusion matrix of the process.

The cost for the single agent is usually given by the following functional:
$$
J(t,x,\alpha)=\E\left[\int_t^T\big(L(X_s,\alpha_s)\,ds+F(X_s,m(s))\big)\,ds+G(X_T,m(T))\right]\,,
$$
where $L$, $F$ and $G$ represents respectively the Lagrangian cost for the control, the running cost and the terminal cost, and where $m(\cdot)$ is given by the law of the population. In the equilibrium point we have $\mathscr L(X_t)=m(t)$ for all $t\in[0,T]$. This gives rise to the MFG system for the couple $(u,m)$:
\begin{equation}\label{standardmfg}
\begin{cases}
u_t+\mathrm{tr}(a(x)D^2u)+H(x,\nabla u)+F(x,m)=0\,,\\
m_t-\displaystyle\sum\partial^2_{ij}(a_{ij}(x)m)+\mathrm{div}(mH_p(x,\nabla u))=0\,,\\
u(T,x)=G(x,m(T))\,,\qquad m(0,x)=m_0(x)\,,
\end{cases}
\end{equation}
where $a=\sigma\sigma^*$ and where $H$ is defined by
$$
H(x,p)=\inf_{\alpha\in\mathcal A}\big\{b(x,\alpha)\cdot p+L(x,\alpha)\big\}\,.
$$
Mean Field Games have been widely studied in the literature, and most important topics such as existence of solutions, uniqueness and regularity, ergodic behavior etc... have been proved in various frameworks. We refer to \cite{cardnotes,CP-Cime,CD1,CD2} for a thorough description of the problem, both from a PDE and probabilistic approach.

So far, most of the literature mainly focuses on similar cases to the one described before, i.e. where the control of the generic agent acts only on the drift term $b$, and where the diffusion $\sigma$ is uncontrolled. But in many applied models it is highly required to consider situations where the player can also control its own diffusion. See, for instance, the works of Avellaneda et al. in \cite{avella1,avella2,avella3} for financial applications. See also \cite{hanson,Pham}.

In this paper we want to study a particular case in this direction. We consider a game where the generic agent has two controls $(\alpha,\sigma)\in\R^d\times\R$ to choose, the first one for the drift term and the second one for the diffusion matrix. This gives rise, in the cost functional, to two different Lagrangian costs $L^1$ and $L^2$, hence to two different Hamiltonians $H^1$ and $H^2$. We will be more specific about this formulation in the next section.

The Mean Field Games system we want to study takes the following form:
\begin{equation}\label{eq:mfg}
\left\{\begin{array}{ll}
\displaystyle u_t + H^1(t,x,\nabla u)+ H^2(t,x,\Delta u)+F(t,x,m)=0\,, & (t,x)\in(0,T)\times\R^d\,,\\
\displaystyle m_t -\Delta\big(mH^2_q(t,x,\Delta u)\big)+\mathrm{div}(mH_p^1(t,x,\nabla u))=0\,, & (t,x)\in(0,T)\times\R^d\,,\\
u(T,x)=G(x,m(T))\,,\qquad m(0,x)=m_0(x)\,, & x\in\R^d\,.	
\end{array}
\right.
\end{equation}
Here, $H^1(t,x,p)$ and $H^2(t,x,X)$ denote the two Hamiltonians of the system, $F$ is the running cost and $G$ the final play-off; with the notation $H_p^1$ and $H_{q_iq_j}^2$ we mean respectively the gradient and the partial derivative of $H^1$ and $H^2$ with respect to the last variable.\\

Some cases of fully non-linear Mean Field Games were studied in the literature. In \cite{dragoni} was studied an ergodic Mean Field Games with H\"ormander diffusions, and a case of Mean-Field Games with jumps was studied in \cite{dipersio}, where the control involves the drift, the diffusion and the jump size. In \cite{andrade} it is exploited a stationary fully nonlinear Mean Field Game, using a variational approach. In \cite{espen} we find a general result for parabolic fully-non linear Mean Field Games. Some Mean Field Games models belong to the cases studied both in this article and in \cite{espen}, but there are some differences: we consider $(t,x)$-depending Hamiltonians, and we restrict ourselves to the local case, whereas in \cite{espen} there is no-dependence on $(t,x)$ in the coefficients, but it is also considered the non-local case.

More extended is the literature on fractional Mean Field Games. We cite, among the others, \cite{olav,annalisa}.

As far as we know, there are no results for the well-posedness of a Mean Field Games system like \eqref{eq:mfg}. This is the aim of this paper.\\

Observe that we are considering here the case $x\in\R^d$. This is a starting point which is interesting in many applications. But in many applied models it is interesting to consider also the cases where the state dynamic of the single agent is confined into a domain $\Omega\subsetneq\R^d$. For example, the dynamic can represent the evolution of a certain capital stock, and it is natural to require $X_s\ge0$ for all $s\ge0$.

This situation can fall either in the case of Mean Field Games with Neumann boundary conditions or in the state constraint case, and we aim to consider these cases for future research. Observe that these kind of Mean Field Games have been extensively studied in the literature for uncontrolled diffusion, see e.g. \cite{cirant,Ricciardiparabolic,Porrettaweak,RicciardiNeumann,Ricciardi3} for the case of Neumann boundary conditions, and \cite{CaCa,CaCaCa,Mendico,CaMa,CapMarRic,ricciardi1, PorrettaRicciardiergodic} for the invariance condition and state constraint cases. In the fully non-linear case, the single HJB equation has been studied also for Neumann boundary conditions, see e.g. \cite{barles}.\\

Clearly, the system \eqref{eq:mfg} presents more difficulties than \eqref{standardmfg}. In the latter, the Hamilton-Jacobi-Bellman equation for the value function $u$ is semi-linear; in the former, it is fully non-linear, and the second-order coefficient of the Fokker-Planck equation strongly depends on $u$.

The well-posedness of the HJB equation will be explained in this paper, but it is not particularly difficult. These kind of equations have been widely studied in the literature, using viscosity arguments (see e.g. \cite{barles,crandall2,crandall,lionsHJB,tataru}), and while there are no specific references for a HJB equation as in \eqref{eq:mfg}, similar ideas apply to establish existence and uniqueness of solutions.

The main problem relies on the regularity of solutions. Since the coefficient for $m$ in the Laplacian term is $H^2_{q}(t,x,\Delta u)$, we must have at least $u\in W^{2,\infty}_{loc}(\R^d)$ to have existence of weak solutions for the Fokker-Planck equation (see \cite{lsu}). This cannot be achieved by viscosity arguments, since the best regularity result we can obtain is a semiconcavity estimate in the space variable.

Moreover, we have to apply a fixed-point theorem to obtain existence of solutions for \eqref{eq:mfg}. To do that, we need a $W^{1,\infty}$ norm on $H^2_q(t,x,\Delta u)$ in the space variable to prove the relative compactness of the fixed point map. This means that we need a $\mathcal C^3$ regularity for $u$ with respect to $x$. This will be done applying a regularity result of Krylov, see \cite{krylov1,krylov2}, and extending it in the case of non-smooth Hamiltonians.

We give a short summary of the results.

In Section \ref{sec2}, as already said, we give a stochastic interpretation of the system, with a process subjected to two different controls, $\alpha$ for the drift term and $\sigma$ for the diffusion term. Then we prove, in a simpler case, that the value function of this game is actually a viscosity solution of a fully nonlinear Hamilton-Jacobi equation.

In Section \ref{sec3} we start studying the Hamilton-Jacobi equation of the system \eqref{eq:mfg}, readapting the viscosity arguments exploited in \cite{crandall,crandall2} to obtain existence and uniqueness of solutions. Further we obtain, with a strengthening of the hypotheses, Lipschitz and semiconcave estimates for $u$.

In Section \ref{sec4} we restrict ourselves in a regular case, with stronger hypotheses on $H^1$ and $H^2$; applying the already mentioned result of Krylov, we obtain that $u\in\mathcal C^{1+\frac\gamma 2, 2+\gamma}$ for a certain $0<\gamma<1$, and so $u$ is actually a solution of a linear PDE. Then we can linearize the problem and obtain an estimate for $u$ in $\mathcal C^{1+\frac\alpha 2, 3+\alpha}$.

These regularity results will be essential to obtain, by approximation, a $\mathcal C^{1+\frac\alpha 2,3+\alpha}$ solution for the value function $u$ in our case of Bellman operators. This will be done in Section \ref{sec5}.
Eventually, in Section \ref{sec6} we use the regularity obtained before in order to prove existence and uniqueness of solutions for the problem \eqref{eq:mfg}, with a classical fixed point argument.

\section{From stochastic model to deterministic PDEs}\label{sec2}
Now we give a stochastic interpretation of the system \eqref{eq:mfg}.

In this framework, the generic player chooses two controls, $\alpha_\cdot$ and $\sigma_\cdot$, and plays his dynamic, described by the following process:
\begin{equation}\label{eq:dyn}
\begin{cases}
dX_s^{\alpha,\sigma}=b(X_s^{\alpha,\sigma},\alpha_s)ds + \Gamma(X_s^{\alpha,\sigma},\sigma_s) dB_s\,,\\
X_t^{\alpha,\sigma}=x\,,
\end{cases}
\end{equation}
where $x\in\R^d$, $t\in[0,T)$ and the random variable $X_s$ takes values in the whole space $\R^d$.

The time dependent control variables $\alpha_s,\sigma_s$ live in the space of bounded controls $\mathcal U\times\mathcal S$, denoting the compact sets $\mathcal{U}\subset\R^d$ and $\mathcal S\subset\R$, whereas for the continuous functions
$$
b:\R^d\times\mathcal U\to\R^d\,,\qquad\Gamma:\R^d\times\mathcal S\to\R
$$
we require that, for some constants $M>0$, $\lambda_2>\lambda_1>0$,
\begin{equation}\label{eq:bG}\begin{split}
\begin{array}{ll}
|b(x,\alpha)|\le M &\forall x\in\R^d\,,\, \alpha\in\mathcal U\,,\\
\lambda_1 <\Gamma(x,\sigma)<\lambda_2 &\forall x\in \R^d\,,\,\sigma\in\mathcal S\,.
\end{array}\end{split}\end{equation}
In particular, the bound from below for $\Gamma$ ensures the uniform ellipticity of the process. We denote the spaces of admissible controls where $\alpha$ and $\sigma$ live respectively as $\mathcal A_t^{\mathcal U}$ and $\mathcal A_t^{\mathcal S}$. From now on we omit the superscripts on the process to simplify the notations.\\

The cost function for the player is defined as 
\begin{equation*}
\mathcal J(x,t,\alpha_\cdot,\sigma_\cdot):=\E\left[\int_t^T\!\!\Big(L^1(s,X_s,\alpha_s)+L^2(s,X_s,\sigma_s)+F(s,X_s,m(s))\Big)ds + G\big(X_T,m(T)\big)\right],
\end{equation*}
where $m(s)$ is a density function, which is fixed at the moment and which will denote the density of the generic player, with initial condition $X_0=m_0.$

Our aim is to find a Hamilton-Jacobi equation, which is solved by the function
$$
u(x,t):=\inf\limits_{\substack{\alpha_\cdot\in\mathcal A_t^{\mathcal U}\\ \sigma_\cdot\in\mathcal A_t^{\mathcal S}}}\mathcal J(x,t,\alpha_\cdot,\sigma_\cdot)\,.
$$
For this we use the dynamic programming principle written in the following form:
\begin{equation}\label{eq:dpp}
\begin{split}
u(x,t)=\inf\limits_{\substack{\alpha_\cdot\in\mathcal A_t^{\mathcal U}\\ \sigma_\cdot\in\mathcal A_t^{\mathcal S}}} \E\left[\int_t^{t+h}\!\!\Big(L^1(s,X_s,\alpha_s)\right. &+L^2(s,X_s,\sigma_s)\\
&+F(s,X_s,m(s))\left.\Big)ds + v(X_{t+h},t+h)\right]\,.
\end{split}
\end{equation}
The proof of the verification theorem in this general case is quite technical and can be found in \cite{soner}. We will give here a direct proof in a simpler case, where the process of the generic player follows the equation
$$
\begin{cases}
dX_s^{\alpha,\sigma}=\alpha_s\,ds+\sigma_s\,dB_s\\
X_t^{\alpha,\sigma}=x\,,
\end{cases}
$$
where the time dependent control variables $\alpha_s$, $\sigma_s$ live in the space of bounded controls $\mathcal U\times\mathcal S$, and the matrices in $\mathcal S$ are bounded from below (and from above) by a positive constant, as in \eqref{eq:bG}.

The following lemma will come in handy in the proof of the main result:
\begin{lem}\label{lem:handy}
Let $t,r\in[0,T]$, $x,y\in\R^d$. We consider the two processes
\begin{equation}\label{eq:dinXY}
\begin{array}{ll}
\begin{cases}
dX_s=\alpha_s\,ds+\sigma_s\,dB_s\,,\\
X_t=x\,,
\end{cases}&\begin{cases}
dY_s=\beta_s\,ds+\eta_s\,dB_s\,,\\
Y_r=y\,,
\end{cases}\end{array}
\end{equation}
with $X_s=x$ for $s<t$ and $Y_s=y$ for $s<r$, and where $\alpha_\cdot$,$\beta_\cdot\in\mathcal A_t^{\mathcal U}$, $\sigma_\cdot,\eta_\cdot\in\mathcal A_t^{\mathcal S}$ are bounded processes in $[0,T]$.

Let $h>0$. Then there exists a constant $C$ (not depending on $h$) such that
\begin{equation}\label{eq:traiettorievicine}
\E\left[\sup\limits_{t\le s\le t+h}|X_s-Y_s|\right]\le C\left(|x-y|+\sqrt{|t-r|}+M\sqrt h\right)\,,
\end{equation}
where $C$ depends on $\norminf{\alpha}$, $\norminf{\beta}$, $\norminf{\sigma}$, $\norminf{\eta}$ and where $M:=\norminf{\alpha-\beta}+\norminf{\eta-\sigma}\,.$

In particular, we have
\begin{equation}\label{eq:stabilita}
\E\left[\sup\limits_{t\le s\le t+h} |X_s-x|\right]\le C\sqrt h\,.
\end{equation}
\end{lem}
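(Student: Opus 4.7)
The estimate \eqref{eq:traiettorievicine} compares two It\^o diffusions that start at different times and different points. The natural strategy is first to bring $Y$ to the common time $t$ (absorbing the contribution into the $\sqrt{|t-r|}$ term) and then to compare $X$ and $Y$ on the interval $[t,t+h]$ using standard moment inequalities for stochastic integrals (Doob's maximal inequality together with the It\^o isometry, i.e.\ BDG in $L^2$ followed by Cauchy--Schwarz to pass to $L^1$). I would assume without loss of generality that $r\le t$; the case $r>t$ is handled by swapping the roles of $X$ and $Y$, or by observing that on $[t,\min(r,t+h)]$ the process $Y$ is frozen at $y$ so one simply invokes \eqref{eq:stabilita} plus the triangle inequality to reduce to the other case.

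First I would handle the interval $[r,t]$. Since
\[
Y_s = y + \int_r^s \beta_u\,du + \int_r^s \eta_u\,dB_u, \qquad s\in[r,t],
\]
Doob's maximal inequality and the It\^o isometry applied to the stochastic integral give
\[
\E\Big[\sup_{r\le s\le t}|Y_s - y|\Big]\;\le\; \|\beta\|_\infty\, |t-r| + C\,\|\eta\|_\infty\sqrt{|t-r|}.
\]
Using $|t-r|\le T$, the drift contribution is absorbed into a multiple of $\sqrt{|t-r|}$, so in particular $\E|Y_t-y|\le C\sqrt{|t-r|}$ and hence $\E|X_t-Y_t|\le |x-y|+C\sqrt{|t-r|}$.

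Next I would compare the two processes on $[t,t+h]$. For $s\in[t,t+h]$,
\[
X_s - Y_s \;=\; (X_t-Y_t) + \int_t^s(\alpha_u-\beta_u)\,du + \int_t^s(\sigma_u-\eta_u)\,dB_u.
\]
Taking $\sup_{s\in[t,t+h]}$, then expectation, and applying Doob plus It\^o isometry to the martingale term, one obtains a bound of the form
\[
\E|X_t-Y_t| + \|\alpha-\beta\|_\infty\,h + C\,\|\sigma-\eta\|_\infty\sqrt{h}.
\]
Using $h\le T$ to write $h\le \sqrt{T}\sqrt{h}$ and recalling the definition of $M$, the previous step yields \eqref{eq:traiettorievicine}. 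The estimate \eqref{eq:stabilita} is then obtained by the very same argument applied just to $X$: $X_s-x=\int_t^s\alpha_u\,du+\int_t^s\sigma_u\,dB_u$ on $[t,t+h]$ is bounded in $L^1_\omega L^\infty_s$ by $\|\alpha\|_\infty h + C\|\sigma\|_\infty\sqrt{h}$, and a final use of $h\le T$ gives the $C\sqrt{h}$ bound.

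There is no substantial obstacle in this lemma: the content is a routine application of Doob and BDG to SDEs with bounded drift and diffusion coefficients. The only care required is bookkeeping, namely tracking which constants depend on the $L^\infty$ norms of $\alpha,\beta,\sigma,\eta$ (and enter the prefactor $C$) versus those that must come with the factor $M$ (and hence only appear on the difference terms $\alpha-\beta$ and $\sigma-\eta$). Once the equation for $X-Y$ is written, this separation is automatic.
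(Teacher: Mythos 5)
Your proposal is correct and follows essentially the same route as the paper: decompose the difference into the initial gap, the drift and martingale contributions on $[r,t]$ (bounded by $C\sqrt{|t-r|}$ via Cauchy--Schwarz/Doob and the It\^o isometry), and the difference terms on $[t,t+h]$ (bounded by $Mh+CM\sqrt h$), then absorb the linear-in-time terms using $h,|t-r|\le T$. The only cosmetic difference is that you prove \eqref{eq:stabilita} directly rather than deducing it from \eqref{eq:traiettorievicine} with the constant comparison process $Y_s\equiv x$, which is equally fine.
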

\begin{proof}
Without loss of generality, we suppose $t>r$. We start noticing that \eqref{eq:stabilita} is a directly consequence of \eqref{eq:traiettorievicine}. Actually, taking a process $(X_s)_s$ satisfying \eqref{eq:dinXY}, the constant process $Y_s=x$ satisfies \eqref{eq:dinXY} with $r=t$, $y=x$, $\beta_s=0$, $\eta_s=0$. Then \eqref{eq:traiettorievicine} tells us that
$$
\E\left[\sup\limits_{t\le s\le t+h} |X_s-x|\right]=	\E\left[\sup\limits_{t\le s\le t+h}|X_s-Y_s|\right]\le C\sqrt h\,.
$$
Hence, we only have to prove \eqref{eq:traiettorievicine}. Since $t>r$, we can write
$$
X_s-Y_s=x+\int_t^s\big[(\alpha_u-\beta_u)\,du+(\sigma_u-\eta_u)\,dB_u\big]-y-\int_r^t\big[\beta_u\,du+\eta_u\,dB_u\big]\,.
$$
Using the boundedness of $\alpha$ and $\beta$, we get for $s\in[t,t+h]$
$$
|X_s-Y_s|\le|x-y|+Mh+C(t-r)+\left|\int_t^s(\sigma_u-\eta_u)\,dB_u\right|+\left|\int_r^t\eta_u\,dB_u\right|\,,
$$
and so
\begin{equation}\label{eq:prebaldi}
\begin{split}
\E\left[\sup\limits_{t\le s\le t+h}|X_s-x|\right]&\le |x-y|+C(t-r)+Ch\\
&+\E\left[\sup\limits_{t\le s\le t+h}\left|\int_t^s(\sigma_u-\eta_u)\,dB_u\right|\right]+\E\left[\left|\int_r^t\eta_u\,dB_u\right|\right]\,.
\end{split}	
\end{equation}
The last two terms in the right-hand side are estimated using \emph{Proposition 8.5} and \emph{Proposition 8.6} of \cite{baldi}:
\begin{align*}
\E\left[\sup\limits_{t\le s\le t+h}\left|\int_t^s(\sigma_u-\eta_u)\,dB_u\right|\right]&\le \sqrt{\E\left[\sup\limits_{t\le s\le t+h}\left|\int_t^s(\sigma_u-\eta_u)\,dB_u\right|^2\right]}\\
&\le C\sqrt{\E\left[\int_t^{t+h}(\sigma_u-\eta_u)^2\,du\right]}\le CM\sqrt h\,.
\end{align*}
In an easier way we estimate the last term:
$$
\E\left[\left|\int_r^t\eta_u\,dB_u\right|\right]\le\sqrt{\E\left[\left|\int_r^t\eta_u\,dB_u\right|^2\right]}=\sqrt{\E\left[\int_r^t\eta_u^2\,du\right]}\le C\sqrt{t-r}\,,
$$
Plugging these estimates in \eqref{eq:prebaldi}, we obtain \eqref{eq:traiettorievicine}\,.
\end{proof}

Now we are able to prove the following theorem.
\begin{thm}\label{thm:verification}
Suppose that $L^1$, $L^2$, $F$ and $G$ are Lipschitz continuous in the space variable and globally bounded. Furthermore, suppose that $L^1$ and $L^2$ are continuous in time and space, and that the function $(t,x)\to F(t,x,m(t))$ is continuous in time and space.

Then $u$ is a viscosity solution of the Hamilton-Jacobi equation
\begin{equation}\label{hjb}
\begin{cases}
u_t(t,x)+H^1(t,x,\nabla u(t,x))+H^2(t,x,\Delta u(t,x))+F(t,x,m(t))=0\,,\\
u(T,x)=G(x,m(T))\,,
\end{cases}
\end{equation}
where
$$
H^1(t,x,p):=\inf\limits_{\alpha\in\mathcal U}\{\langle p,\alpha\rangle + L^1(t,x,\alpha)\}
$$
and
\begin{equation}\label{eq:L2}
H^2(t,x,q):=\inf\limits_{\sigma\in\mathcal S}\left\{\miezz\sigma^2q+L^2(t,x,\sigma)\right\}
\end{equation}
are the so called Hamiltonians.
\end{thm}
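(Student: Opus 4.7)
The plan is to combine the dynamic programming principle \eqref{eq:dpp} with It\^o's formula, in the standard fashion. Before addressing the viscosity inequalities I would first check that $u$ is continuous on $[0,T]\times\R^d$: given $(t,x),(r,y)$, taking an $\eps$-optimal control for one value function and plugging it into the process started at the other base point, and combining the sup-estimate \eqref{eq:traiettorievicine} of Lemma \ref{lem:handy} (applied with the same controls on both processes, so that $M=0$) with the Lipschitz-in-$x$ and global boundedness of $L^1,L^2,F,G$ and the continuity in $t$, one obtains $|u(t,x)-u(r,y)|\le C(|x-y|+\sqrt{|t-r|})$. The terminal condition $u(T,x)=G(x,m(T))$ is immediate. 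Now fix $\varphi\in\mathcal C^{1,2}$ and $(t_0,x_0)\in[0,T)\times\R^d$ with $u(t_0,x_0)=\varphi(t_0,x_0)$, the touching condition being valid on a cylinder $Q_\rho=[t_0,t_0+\rho]\times B_\rho(x_0)$, and set $\tau:=(t_0+h)\wedge\tau_\rho$ with $\tau_\rho:=\inf\{s>t_0:X_s\notin B_\rho(x_0)\}$.

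\emph{Local-maximum case} ($u\le\varphi$ on $Q_\rho$). For constant controls $(\alpha,\sigma)\in\mathcal U\times\mathcal S$, a stopping-time version of \eqref{eq:dpp} gives
\[
\varphi(t_0,x_0)\le\E\!\left[\int_{t_0}^\tau\bigl(L^1(s,X_s,\alpha)+L^2(s,X_s,\sigma)+F(s,X_s,m(s))\bigr)\,ds+u(\tau,X_\tau)\right].
\]
Since $(s,X_s)\in Q_\rho$ for $s\in[t_0,\tau]$, we may use $u(\tau,X_\tau)\le\varphi(\tau,X_\tau)$; applying It\^o's formula to $\varphi(s,X_s)$ up to $\tau$ (the stochastic integral having zero expectation) rewrites the inequality as
\[
0\le\E\!\left[\int_{t_0}^\tau\bigl(\varphi_t+\alpha\cdot\nabla\varphi+\miezz\sigma^2\Delta\varphi+L^1(s,X_s,\alpha)+L^2(s,X_s,\sigma)+F\bigr)\,ds\right].
\]
Dividing by $h$ and letting $h\to 0^+$ (using \eqref{eq:stabilita} to ensure $\E[\tau-t_0]/h\to 1$ and to pass the average of the integrand to its value at $(t_0,x_0)$) yields, for every $(\alpha,\sigma)$,
\[
0\le\varphi_t+\alpha\cdot\nabla\varphi+\miezz\sigma^2\Delta\varphi+L^1(t_0,x_0,\alpha)+L^2(t_0,x_0,\sigma)+F(t_0,x_0,m(t_0));
\]
the infimum in $(\alpha,\sigma)$ then gives the first viscosity inequality $\varphi_t+H^1+H^2+F\ge 0$ at $(t_0,x_0)$.

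\emph{Local-minimum case} ($u\ge\varphi$ on $Q_\rho$). For every $\eps>0$, DPP provides a (possibly random) control $(\alpha^\eps_\cdot,\sigma^\eps_\cdot)$ with
\[
\E\!\left[\int_{t_0}^\tau(L^1+L^2+F)\,ds+u(\tau,X^\eps_\tau)\right]\le\varphi(t_0,x_0)+\eps h\,.
\]
Since $(\tau,X^\eps_\tau)\in\overline{Q_\rho}$ we have $u(\tau,X^\eps_\tau)\ge\varphi(\tau,X^\eps_\tau)$, so substituting $\varphi$ for $u$ on the LHS only decreases it; applying It\^o and using the pointwise lower bounds $\alpha\cdot\nabla\varphi+L^1(\cdot,\alpha)\ge H^1(\cdot,\nabla\varphi)$ and $\miezz\sigma^2\Delta\varphi+L^2(\cdot,\sigma)\ge H^2(\cdot,\Delta\varphi)$, one gets
\[
\frac{1}{h}\E\!\left[\int_{t_0}^\tau\bigl(\varphi_t+H^1(s,X_s,\nabla\varphi)+H^2(s,X_s,\Delta\varphi)+F\bigr)\,ds\right]\le\eps\,.
\]
Letting $h\to 0^+$ and then $\eps\to 0^+$ gives the second viscosity inequality $\varphi_t+H^1+H^2+F\le 0$ at $(t_0,x_0)$, which together with the previous one shows that $u$ solves \eqref{hjb} in the viscosity sense.

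The main technical point I expect is the use of the stopping-time version of \eqref{eq:dpp}, which localizes the trajectory to $Q_\rho$ and makes the substitution of $u$ by $\varphi$ exact on the random interval $[t_0,\tau]$, bypassing a separate control of the bad event $\{\tau_\rho<t_0+h\}$. This extension from the deterministic-time DPP stated in \eqref{eq:dpp} is standard for bounded controls; once it is in place, the remaining passage to the limit relies only on Lemma \ref{lem:handy} and the continuity of the data.
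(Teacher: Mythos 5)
Your proposal is correct and shares the paper's overall strategy (continuity of $u$ via Lemma \ref{lem:handy}, then dynamic programming plus It\^o's formula, with constant controls for the subsolution-type inequality and $\eps$-optimal controls for the other), but it diverges in two substantive technical choices. First, you localize: you touch $u$ with the test function only on a cylinder $Q_\rho$ and invoke a stopping-time version of the DPP with horizon $\tau=(t_0+h)\wedge\tau_\rho$. The paper instead uses test functions dominating (resp.\ dominated by) $u$ on all of $[0,T]\times\R^d$, so the plain deterministic-horizon principle \eqref{eq:dpp} suffices and no exit-time analysis is needed; your route therefore rests on a strengthening of \eqref{eq:dpp} that the paper never states --- it is standard for bounded controls (cf.\ \cite{soner}), but you are right to flag it explicitly, since \eqref{eq:dpp} as written does not cover random horizons. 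Second, in the supersolution half the paper freezes the space variable: it replaces $X_s^h$ by $x$ in all running terms, paying an error $Ch\sqrt h$ through the Lipschitz hypotheses and \eqref{eq:stabilita}, and only then passes to the Hamiltonians at the fixed point $x$ and uses their time continuity; you instead bound the running cost from below by $H^1(s,X_s,\nabla\varphi(s,X_s))+H^2(s,X_s,\Delta\varphi(s,X_s))$ along the trajectory and conclude by continuity of $H^1,H^2$ together with $\E[\tau-t_0]/h\to1$. Your variant is slightly more economical at that step (it needs only continuity of the data rather than the quantitative Lipschitz freezing) and works directly with the standard local definition of viscosity solution, whereas the paper's variant keeps every ingredient inside the stated Lemma \ref{lem:handy} and the deterministic DPP, at the cost of using globally touching test functions. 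Both limit passages are sound; your appeal to \eqref{eq:stabilita} and Markov's inequality to control $\P(\tau_\rho<t_0+h)$ is exactly what is needed to justify $\E[\tau-t_0]/h\to1$.
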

We give an example of a function $F$ satisfying the previous assumptions. Let $\mathcal P(\R^d)$ be the space of Borel probability measures with finite first order moment, equipped with the Wasserstein distance $\mathbf{d}_1$ defined in the following way: for $m_1\,,m_2\in\mathcal{P}(\R^d)$
\begin{equation}\label{def:wass}
\dw(m_1,m_2):=\sup\limits_{Lip(\phi)= 1}\int_{\R^d} \phi(x)d(m_1-m_2)(x)\,.
\end{equation}
If $m:[0,T]\to\mathcal{P}(\R^d)$ is continuous, then a function $F:[0,T]\times\R^d\times\mathcal{P}(\R^d)\to\R\,$, continuous in all variables and Lipschitz in space variable, satisfies the required assumptions.
\begin{proof}
\emph{Step 1: Continuity of $u$.} The first step consists to check the continuity of the value function $u$ in both variables.

Let $(t_n,x_n)\to(t,x)$. Using the definition of $u$, for each $\eps>0$ we considers controls $\alpha_s^{\eps,n}$ and $\sigma_s^{\eps,n}$ such that
\begin{align*}
\E\left[\int_{t_n}^T\Big(L^1(s,X_s^{\eps,n},\alpha_s^{\eps,n})+L^2(s,X_s^{\eps,n},\sigma_s^{\eps,n})+F(s,X_s^{\eps,n},m(s))\Big)\,ds+G(X_T^{\eps,n},m(T))\right]\\\le u(t_n,x_n)+\eps\,,
\end{align*}
where $X_s^{\eps,n}$ is the process related to the controls $\alpha_s^{\eps,n}$ and $\sigma_s^{\eps,n}$, with $X_s^{\eps,n}=x_n$ for $s<t_n$.

We take the process $(X_s)_s$ defined $X_s=x$ for $s<t$ and satisfying for $s\ge t$
$$
\begin{cases}
dX_s=\alpha_s^{\eps,n}\,ds+\sigma_s^{\eps,n}\,dB_s\,,\\
X_t=x\,.
\end{cases}
$$
Then we have, using the hypotheses on the cost functions and the Lagrangians,
\begin{align*}
u(t,x)&\le \E\left[\int_t^T\Big(L^1(s,X_s,\alpha_s^{\eps,n})+L^2(s,X_s,\sigma_s^{\eps,n})+F(s,X_s,m(s))\Big)\,ds+G(X_T,m(T))\right]\\
&\le\E\left[\int_{t_n}^T\Big(L^1(s,X_s,\alpha_s^{\eps,n})+L^2(s,X_s,\sigma_s^{\eps,n})+F(s,X_s,m(s))\Big)\,ds+G(X_T,m(T))\right]\\
&+C|t_n-t|\le C\,\E\left[\sup\limits_{t\in[0,T]}|X_s-X_s^{\eps,n}|\right]+C|t_n-t|+u(t_n,x_n)+\eps\\
&\le C\left(|x_n-x|+\sqrt{|t_n-t|}\right)+u(t_n,x_n)+\eps\,,
\end{align*}
where we used in the last step \eqref{eq:traiettorievicine} with $M=0$.

So, passing to the $\liminf$ when $(t_n,x_n)\to(t,x)$, we obtain
$$
u(t,x)\le \liminf\limits_{(t_n,x_n)\to(t,x)} u(t_n,x_n)+\eps\,,
$$
which gives, for the arbitrariness of $\eps$,
$$
u(t,x)\le\liminf\limits_{(t_n,x_n)\to(t,x)}u(t_n,x_n)\,.
$$
In the same way, we prove $u(t,x)\ge\limsup\limits_{(t_n,x_n)\to(t,x)} u(t_n,x_n)$, which finally gives the continuity of $u$.

\emph{Step 2: Subsolution argument}. Let $\varphi\in\mathcal C^\infty([0,T]\times\R^d)$, with all derivatives bounded, a test function with
$$
\varphi(\bt,\bx)\ge u(\bt,\bx)\,,\qquad\forall(\bt,\bx)\in[0,T]\times\R^d
$$
and a minimum value at the point $(t,x)$, where $\varphi(t,x)=u(t,x)$. The dynamic programming principle \eqref{eq:dpp} gives
\begin{small}
\begin{equation}\label{eq:subdpp}
\begin{split}
0&=\inf\limits_{\substack{\alpha_\cdot\in\mathcal A_t^{\mathcal U}\\\sigma_\cdot\in\mathcal A_t^{\mathcal S}}}\E\left[\int_t^{t+h}\Big(L^1(s,X_s,\alpha_s)+L^2(s,X_s,\sigma_s)+F(s,X_s,m(s))\Big)\,ds+u(t+h,X_{t+h})-u(t,x)\right]\\
&\le\inf\limits_{\substack{\alpha_\cdot\in\mathcal A_t^{\mathcal U}\\\sigma_\cdot\in\mathcal A_t^{\mathcal S}}}\E\left[\int_t^{t+h}\Big(L^1(s,X_s,\alpha_s)+L^2(s,X_s,\sigma_s)+F(s,X_s,m(s))\Big)\,ds+\varphi(t+h,X_{t+h})-\varphi(t,x)\right].
\end{split}
\end{equation}
\end{small}
We now use Ito's formula on $\varphi$ and obtain, for any admissible control $(\alpha_s,\sigma_s)$,
$$
\E\big[\varphi(t+h,X_{t+h})-\varphi(t,x)\big]=\E\left[\int_t^{t+h}\left(\varphi_t+\langle\nabla\varphi,\alpha_s\rangle+\miezz\sigma_s^2\Delta\varphi\right)(s,X_s)\,ds\right]\,.
$$
This expression, combined with \eqref{eq:subdpp}, gives us, for each control $\alpha_\cdot$ and $\sigma_\cdot$,
\begin{align*}
0&\le \E\left[\int_t^{t+h}\left\{\varphi_t(s,X_s)+\langle\nabla\varphi(s,X_s),\alpha_s\rangle+\frac{\sigma_s^2}2\Delta\varphi(s,X_s)\right\}ds\right]\\
&+\E\left[\int_t^{t+h}\Big\{L^1(s,X_s,\alpha_s)+L^2(s,X_s,\sigma_s)+F(s,X_s,m(s))\Big\}\,ds\right]\,.
\end{align*}
Now it is necessary to write down our PDE as a purely deterministic expression. For this, we consider for now only a subset of the control spaces $\mathcal A_t^{\mathcal U}$ and $\mathcal A_t^{\mathcal S}$, namely the set of constant controls. This controls are denoted by $\alpha$ and $\sigma$ instead of $\alpha_s$ and $\sigma_s$, taking values in $\mathcal U\subset\R^d$ and $\mathcal S\subset\R$. Our inequality now reads
\begin{align*}
0&\le \E\left[\int_t^{t+h}\left\{\varphi_t(s,X_s)+\langle\nabla\varphi(s,X_s),\alpha\rangle+\frac{\sigma^2}2\Delta\varphi(s,X_s)\right\}ds\right]\\
&+\E\left[\int_t^{t+h}\Big\{L^1(s,X_s,\alpha)+L^2(s,X_s,\sigma)+F(s,X_s,m(s))\Big\}\,ds\right]\,.
\end{align*}
Now we divide by $h$ and we let $h\to 0$. Using the $a.s.$ time continuity of the trajectories $X_\cdot(\omega)$ and the continuity of the cost functions  and the Lagrangians, we obtain
$$
0\le\varphi_t(t,x)+\langle \nabla\varphi(t,x),\alpha\rangle+\frac{\sigma^2}2\Delta\varphi(t,x)+L^1(t,x,\alpha)+L^2(t,x,\sigma)+F(t,x,m(t))\,.
$$
Passing to the $\inf$ when $\alpha\in\mathcal U$ and $\sigma\in\mathcal S$, we get
$$
0\ge-\varphi_t(t,x)-H^1(t,x,\nabla\varphi(t,x))-H^2(t,x,\Delta\varphi(t,x))-F(t,x,m(t,x))
$$
for all test functions $\varphi$, which proves that $u(t,x)$ is indeed a viscosity subsolution for our PDE.

\emph{Step 3: Supersolution argument}. Now we take $\phi\in\mathcal C_b^\infty([0,T]\times\R^d)$, a test function with
$$
\phi(\bt,\bx)\le u(\bt,\bx)\,,\qquad\forall (\bt,\bx)\in[0,T]\times\R^d
$$
and a maximum value at the point $(t,x)$, where $\phi(t,x)=u(t,x)$. As before, the dynamic programming principle \eqref{eq:dpp} gives
$$
0\ge\inf\limits_{\substack{\alpha_\cdot\in\mathcal A_t^{\mathcal U}\\\sigma_\cdot\in\mathcal A_t^{\mathcal S}}}\E\left[\int_t^{t+h}\Big(L^1(s,X_s,\alpha_s)+L^2(s,X_s,\sigma_s)+F(s,X_s,m(s))\Big)\,ds+\phi(t+h,X_{t+h})-\phi(t,x)\right].
$$
Using the definition of $\phi$, we take controls $\alpha_s^h$, $\sigma_s^h$ and the related process $X_s^h$ such that
$$
E\left[\int_t^{t+h}\Big(L^1(s,X_s^h,\alpha_s^h)+L^2(s,X_s^h,\sigma_s^h)+F(s,X_s^h,m(s))\Big)\,ds+\phi(t+h,X_{t+h}^h)-\phi(t,x)\right]\le h^2\,.
$$
Applying Ito's formula we obtain
\begin{align*}
&\E\left[\int_t^{t+h}\left\{\phi_t(s,X_s^h)+\langle\nabla\phi(s,X_s^h),\alpha_s^h\rangle+\frac{{(\sigma_s^h)}^2}2\Delta\phi(s,X_s^h)\right\}ds\right]\\
+&\E\left[\int_t^{t+h}\Big\{L^1(s,X_s^h,\alpha_s^h)+L^2(s,X_s^h,\sigma_s^h)+F(s,X_s^h,m(s))\Big\}\,ds\right]\le h^2\,.
\end{align*}
We estimate all the terms in the same way, using the Lipschitz continuity of the cost functions with respect to $x$ and Lemma \ref{lem:handy}. For instance, for the $L^1$ term we get
\begin{align*}
&\E\left[\int_t^{t+h}L^1(s,X_s^h,\alpha_s^h)\,ds\right]\ge\E\left[\int_t^{t+h}\Big(L^1(s,x,\alpha_s^h)-|X_s^h-x|\Big)\,ds\right]\\
\ge&\E\left[\int_t^{t+h} L^1(s,x,\alpha_s^h)\,ds\right]-h\,\E\left[\sup\limits_{t\le s\le t+h}|X_s^h-x|\right]\ge\E\left[\int_t^{t+h}L^1(s,x,\alpha_s^h)\right]-Ch\sqrt h\,.
\end{align*}
Hence, with all these estimates we get
\begin{align*}
&\E\left[\int_t^{t+h}\left\{\phi_t(s,x)+\langle\nabla\phi(s,x),\alpha_s^h\rangle+\frac{{(\sigma_s^h)}^2}2\Delta\phi(s,x)\right\}ds\right]\\
+&\E\left[\int_t^{t+h}\Big\{L^1(s,x,\alpha_s^h)+L^2(s,x,\sigma_s^h)+F(s,x,m(s))\Big\}\,ds\right]\le h^2+Ch\sqrt h\le Ch\sqrt h\,.
\end{align*}
Using the definition of $H^1$ and $H^2$, we obtain
$$
\E\left[\int_t^{t+h}\!\!\Big\{\phi_t(s,x)+H^1(s,x,\nabla\phi(s,x))+H^2(s,x,\Delta\phi(s,x))+F(s,x,m(s))\Big\}\,ds\right]\le Ch\sqrt h\,.
$$
Now we divide by $h$ and we use the continuity of $H^1$ and $H^2$ in time variable, which is an immediate consequence of the time continuity of $L^1$ and $L^2$. Eventually, we obtain
$$
-\phi_t(t,x)-H^1(t,x,\nabla\phi)-H^2(t,x,\Delta\phi(t,x))-F(t,x,m(t,x))\ge0\,,
$$
which proves that $u$ is a supersolution of the HJB equation and concludes the theorem.
\end{proof}
\begin{rem}\label{rem}
We observe that, up to defining the set $\mathcal S':=\left\{\frac{\sigma^2}2\,\Big|\,\sigma\in\mathcal S\right\}$, the function $H^2$ can be rewritten as
$$
H^2(t,x,q):=\inf\limits_{\eta\in\mathcal{S'}}\big\{\eta q+L_3(t,x,\eta)\big\}\,,
$$
where $L_3(t,x,\eta)=L^2(t,x,\sqrt{2\eta})$.

Henceforth, when we will talk about this simplified model case, we will refer to
\begin{equation}\label{eq:L3}
\begin{split}
H^1(t,x,p)&=\inf\limits_{\alpha\in\mathcal U}\big\{\langle p,\alpha\rangle + L^1(t,x,\alpha)\big\}\,,\\
H^2(t,x,q)&=\inf\limits_{\eta\in\mathcal{S'}}\big\{\eta q+L_3(t,x,\eta)\big\}\,.
\end{split}
\end{equation}
\end{rem}

\section{The Hamilton-Jacobi Equation}\label{sec3}

This Section is completely devoted to the study of the following equation:
$$
\left\{
\begin{array}{lr}
u_t+H^2(t,x,\Delta u)+H^1(t,x,\nabla u)=-F(t,x)\,, &(t,x)\in[0,T]\times\R^d\,,\\
u(T,x)=G(x)\,,& x\in\R^d\,,
\end{array}
\right.
$$
where
$$
H^1(t,x,p):=\inf\limits_{\alpha\in\mathcal U}\big\{ \langle p,b(x,\alpha)\rangle + L^1(t,x,\alpha) \big\}
$$
and
$$
H^2(t,x,q):=\inf\limits_{\sigma\in\mathcal S}\left\{ \frac{\Gamma(x,\sigma)^2}2q + L^2(t,x,\sigma) \right\}\,.
$$
Since there is no dependence on $m$ in this step, we can omit $F$ in the previous equation, including it into $H^1$ (up to changing the Lagrangian $L^1$),. Hence, the equation we want to study is the following:
\begin{equation}\label{eq:hjb}
\begin{cases}
u_t+H^2(t,x,\Delta u)+H^1(t,x,\nabla u)=0\,, \qquad(t,x)\in[0,T]\times\R^d\,,\\
u(T,x)=G(x)\,,\hspace{6.75cm} x\in\R^d\,.
\end{cases}
\end{equation}

As already said, the results obtained in this section are readaptation of the results obtained in the elliptic case in \cite{crandall,crandall2} and in the literature of viscosity results. We decide to include them for the sake of completeness, since, as far as we know, there are no results directly applicable to \eqref{eq:hjb}.\\

We note that, with the change of variable $v(t,x)=e^{-\lambda(T-t)}u(t,x)$, the system \eqref{eq:hjb} is equivalent to the following one:
\begin{equation}\label{eq:hjb2}
\begin{cases}
-u_t-H^2_\lambda(t,x,\Delta u)-H^1_\lambda(t,x,\nabla u)+\lambda u=0\,, \qquad(t,x)\in[0,T]\times\R^d\,,\\
u(T,x)=G(x)\,,\hspace{8.1cm} x\in\R^d\,,
\end{cases}
\end{equation}
where
$$
\begin{array}{l}
H^2_\lambda(t,x,q)=e^{-\lambda (T-t)}H^2(t,x,e^{\lambda(T-t)}q)=\inf\limits_{\sigma\in\mathcal S}\left\{ \frac{\Gamma(x,\sigma)^2}2q + e^{-\lambda(T-t)}L^2(t,x,\sigma) \right\}\,,\\
H^1_\lambda(t,x,p)=e^{-\lambda (T-t)}H^1(t,x,e^{\lambda(T-t)}p)=\inf\limits_{\alpha\in\mathcal U}\Big\{ \langle p,b(x,\alpha)\rangle + e^{-\lambda(T-t)}L^1(t,x,\alpha) \Big\}\,,
\end{array}
$$
for each $\lambda>0$.\\

So, in order to prove existence, uniqueness and regularity of solutions, we will work with \eqref{eq:hjb2}. For a better readability, we will write $H^1$ and $H^2$ instead of $H^1_\lambda$ and $H^2_\lambda$, up to changing again the Lagrangian functions $L^1$ and $L^2$.

Due to the non-linearity of the second-order term, we need to work in viscosity sense. First, for each $u:(0,T)\times\R^d\to\R$ we define the following sets:
\begin{defn}
We denote with $\mathcal P^{2,+} u(t_0,x_0)$ the set of all points $(a,p,X)\in\R\times\R^d\times Sym(d,d)$ such that for $(t,x)\to(t_0,x_0)$ we have
$$
u(t,x)\le u(t_0,x_0)+a(t-t_0)+\langle p,x-x_0\rangle+\miezz \big\langle X(x-x_0),x-x_0\big\rangle + o\big(|t-t_0|+|x-x_0|^2\big)\,.
$$
Similary, we define $\mathcal P^{2,-} u(t_0,x_0)=-\mathcal P^{2,+}(-u)(t_0,x_0)$, i.e. the set of all points $(a,p,X)\in\R\times\R^d\times Sym(d,d)$ such that for $(t,x)\to(t_0,x_0)$ we have
$$
u(t,x)\ge u(t_0,x_0)+a(t-t_0)+\langle p,x-x_0\rangle+\miezz \big\langle X(x-x_0),x-x_0\big\rangle + o\big(|t-t_0|+|x-x_0|^2\big)\,.
$$
\end{defn}
Now we give a suitable definition of solution.
\begin{defn}
We say that a function $u\in USC((0,T]\times\R^d)$ and bounded from above is a subsolution of \eqref{eq:hjb2} if $\forall(t,x)\in(0,T)\times\R^d$ and $(a,p,X)\in\mathcal P^{2,+}u(t,x)$ we have
$$
-a-H^2(t,x,\mathrm{tr}(X))-H^1(t,x,p)+\lambda u(t,x)\le0\,;
$$
moreover, $\forall x\in\R^d$ we require $u(T,x)\le G(x)$.

Similarly, we say that a function $u\in LSC((0,T]\times\R^d)$ and bounded from below is a supersolution of \eqref{eq:hjb2} if $\forall(t,x)\in(0,T)\times\R^d$ and $(a,p,X)\in\mathcal P^{2,-}u(t,x)$ we have
$$
-a-H^2(t,x,\mathrm{tr}(X))-H^1(t,x,p)+\lambda u(t,x)\ge0\,;
$$
moreover, $\forall x\in\R^d$ we require $u(T,x)\le G(x)$.

Finally, we say that a bounded continuous function $u$ is a solution of \eqref{eq:hjb2} if it is both a subsolution and supersolution.
\end{defn}

The first step is to prove a comparison principle for \eqref{eq:hjb2}, and for that the new term $\lambda u$ and the boundedness hypotheses play an essential role.

In order to prove it, we also need the following proposition (Theorem 8.3 of \cite{crandall2}), which will be also useful in the rest of the section.
\begin{prop}\label{prop:importante}
Let $u_1,\dots,u_n$ subsolutions of \eqref{eq:hjb2}.

Consider a function $\phi:(0,T)\times\R^{nd}\to\R$, once continuously differentiable in $t$ and twice continuously differentiable in $(x_1,\dots,x_n)$.

Suppose that
$$
u_1(t,x_1)+\ldots+u_n(t,x_n)-\phi(t,x_1,\dots,x_n)
$$
achieves his maximum in $(\tilde{t},\tilde{x_1},\dots,\tilde{x_n})$. Then there exist $a_i\in\R$, $X_i\in Sym(d,d)$, $i=1,\ldots,n$ such that, defining $A=D^2_x\phi(\tilde{t},\tilde{x_1},\dots,\tilde{x_n})$, one has
$$
\begin{array}{cc}
(a_i,\nabla_{x_i}\phi(\tilde{t},\tilde{x_1},\ldots,\tilde{x_n}),X_i)\in\overline{\mathcal P^{2,+}u_i(\tilde{t},\tilde{x_i})}\,, & \sum\limits_i a_i=\phi_t(\tilde{t},\tilde{x_1},\ldots,\tilde{x_n})\,,\\
\begin{pmatrix}
X_1 & \dots & 0\\
\vdots & \ddots & \vdots\\
0 & \dots & X_N
\end{pmatrix}\le A\,.
\end{array}
$$
\end{prop}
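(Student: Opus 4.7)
The statement is the parabolic ``theorem of sums'' (Crandall--Ishii lemma for $n$ functions) specialised to our setting, so the most honest plan is to invoke Theorem~8.3 of \cite{crandall2} directly. If I were to reproduce its proof, I would follow the standard three-step scheme: first regularise each $u_i$ by a parabolic sup-convolution, then apply Alexandrov's theorem combined with Jensen's lemma to get genuine first- and second-order Taylor expansions at points of twice-differentiability, and finally pass to the limit using the stability of the closed parabolic superjet $\overline{\mathcal P^{2,+}}$.

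In more detail, the first step replaces $u_i$ by
\[
u_i^\varepsilon(t,x):=\sup_{(s,y)}\Big\{u_i(s,y)-\tfrac{1}{\varepsilon}\big(|x-y|^2+(t-s)^2\big)\Big\},
\]
which is upper-semicontinuous, semiconvex in $x$, decreases pointwise to $u_i$ as $\varepsilon\to 0$, and whose parabolic superjets correspond to elements of $\overline{\mathcal P^{2,+}u_i}$ at nearby points. The second step observes that $\Psi^\varepsilon(t,x_1,\dots,x_n):=\sum_i u_i^\varepsilon(t,x_i)-\phi(t,x_1,\dots,x_n)$ still has an approximate maximum at $(\tilde t,\tilde x_1,\dots,\tilde x_n)$; Jensen's lemma supplies an affine perturbation of $\Psi^\varepsilon$ whose strict maximum lies at a point where each $u_i^\varepsilon$ is twice differentiable in $x$ (Alexandrov) and admits a time derivative. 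The first- and second-order necessary conditions at that maximum then yield scalars $a_i^\varepsilon$ and symmetric matrices $X_i^\varepsilon$ satisfying
\[
\sum_i a_i^\varepsilon=\phi_t,\qquad \nabla u_i^\varepsilon=\nabla_{x_i}\phi,\qquad\begin{pmatrix}X_1^\varepsilon & & \\ & \ddots & \\ & & X_n^\varepsilon\end{pmatrix}\le D^2_x\phi.
\]
In the third step, a subsequence of $(a_i^\varepsilon,p_i^\varepsilon,X_i^\varepsilon)$ converges, and stability of $\overline{\mathcal P^{2,+}u_i}$ under passage to limits, together with continuity of $\phi$ and its derivatives, delivers the desired $(a_i,\nabla_{x_i}\phi,X_i)\in\overline{\mathcal P^{2,+}u_i(\tilde t,\tilde x_i)}$ and preserves the block inequality.

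The main obstacle is the second step: producing the sharp block inequality $\mathrm{diag}(X_i)\le A$, rather than the weaker $\le A+\varepsilon A^2$ one gets by naive differentiation, is the technical heart of the Crandall--Ishii lemma and relies on Jensen's lemma combined with a doubling-of-variables estimate carried out carefully in \cite{crandall2}. Everything else---stability of semijets, first-order conditions at an interior maximum, and the $\varepsilon\to 0$ limit---is standard and would fit in a few lines, so the only real work is to reference or reprove that one sharp matrix estimate.
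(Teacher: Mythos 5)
Your proposal matches the paper exactly: the authors also present this proposition as Theorem~8.3 of \cite{crandall2} and invoke it without reproving it, so citing the parabolic theorem of sums directly is precisely the intended route. The sup-convolution / Jensen--Alexandrov sketch you give is the standard argument behind that theorem; the only cosmetic point is that the reference actually yields the block bound $\le A+\varepsilon A^{2}$ for each $\varepsilon>0$ rather than the sharp $\le A$ (the paper's statement also elides this), which costs nothing in the subsequent applications since one may take $\varepsilon$ arbitrarily small.
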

Now we are ready to prove the comparison principle, by adapting the proof given by Crandall, Ishii and Lions in \cite{crandall}.
\begin{thm}\label{thm:comparison}
Suppose \eqref{eq:bG} is satisfied. Moreover, suppose that
\begin{itemize}
\item $L^1$ continuous in all variables and H\"older in $x$, uniformly in $t\in[0,T]$ and locally uniformly in $p$, i.e. $\forall |p|\le L$ $\exists K_L$ such that
$$
|L^1(t,x,p)-L^1(t,y,p)|\le K_L|x-y|^\beta\,,\qquad \mbox{for a certain }0<\beta\le 1\,,\quad\forall\,t\in[0,T]\,.
$$
The same hypotheses hold for $L^2$;
\item $\Gamma$, $b$ and $G$ are Lipschitz in the space variable.
\end{itemize}
Then the comparison principle holds for equation \eqref{eq:hjb2}, i.e., if $u$ and $v$ are respectively a subsolution and a supersolution of \eqref{eq:hjb2}, then $u(t,x)\le v(t,x)$ for all $(t,x)\in(0,T]\times\R^d$\,.
\end{thm}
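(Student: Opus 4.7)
I would argue by contradiction, assuming $M := \sup_{(0,T]\times\R^d}(u-v) > 0$. To compactify in $x$ and push the maximizer into the interior in $t$, I double the space variable and add growth and time penalties:
\begin{equation*}
\Phi_{\varepsilon,\delta,\eta}(t,x,y) := u(t,x) - v(t,y) - \frac{|x-y|^2}{2\varepsilon} - \delta\bigl(\langle x\rangle + \langle y\rangle\bigr) - \frac{\eta}{t},
\end{equation*}
where $\langle z\rangle := (1+|z|^2)^{1/2}$. Boundedness of $u,v$ makes $\Phi$ attain its supremum at some $(\bar t,\bar x,\bar y)$. The Lipschitz continuity of $G$, combined with the terminal condition $u(T,\cdot)\le G\le v(T,\cdot)$, gives $\Phi(T,x,y)\le \mathrm{Lip}(G)^2\varepsilon/2$, so $M>0$ together with a suitable choice of parameters forces $\bar t<T$, while the $\eta/t$ term forces $\bar t>0$. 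Standard penalty-method convergence yields $|\bar x-\bar y|^2/\varepsilon\to 0$ and $\delta\langle\bar x\rangle,\delta\langle\bar y\rangle\to 0$ in the limits.

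\textbf{Extracting the jets.} I would then apply Proposition \ref{prop:importante} with $u_1=u$ and $u_2=-v$ (which is USC since $v$ is LSC) against the test function $\phi(t,x,y) = \frac{|x-y|^2}{2\varepsilon} + \delta(\langle x\rangle+\langle y\rangle) + \eta/t$. This produces $(a_1,p_1,X)\in\overline{\mathcal P^{2,+}}u(\bar t,\bar x)$ and $(a_2,p_2,Y)\in\overline{\mathcal P^{2,-}}v(\bar t,\bar y)$ with
\begin{equation*}
p_1 = \frac{\bar x-\bar y}{\varepsilon} + \delta\nabla\langle\bar x\rangle,\quad p_2 = \frac{\bar x-\bar y}{\varepsilon} - \delta\nabla\langle\bar y\rangle,\quad a_1 - a_2 = \frac{\eta}{\bar t^{\,2}},
\end{equation*}
together with the matrix inequality $\begin{pmatrix} X & 0 \\ 0 & -Y\end{pmatrix}\le \varepsilon^{-1}\begin{pmatrix} I & -I \\ -I & I\end{pmatrix} + C\delta\,I_{2d}$. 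Writing the viscosity sub- and supersolution inequalities at these jets and subtracting gives
\begin{equation*}
\lambda\bigl(u(\bar t,\bar x)-v(\bar t,\bar y)\bigr) + \frac{\eta}{\bar t^{\,2}} \le \Delta H^2 + \Delta H^1,
\end{equation*}
where $\Delta H^2 := H^2(\bar t,\bar x,\mathrm{tr}\,X)-H^2(\bar t,\bar y,\mathrm{tr}\,Y)$ and $\Delta H^1 := H^1(\bar t,\bar x,p_1)-H^1(\bar t,\bar y,p_2)$.

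\textbf{The main obstacle: bounding $\Delta H^2$.} The crux is to control $\Delta H^2$ even though $|X|,|Y|=O(1/\varepsilon)$. For fixed $\mu>0$ pick a $\mu$-approximate minimizer $\sigma^*\in\mathcal S$ in the definition of $H^2(\bar t,\bar y,\mathrm{tr}\,Y)$. Testing the $2d\times 2d$ matrix inequality against the $d$ pairs $\bigl(\Gamma(\bar x,\sigma^*)e_i,\,\Gamma(\bar y,\sigma^*)e_i\bigr)$ and summing over $i$ yields the classical structural estimate
\begin{equation*}
\tfrac12\Gamma(\bar x,\sigma^*)^2\,\mathrm{tr}\,X - \tfrac12\Gamma(\bar y,\sigma^*)^2\,\mathrm{tr}\,Y \le \frac{d\,\mathrm{Lip}(\Gamma)^2\,|\bar x-\bar y|^2}{2\varepsilon} + C\delta,
\end{equation*}
and combining with the H\"older continuity of $L^2$ in $x$ gives $\Delta H^2 \le C|\bar x-\bar y|^2/\varepsilon + K|\bar x-\bar y|^\beta + C\delta + \mu$. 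An entirely analogous and simpler bound on $\Delta H^1$ uses the Lipschitz dependence of $b$ in $x$, the H\"older dependence of $L^1$ in $x$, $|p_1-p_2|\le C\delta$, and the boundedness of $b$, to give $\Delta H^1 \le C|\bar x-\bar y|^2/\varepsilon + K|\bar x-\bar y|^\beta + C\delta$.

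\textbf{Conclusion.} Letting first $\mu\to 0$ and then $\varepsilon\to 0$ with $\delta,\eta$ fixed, every error term vanishes thanks to $|\bar x-\bar y|^2/\varepsilon\to 0$, while the left-hand side stays at least $\lambda M_{\delta,\eta} + \eta/T^2$ for some $M_{\delta,\eta}\to M$. Sending finally $\delta,\eta\to 0$ gives $\lambda M\le 0$, contradicting $\lambda>0$ and $M>0$. The genuine obstacle in the whole scheme is the structural estimate of the third paragraph: the Lipschitz hypothesis on $\Gamma$ (as opposed to merely on $\Gamma^2$) is essential, since the naive bound $\mathrm{Lip}(\Gamma^2)\,|\bar x-\bar y|\,|\mathrm{tr}\,Y|$ scales like $1/\sqrt\varepsilon$ and blows up, whereas the square-root-Lipschitz trick replaces it by the bounded, vanishing quantity $|\bar x-\bar y|^2/\varepsilon$.
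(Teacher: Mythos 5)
Your proof is correct and takes essentially the same route as the paper: doubling of variables with a coercive penalty and an $\eta/t$ term, exclusion of $\bar t=T$ via the Lipschitz terminal data, the parabolic Crandall--Ishii lemma (Proposition \ref{prop:importante}), H\"older continuity of $L^1,L^2$ after picking a (near-)optimal control, and the key trace bound obtained by testing the matrix inequality with the vectors $\Gamma(\bar x,\sigma^*)e_i,\Gamma(\bar y,\sigma^*)e_i$, which is precisely the paper's computation with the rank-one matrix $B$ in \eqref{eq:miservedopo}. The only differences are technical refinements: you decouple the growth penalty from the doubling parameter and take iterated limits $\varepsilon\to0$ before $\delta,\eta\to0$ (so the quadratic error vanishes and the conclusion holds for every $\lambda>0$, whereas the paper's single-parameter version absorbs $C\alpha|\bar x-\bar y|^2$ into $\lambda\frac\alpha2|\bar x-\bar y|^2$ and thus needs $\lambda$ large), and you use $\mu$-approximate rather than exact minimizers, which avoids invoking attainment of the infimum.
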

\begin{proof}
Suppose, by contradiction, that $\exists\,(s,z)\in(0,T]\in\R^d$ such that $u(s,z)-v(s,z)=\delta>0$.

We consider, for $\alpha,\nu>0$, the following quantity
\begin{equation}\label{eq:doubling}
u(t,x)-v(t,y)-\frac\alpha2 |x-y|^2-\frac 1\alpha |x|^2-\frac\nu t\,.
\end{equation}
Due to the coercive term $\frac 1\alpha2|x|^2$ and the boundedness of $u$ and $v$, we know that \eqref{eq:doubling} achieves a maximum.\\

We denote the maximum by $M$ and one of its maximum points with $(\bt,\bx,\by)\in(0,T]\in\R^{2d}$.

We must have, for $\nu$ sufficiently small and $\alpha$ sufficiently large,
\begin{equation}\label{eq:unastima}
u(\bt,\bx)-v(\bt,\by)-\frac\alpha 2|\bx-\by|^2-\frac 1\alpha|\bx|^2-\frac\nu\bt\ge u(s,z)-v(s,z)-\frac 1\alpha|z|^2-\frac\nu s\ge\delta-\frac\delta 2=\frac\delta 2\,.
\end{equation}
This implies, thanks to the boundedness of $u$ and $v$,
$$
\frac 1\alpha|\bx|^2+\frac\alpha 2|\bx-\by|^2\le u(\bt,\bx)-v(\bt,\by)\le C\implies\lim\limits_{\alpha\to+\infty}|\bx-\by|=0\,.
$$
This cannot happen in $\bt=T$. In this case we have
$$
\frac\delta 2\le u(s,z)-v(s,z)-\frac 1\alpha|z|^2-\frac\nu s\le u_T(\bx)-v_T(\by)-\frac\alpha 2|\bx-\by|^2-\frac 1\alpha|\bx|^2\,.
$$
Since $G$ is Lipschitz, $u_T\le v_T$ and $|\bx-\by|\to0$, we have for $\alpha$ sufficiently large
$$
u_T(\bx)-v_T(\by)\le\frac\delta 3\implies u_T(\bx)-v_T(\by)-\frac\alpha 2|\bx-\by|^2-\frac 1\alpha|\bx|^2\le\frac\delta 3\,,
$$
which gives a contradiction.

Then we must have $\bt\in(0,T)$. Applying Proposition \ref{prop:importante} with
\begin{align*}
&u_1(t,x)=u(t,x)\,,\qquad u_2(t,y)=-v(t,y)\,,\\
&\phi(t,x,y)=\frac\alpha 2|x-y|^2+\frac 1\alpha|x|^2+\frac\nu t\,,
\end{align*}
we obtain that there eists $a,b,X,Y$ such that
$$
(a,\nabla_x\phi(\bt,\bx,\by),X)\in\overline{\mathcal P^{2,+}u(\bt,\bx)}\,,\qquad (-b,-\nabla_y\phi(\bt,\bx,\by),-Y)\in\overline{\mathcal P^{2,-}v(\bt,\by)}\,,
$$
and
\begin{equation}\label{eq:checoglia}
a+b=-\frac\nu{\bt^2}\,,\qquad\begin{pmatrix}
X & 0\\
0 & Y
\end{pmatrix}\le D^2\phi(\bt,\bx,\by)\,.
\end{equation}
From now on, we will omit for the function $\phi$ his dependence on $(\bt,\bx,\by)$.

Since $u$ is a subsolution and $v$ a supersolution, one has
\begin{align*}
-&a-H^2(\bt,\bx,\mathrm{tr}(X))-H^1(\bt,\bx,\nabla_x\phi)+\lambda u(\bt,\bx)\le 0\,,\\
&b-H^2(\bt,\by,-\mathrm{tr}(Y))-H^1(\bt,\by,-\nabla_y\phi)+\lambda v(\bt,\by)\ge0\,.
\end{align*}
Subtracting the two inequalities we obtain
\begin{equation}\label{eq:nonhoidee}
\begin{split}
\frac\nu{\bt^2}+\lambda(u(\bt,\bx)-v(\bt,\by))&\le H^2(\bt,\bx,\mathrm{tr}(X))-H^2(\bt,\by,-\mathrm{tr}(Y))\\
&H^1(\bt,\bx,\nabla_x\phi)-H^1(\bt,\by,-\nabla_y\phi)\,.
\end{split}
\end{equation}
The first term in the left-hand side is non-negative, so we can ignore it. For the second term, we use \eqref{eq:unastima} to get
\begin{align*}
\frac\delta 2+\lambda\frac\alpha 2|\bx-\by|^2&\le H^2(\bt,\bx,\mathrm{tr}(X))-H^2(\bt,\by,-\mathrm{tr}(Y))\\
&+H^1(\bt,\bx,\nabla_x\phi)-H^1(\bt,\by,-\nabla_y\phi)\,.
\end{align*}
In order to estimate the last two terms of the right-hand side term, we first compute the derivatives of $\phi$. We have
\begin{align*}
&\nabla_x\phi=\alpha(\bx-\by)+\frac2\alpha\bx\,,\qquad\nabla_y\phi=-\alpha(\bx-\by)\,,\\
&D^2\phi=\alpha\begin{pmatrix}
I & -I\\
-I & I
\end{pmatrix}+\frac 2\alpha\begin{pmatrix}
I & 0\\
0 & 0
\end{pmatrix}\,.
\end{align*}
Using the very definition of $H^2$, we obtain, calling $\sigma$ the optimal control for $H^2(\bt,\by,-\mathrm{tr}(Y))$,
\begin{equation}\label{eq:H2}\begin{split}
& H^2(\bt,\bx,\mathrm{tr}(X))-H^2(\bt,\by,-\mathrm{tr}(Y))\\
\le&\frac{\Gamma(\bx,\sigma)^2}{2}\mathrm{tr}(X)+\frac{\Gamma(\by,\sigma)^2}{2}\mathrm{tr}(Y)+e^{-\lambda(T-t)}\big(L^2(\bt,\bx,\sigma)-L^2(\bt,\by,\sigma)\big)\\
\le&C|\bx-\by|^\beta+\mathrm{tr}\left(\frac{\Gamma(\bx,\sigma)^2}{2}X+\frac{\Gamma(\by,\sigma)^2}{2}Y\right)\le\omega(\alpha)+\mathrm{tr}\left(\frac{\Gamma(\bx,\sigma)^2}{2}X+\frac{\Gamma(\by,\sigma)^2}{2}Y\right)\,.
\end{split}\end{equation}
The last term is estimated as follows.

We have
$$
\mathrm{tr}\left(\frac{\Gamma(\bx,\sigma)^2}{2}X+\frac{\Gamma(\by,\sigma)^2}{2}Y\right)=\miezz\mathrm{tr}\left(B\begin{pmatrix}
X & 0\\
0 & Y
\end{pmatrix}\right)\,,
$$
where $B$ is equal to
\begin{equation}\label{eq:miservedopo}
\begin{pmatrix}
\Gamma(\bx,\sigma)^2 & \Gamma(\bx,\sigma)\Gamma(\by,\sigma)\\
\Gamma(\bx,\sigma)\Gamma(\by,\sigma) & \Gamma(\by,\sigma)^2
\end{pmatrix}\,.
\end{equation}
Since $B$ is non-negative definite, we can use \eqref{eq:checoglia} to obtain
\begin{align*}
&\mathrm{tr}\left(\frac{\Gamma(\bx,\sigma)^2}{2}X+\frac{\Gamma(\by,\sigma)^2}{2}Y\right)\le\mathrm{tr}(BD^2\phi)\\
=\,&\alpha d|\Gamma(\bx,\sigma)-\Gamma(\by,\sigma)|^2+\frac{2d}\alpha\Gamma(\bx,\sigma)^2\le C\alpha|\bx-\by|^2+\omega(\alpha)\,,
\end{align*}
where $\omega(\alpha)$ is a quantity depending on $\alpha$ such that $\omega(\alpha)\to 0$ when $\alpha\to+\infty$.

We argue in a similar way in order to bound the $H^1$ term. We have, calling $\alpha_0$ the optimal control for $H^1(\bt,\by,-\nabla_y\phi)$,
\begin{equation}\label{eq:H1}\begin{split}
&H^1(\bt,\bx,\nabla_x\phi)-H^1(\bt,\by,\nabla_y\phi)\\
\le\,&\langle b(\bx,\alpha_0),\nabla_x\phi\rangle+\langle b(\by,\alpha_0),\nabla_y\phi\rangle +e^{-\lambda(T-t)}\big(L^1(\bt,\bx,\alpha_0)-L^1(\bt,\by,\alpha_0)\big)\\
\le\,&|\bx-\by|^\beta+\langle b(\bx,\alpha_0)-b(\by,\alpha_0),\alpha(\bx-\by)\rangle+\frac 2\alpha\langle b(\bx,\alpha_0),\bx\rangle\le C\alpha|\bx-\by|^2+\omega(\alpha)\,,
\end{split}\end{equation}
where in the last passage we used the Lipschitz bound and the boundedness of $b$.

Putting together all the estimates in \eqref{eq:nonhoidee}, we obtain
$$
\frac\delta 2+\lambda\frac\alpha 2|\bx-\by|^2\le C\alpha|\bx-\by|^2+\omega(\alpha)\,,
$$
which gives a contradiction for $\alpha$ sufficiently small and $\lambda$ sufficiently large, and proves the Theorem.
\end{proof}

The existence of solutions of \eqref{eq:hjb2} is a natural adaptation of the elliptic case, whose proof can be found in \cite{crandall2}. Following the ideas of that article, existence of solutions is guaranteed if there exists a continuous subsolution $\underline{u}$ and a continuous supersolution $\overline{u}$ such that (in the case of Dirichlet conditions)
\begin{equation}\label{eq:sottengoppa}
\underline{u}(T,x)=\overline{u}(T,x)=G(x)\,.
\end{equation}
The solution turns to be the following one:
$$
u(t,x)=\sup\{w(t,x)\,:\,\underline{u}\le w\le\overline{u}\hbox{ and $w$ is a subsolution of \eqref{eq:hjb2}}\}\,.
$$
We look for a function $\underline{u}$ of this type:
$$
\underline{u}(t,x)=G(x)+\xi(t),\qquad\mbox{with }\xi(T)=0\,.
$$
Obviously \eqref{eq:sottengoppa} is satisfied. In order to have $\underline{u}$ subsolution of \eqref{eq:hjb2}, it has to be
\begin{equation}\label{eq:zonacesarini}
-\xi'(t)+\lambda\xi(t)\le H^2(t,x,\Delta G(x))+H^1(t,x,\nabla G(x))-\lambda G(x)\,,
\end{equation}
for each $(t,x)\in(0,T)\times\R^d\,.$ If we require $G\in\mathcal{C}^2(\R^d)$ with bounded derivatives, we obtain that the right-hand side of \eqref{eq:zonacesarini} is bounded from below. So it is sufficient to take $\xi$ as the solution of
$$
\begin{cases}
-\xi'(t)+\lambda\xi(t)=-M\,,\\
\xi(T)=0\,,
\end{cases}
$$
with $M$ sufficiently large.

Reasoning in the same way, a good supersolution $\overline{u}$ will be:
$$
\overline{u}(t,x)=G(x)+\xi(t)\,,
$$
with $\xi$ solution, for $M$ sufficiently large, of the ODE
$$
\begin{cases}
-\xi'(t)+\lambda\xi(t)=M\,,\\
\xi(T)=0\,,
\end{cases}
$$
As regards uniqueness, it is an obvious consequence of the comparison principle.

So, we have proved the following
\begin{thm}\label{thm:existence}
Suppose hypotheses of Theorem \ref{thm:verification} and \eqref{eq:bG} are satisfied, and suppose that $G\in\mathcal C^2(\R^d)$ with bounded derivatives.\\
Then there exists a unique viscosity solution for equation \eqref{eq:hjb2}, and, consequently, \eqref{eq:hjb}.
\end{thm}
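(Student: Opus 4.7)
The plan is to derive Theorem \ref{thm:existence} by combining the comparison principle of Theorem \ref{thm:comparison} (for uniqueness) with Perron's method (for existence), using the barrier construction indicated just before the statement. Since most of the hard analytic work has already been done—Theorem \ref{thm:comparison} already contains the crucial doubling-variables argument, and Proposition \ref{prop:importante} is the tool that lets us deal with the fully nonlinear second-order term—the proof should be essentially assembly.

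For uniqueness, if $u_1, u_2$ are two bounded continuous viscosity solutions, then applying Theorem \ref{thm:comparison} once with $u_1$ subsolution and $u_2$ supersolution, and once with the roles reversed, gives $u_1 = u_2$. For existence, I would proceed in two steps. First, I build global barriers of the form $\underline{u}(t,x) = G(x) + \underline{\xi}(t)$ and $\overline{u}(t,x) = G(x) + \overline{\xi}(t)$, where $\underline{\xi}, \overline{\xi}$ are the explicit solutions of the linear ODEs $-\xi'(t) + \lambda \xi(t) = \mp M$ on $[0,T]$ with $\xi(T) = 0$. Because $G \in \mathcal{C}^2(\R^d)$ with bounded derivatives, because $b$ and $\Gamma$ are bounded, and because $L^1, L^2$ are bounded on the compact control sets, the quantity $H^2(t,x,\Delta G(x)) + H^1(t,x,\nabla G(x)) - \lambda G(x)$ is globally bounded; choosing $M$ larger than this bound yields via \eqref{eq:zonacesarini} that $\underline{u}$ is a classical (hence viscosity) subsolution of \eqref{eq:hjb2} and, by the symmetric argument, that $\overline{u}$ is a classical supersolution. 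By construction both satisfy $\underline{u}(T,x) = \overline{u}(T,x) = G(x)$, so condition \eqref{eq:sottengoppa} holds.

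Second, I apply Perron's method in the form of \cite{crandall2}: set
\[
u(t,x) := \sup\bigl\{w(t,x) : \underline{u} \le w \le \overline{u},\ w \text{ is a viscosity subsolution of \eqref{eq:hjb2}}\bigr\}.
\]
The upper semicontinuous envelope $u^*$ is a viscosity subsolution, as the pointwise supremum of a family of subsolutions of a proper, continuous fully nonlinear equation. The lower semicontinuous envelope $u_*$ is a viscosity supersolution by the classical bump lemma: if $u_*$ failed to be a supersolution at some point, one could locally perturb $u$ upward by a strict subsolution obtained from a smooth bump, contradicting the maximality in the definition of $u$. Once both facts are in place, Theorem \ref{thm:comparison} gives $u^* \le u_*$ on $(0,T] \times \R^d$, while the reverse inequality is automatic, so $u^* = u_* = u$ is continuous and is a viscosity solution of \eqref{eq:hjb2}. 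The corresponding existence statement for \eqref{eq:hjb} then follows by the change of variable $u(t,x) \mapsto e^{\lambda(T-t)} u(t,x)$.

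The only technically delicate step is the bump lemma / supersolution argument for $u_*$ in the fully nonlinear parabolic setting on the unbounded domain $\R^d$. The main obstacle there is constructing a valid local perturbation that uses the ellipticity coming from $\Gamma \ge \lambda_1 > 0$ in \eqref{eq:bG} and the continuity of $H^1, H^2$ in all variables; the compactness of the controls $\mathcal U, \mathcal S$ and the Lipschitz/H\"older regularity assumptions inherited from Theorem \ref{thm:verification} are precisely what is needed to carry out this perturbation argument verbatim from \cite{crandall2}. No new difficulty beyond what was handled in Theorem \ref{thm:comparison} appears.
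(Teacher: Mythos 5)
Your proposal is correct and follows essentially the same route as the paper: uniqueness from the comparison principle of Theorem \ref{thm:comparison}, and existence by Perron's method as in \cite{crandall2}, with the barriers $\underline{u}(t,x)=G(x)+\xi(t)$ and $\overline{u}(t,x)=G(x)+\xi(t)$ built from the ODEs $-\xi'+\lambda\xi=\mp M$ and the boundedness of $H^2(t,x,\Delta G)+H^1(t,x,\nabla G)-\lambda G$ guaranteed by $G\in\mathcal C^2(\R^d)$ with bounded derivatives. The paper leaves the Perron machinery (envelopes and the bump argument) to the citation, exactly as you do, so no further detail is required.
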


Now we prove some regularity results for the solution $u$. These results will be essential in order to work with the Fokker-Planck equation.
\begin{thm}\label{thm:Lipsc}
Suppose \eqref{eq:bG} hold true and hypotheses of Theorem \ref{thm:verification} with $\beta=1$ are satisfied. Moreover, suppose that $G$ is a globally Lipschitz function.

Then, every solution of \eqref{eq:hjb2} (and consequently of \eqref{eq:hjb}) is globally Lipschitz in the space variable, with a Lipschitz constant bounded uniformly in $t$.
\end{thm}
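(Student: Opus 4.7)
The strategy is the classical doubling-of-variables argument in viscosity theory, adapted to the non-smoothness of the natural penalty $L|x-y|$. I will work directly on equation \eqref{eq:hjb2} and choose the parameter $\lambda$ large enough so that the zeroth-order term can absorb the bad terms; once a Lipschitz estimate is established for the transformed solution $v$, the estimate for $u$ follows from $u=e^{\lambda(T-t)}v$ with constant $e^{\lambda T}\mathrm{Lip}(v)$.

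First, suppose by contradiction that there exists $(s,z,w)\in(0,T]\times\R^{2d}$ with $v(s,z)-v(s,w)-L|z-w|>0$ for a constant $L$ to be chosen very large. For small parameters $\mu,\eta,\delta>0$ consider the auxiliary function
\[
\Phi(t,x,y)=v(t,x)-v(t,y)-L\sqrt{|x-y|^2+\mu^2}-\frac{\eta}{t}-\delta(|x|^2+|y|^2),
\]
which, thanks to the boundedness of $v$ and the coercivity in $(x,y)$, attains its maximum at some $(\bt,\bx,\by)$; for $\eta,\delta$ small this maximum is still positive. I rule out $\bt=T$ by comparing with the terminal datum and using the Lipschitz continuity of $G$: for $L\ge\mathrm{Lip}(G)+1$ one gets $v(T,\bx)-v(T,\by)\le\mathrm{Lip}(G)|\bx-\by|<L|\bx-\by|$, contradicting positivity of $\Phi$. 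The smoothing $\mu>0$ is needed to make $\phi(t,x,y):=L\sqrt{|x-y|^2+\mu^2}+\eta/t+\delta(|x|^2+|y|^2)$ a legitimate $C^{1,2}$ test function.

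Next I apply Proposition \ref{prop:importante} with $u_1=v$, $u_2=-v$ and this $\phi$, obtaining $a,b\in\R$ and symmetric matrices $X,Y$ with
\[
(a,\nabla_x\phi,X)\in\overline{\mathcal P^{2,+}v(\bt,\bx)},\qquad(-b,-\nabla_y\phi,-Y)\in\overline{\mathcal P^{2,-}v(\bt,\by)},
\]
$a+b=-\eta/\bt^2$, and the crucial matrix inequality $\mathrm{diag}(X,Y)\le D^2\phi$. Writing $v:=L(\bx-\by)/R$ with $R=\sqrt{|\bx-\by|^2+\mu^2}$, the gradients $\nabla_x\phi=v+2\delta\bx$ and $-\nabla_y\phi=v-2\delta\by$ are bounded by $L+C\delta$. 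Subtracting the viscosity inequalities and using $\Phi(\bt,\bx,\by)>0$ (so that $v(\bt,\bx)-v(\bt,\by)>L|\bx-\by|$) yields
\[
\lambda L|\bx-\by|\le\bigl[H^2(\bt,\bx,\mathrm{tr}(X))-H^2(\bt,\by,-\mathrm{tr}(Y))\bigr]+\bigl[H^1(\bt,\bx,\nabla_x\phi)-H^1(\bt,\by,-\nabla_y\phi)\bigr].
\]

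The two right-hand side terms are then estimated exactly as in the proof of Theorem \ref{thm:comparison}. For $H^1$, picking the optimal $\alpha_0$ for $H^1(\bt,\by,-\nabla_y\phi)$, the Lipschitz and boundedness properties of $b$ together with the Lipschitz character of $L^1$ (here $\beta=1$ is needed) give a bound $\le C(1+L)|\bx-\by|+\omega(\delta)$. For $H^2$, picking the optimal $\sigma$ and using $\mathrm{diag}(X,Y)\le D^2\phi$ against the positive semidefinite matrix $B$ from \eqref{eq:miservedopo}, a direct computation of $D^2\phi$ shows $\mathrm{tr}(BD^2\phi)=(\Gamma(\bx,\sigma)-\Gamma(\by,\sigma))^2\mathrm{tr}\bigl((L/R)(I-\hat v\hat v^T)\bigr)+O(\delta)\le CL|\bx-\by|^2/R+O(\delta)\le CL|\bx-\by|+O(\delta)$, so the $H^2$ difference is also $\le C(1+L)|\bx-\by|+\omega(\delta)$. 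Inserting these bounds and letting $\eta,\delta,\mu\to0$ along an appropriate order gives $\lambda L|\bx-\by|\le C(1+L)|\bx-\by|$, which, since $|\bx-\by|>0$, forces $\lambda\le C+C/L$. The main obstacle is precisely this matrix estimate for $H^2$: one must verify that the rank-one structure of $B$ combined with the block $\pm$-structure of $D^2\phi$ produces a trace proportional to $(\Gamma(\bx,\sigma)-\Gamma(\by,\sigma))^2$ rather than to $L/R$, and this is the reason the regularized penalty $\sqrt{|x-y|^2+\mu^2}$ (smooth but with bounded gradient) is preferable to either $\alpha|x-y|^2/2$ or $|x-y|$. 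Choosing $\lambda$ sufficiently large in the original transformation $v=e^{-\lambda(T-t)}u$ finally contradicts the standing assumption and shows $v$ is Lipschitz with constant $L$, uniformly in $t\in[0,T]$; reverting to $u$ concludes the proof.
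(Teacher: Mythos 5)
Your proposal follows essentially the same route as the paper: doubling of variables, exclusion of $\bt=T$ via the Lipschitz bound on $G$, Proposition \ref{prop:importante}, the same estimates on the $H^1$ and $H^2$ differences via the matrix $B$ of \eqref{eq:miservedopo} tested against $D^2\phi$, and absorption of the bad terms by taking $\lambda$ large in the transformation $v=e^{-\lambda(T-t)}u$. The only real difference is the penalty: the paper avoids the non-smooth $L|x-y|$ by proving the equivalent family of inequalities $|u(t,x)-u(t,y)|\le M\left(\delta+\frac{|x-y|^2}{\delta}\right)$ for all $\delta>0$ (then optimizing $\delta=|x-y|$), while you smooth the distance as $L\sqrt{|x-y|^2+\mu^2}$. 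Your trace computation is correct: with $w=\bx-\by$, $R=\sqrt{|w|^2+\mu^2}$ and $P=\frac1R I-\frac{ww^T}{R^3}$, one gets $\mathrm{tr}(BD^2\phi)=L\,(\Gamma(\bx,\sigma)-\Gamma(\by,\sigma))^2\,\mathrm{tr}(P)+O(\delta)\le CL\frac{|\bx-\by|^2}{R}+O(\delta)\le CL|\bx-\by|+O(\delta)$, so both penalties do the job.

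The one step that does not hold as written is the conclusion. After subtracting the viscosity inequalities you have, for each fixed $\mu,\eta,\delta$, an estimate of the form $\lambda\big(v(\bt,\bx)-v(\bt,\by)\big)\le C(1+L)|\bx-\by|+\omega(\delta)$, where $(\bt,\bx,\by)$ depends on the parameters. You then "let $\eta,\delta,\mu\to0$" and divide by $|\bx-\by|$; but the maximum points move with the parameters, $|\bx-\by|$ may tend to $0$, and the quotient $\omega(\delta)/|\bx-\by|$ need not vanish, so the inequality $\lambda L\le C(1+L)$ is not justified this way. The fix is to use the full strength of the positivity of the maximum rather than only $v(\bt,\bx)-v(\bt,\by)>L|\bx-\by|$: since $\Phi(\bt,\bx,\by)\ge\Phi(s,z,w)\ge\theta_0/2>0$ for all small parameters (with $\theta_0$ the positive gap in the contradiction hypothesis), you actually have $v(\bt,\bx)-v(\bt,\by)\ge\frac{\theta_0}{2}+L|\bx-\by|$, hence $\lambda\frac{\theta_0}{2}+\lambda L|\bx-\by|\le C(1+L)|\bx-\by|+\omega(\delta)$. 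Choosing $\lambda\ge 2C$ and $L\ge\max\{1,\mathrm{Lip}(G)+1\}$, the gradient terms on the right are absorbed by $\lambda L|\bx-\by|$, leaving $\lambda\frac{\theta_0}{2}\le\omega(\delta)$, which is a contradiction for $\delta$ small — no division and no interchange of limits needed. This is exactly how the paper's version closes (the coefficient of the penalty stays on the left and is absorbed for $\lambda>C$); with this adjustment your argument is complete.
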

\begin{proof}
We have to prove that
$$
|u(t,x)-u(t,y)|\le L|x-y|\,,
$$
for a certain constant $l$. Using Young's inequality, the last inequality is equivalent to
\begin{equation}\label{eq:equivLip}
|u(t,x)-u(t,y)|\le M\left(\delta+\frac{|x-y|^2}{\delta}\right)\,,\qquad\forall\delta>0\,,
\end{equation}
for a certain constant $M>0$. So, we will prove \eqref{eq:equivLip}.

We consider, for $\delta,\gamma,\nu>0$, the following quantity
\begin{equation}\label{eq:doublingL}
u(t,x)-u(t,y)-M\left(\delta+\frac{|x-y|^2}{\delta}\right)-\gamma|x|^2-\frac\nu t\,.
\end{equation}
Due to the coercive term $\gamma|x|^2$ and the boundedness of $u$, we know that the function in \eqref{eq:doublingL} achieves a maximum.

We denote one of its maximum points with $(\bt,\bx,\by)\in(0,T]\times\R^{2d}$.\\
Suppose $\bt=T$. Then
$$
u(t,x)-u(t,y)-M\left(\delta+\frac{|x-y|^2}{\delta}\right)-\gamma|x|^2-\frac\nu t\le G(\bx)-G(\by)-M\left(\delta+\frac{|\bx-\by|^2}{\delta}\right)-\gamma|\bx|^2-\frac\nu T\,,
$$
which implies for $M$ sufficiently large, since $G$ is Lipschitz,
\begin{equation}\label{eq:aimL}
u(t,x)-u(t,y)\le M\left(\delta+\frac{|x-y|^2}{\delta}\right)+\gamma|x|^2+\frac\nu t\,.
\end{equation}
Suppose now $\bt\in(0,T)$. If the maximum achieved by the function in \eqref{eq:doublingL} is non-positive, then \eqref{eq:aimL} remains true.

Now, suppose by contradiction that \eqref{eq:doublingL} attains a strictly positive maximum. This implies
\begin{equation}\label{eq:contrL1}
u(\bt,\bx)-u(\bt,\by)\ge M\left(\delta+\frac{|\bx-\by|^2}{\delta}\right)
\end{equation}
and, for a certain $C>0$,
\begin{equation}\label{eq:contrL2}
\gamma|\bx|^2\le u(\bt,\bx)-u(\bt,\by)\le C\,.
\end{equation}
Applying Proposition \ref{prop:importante} with
\begin{align*}
&u_1(t,x)=u(t,x)\,,\qquad u_2(t,y)=-u(t,y)\,,\\
&\phi(t,x,y)=M\left(\delta+\frac{|x-y|^2}{\delta}\right)+\gamma|x|^2+\frac\nu t\,,
\end{align*}
we obtain that there exist $a,b,X,Y$ such that
\begin{equation*}
(a,\nabla_x\phi(\bt,\bx,\by),X)\in\overline{\mathcal P^{2,+}u(\bt,\bx)}\,,\qquad (-b,-\nabla_y\phi(\bt,\bx,\by),-Y)\in\overline{\mathcal P^{2,-}u(\bt,\by)}\,,
\end{equation*}
and
\begin{equation}\label{eq:comeprima}
a+b=-\frac\nu{\bt^2}\,,\qquad\begin{pmatrix}
X & 0\\
0 & Y
\end{pmatrix}\le D^2\phi(\bt,\bx,\by)\,.
\end{equation}
From now on, we will omit for the function $\phi$ his dependence on $(\bt,\bx,\by)$.

The rest of the proof follows the same ideas of Theorem \ref{thm:comparison}. Since $u$ is both a subsolution and a supersolution, subtracting the two inequalities we obtain
\begin{equation*}
\begin{split}
\frac\nu{\bt^2}+\lambda(u(\bt,\bx)-v(\bt,\by))&\le H^2(\bt,\bx,\mathrm{tr}(X))-H^2(\bt,\by,-\mathrm{tr}(Y))\\
&H^1(\bt,\bx,\nabla_x\phi)-H^1(\bt,\by,-\nabla_y\phi)\,.
\end{split}
\end{equation*}
The left-hand side is estimated with \eqref{eq:contrL1} to obtain
\begin{align*}
\frac\delta 2+\lambda\frac\alpha 2|\bx-\by|^2&\le H^2(\bt,\bx,\mathrm{tr}(X))-H^2(\bt,\by,-\mathrm{tr}(Y))\\
&+H^1(\bt,\bx,\nabla_x\phi)-H^1(\bt,\by,-\nabla_y\phi)\,.
\end{align*}
For the right-hand side term, we compute the derivatives of $\phi$:
\begin{align*}
&\nabla_x\phi=\frac{2M}\delta(\bx-\by)+\frac2\gamma\bx\,,\qquad\nabla_y\phi=-\frac{2M}\delta(\bx-\by)\,,\\
&D^2\phi=\frac{2M}\delta\begin{pmatrix}
I & -I\\
-I & I
\end{pmatrix}+\frac 2\gamma\begin{pmatrix}
I & 0\\
0 & 0
\end{pmatrix}\,.
\end{align*}
We call $\sigma$ the optimal control for $H^2(\bt,\by,-\mathrm{tr}(Y))$. As in \eqref{eq:H2} with $\beta=1$, we obtain
\begin{align*}
H^2(\bt,\bx,\mathrm{tr}(X))-H^2(\bt,\by,-\mathrm{tr}(Y))
\le C|\bx-\by|^\beta+\mathrm{tr}\left(\frac{\Gamma(\bx,\sigma)^2}{2}X+\frac{\Gamma(\by,\sigma)^2}{2}Y\right)\,.
\end{align*}
Defining $B$ as in \eqref{eq:miservedopo}, the last term is estimated in this way:
\begin{align*}
&\mathrm{tr}\left(\frac{\Gamma(\bx,\sigma)^2}{2}X+\frac{\Gamma(\by,\sigma)^2}{2}Y\right)\le\mathrm{tr}(BD^2\phi)\\
=\,&\frac{2M}\delta d|\Gamma(\bx,\sigma)-\Gamma(\by,\sigma)|^2+2\gamma d\Gamma(\bx,\sigma)^2\le CM\left(\delta+\frac{|\bx-\by|^2}\delta\right)+C\gamma\,.
\end{align*}
where $\omega(\alpha)$ is a quantity depending on $\alpha$ such that $\omega(\alpha)\to 0$ when $\alpha\to+\infty$.

In the same way of \eqref{eq:H1}, we obtain the bound for the $H^1$ term:
\begin{align*}
H^1(\bt,\bx,\nabla_x\phi)-H^1(\bt,\by,\nabla_y\phi)\le CM\left(\delta+\frac{|\bx-\by|^2}\delta\right)+C\gamma|\bx|\,.
\end{align*}
Because of \eqref{eq:contrL2}, we have $\gamma|\bx|\to0$ when $\gamma\to 0$. Putting together all the estimates, we obtain, for $M$ large enough,
$$
\lambda M\left(\delta+\frac{|\bx-\by|^2}\delta\right)\le CM\left(\delta+\frac{|\bx-\by|^2}\delta\right)+\omega(\gamma)\,,
$$
where $\omega(\gamma)$ is a quantity that tends to $0$ when $\gamma\to0$.

Hence, for $\gamma$ small enough and $\lambda>C$, we obtain a contradiction.\\
This means that in each case \eqref{eq:aimL} reamins true and, letting $\gamma$ and $\nu$ go to $0$, we get
$$
u(t,x)-u(t,y)\le M\left(\delta+\frac{|x-y|^2}\delta\right)
$$
for all $\delta>0$. Eventually, choosing $\delta=|x-y|$ and reversing the role of $x$ and $y$, we conclude.
\end{proof}
Our next goal is to show that, with a strengthening of hypotheses, $u$ satisfies a stronger estimate, which includes, as a direct consequence, a twice differentiability almost everywhere with respect to the $x$ variable.

For this reason, we introduce the notion of \emph{semiconcavity}:
\begin{defn}
Let $\Omega\subseteq\R^d$. We say that a function $f:\Omega\to\R$ is semiconcave if for all $x\in\Omega$ and for all $h$ such that $x+h$, $x-h\in\Omega$ we have for a certain constant $C$
\begin{equation}\label{eq:semic}
f(x+h)+f(x-h)-2f(x)\le C|h|^2
\end{equation}
\end{defn}
A semiconcave function is twice differentiable almost everywhere. Actually, the following result holds (see \cite{cannarsa}):
\begin{rem}\emph{
The following properties are equivalent:
\begin{itemize}
\item[-] $f$ is a semiconcave function;
\item[-]  There exists a constant $C$ such that $f(x)-C|x|^2$ is a concave function;
\item[-]  $f$ is a.e. twice differentiable and there exists a constant $C$ such that $D^2f\le CI$, where $I$ is the $d\times d$-identity matrix.
\end{itemize}}
\end{rem}
From this remark we immediately obtain:
$$
f\in W^{2,\infty}\implies f\mbox{ semiconcave in }\Omega\,,
$$
since each $W^{2,\infty}$ function satisfies $\norminf{D^2f}\le C$ for a certain $C$, and so the third condition of the remark is satisfied.

More generally, if $f$ is in $W^{2,\infty}$ then $f$ and $-f$ are semiconcave functions.

In order to prove the semiconcave regularity for the value function, we need the following technical lemma on semiconcave and $W^{2,\infty}$ functions:
\begin{lem}
Let $f:\Omega\to\R$. Suppose that the following inequality holds for every $x,y,z\in\Omega$, $\delta>0$ and for a certain $C>0$:
\begin{equation}\label{eq:semic2}
f(x)+f(y)-2f(z)\le C\left(\delta+\frac 1\delta(|x-z|^4+|y-z|^4+|x+y-2z|^2)\right)\,.
\end{equation}
Then $f$ is locally Lipschitz and semiconcave.

Furthermore, if $f$ is Lipschitz and semiconcave then \eqref{eq:semic2} holds for all $x,y,z\in\Omega$, $\delta>0$ and for a certain $C>0$.

Finally, if $f\in W^{2,+\infty}(\Omega)$ then there exists a $C>0$ such that, for every $x,y,z\in\Omega$ and $\delta>0$
\begin{equation}\label{eq:w2inf}
|f(x)+f(y)-2f(z)|\le C\left(\delta +\frac 1\delta(|x-z|^4+|y-z|^4+|x+y-2z|^2)\right).
\end{equation}
\end{lem}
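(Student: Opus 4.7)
The lemma has three separate implications, each of which reduces to a careful choice of the free points $x,y,z$ followed by optimisation in $\delta$ via Young's inequality.

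For the implication \eqref{eq:semic2} $\Rightarrow$ local Lipschitz and semiconcave, I would exploit two specialisations of the three-point inequality. To get Lipschitzianity, set $y=z$: then $|y-z|$ vanishes and $|x+y-2z|=|x-z|$, so \eqref{eq:semic2} collapses to
$$
f(x)-f(z)\le C\left(\delta+\frac{|x-z|^4+|x-z|^2}{\delta}\right),
$$
and minimising in $\delta$ gives $f(x)-f(z)\le 2C\sqrt{|x-z|^2+|x-z|^4}$, which on every bounded subset of $\Omega$ is linear in $|x-z|$. Since the right-hand side of \eqref{eq:semic2} is symmetric in the first two arguments, swapping the roles of $x$ and $z$ produces the opposite inequality, and $f$ is locally Lipschitz. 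To get semiconcavity, take $z=(x+y)/2$: this kills the cross term $|x+y-2z|$ and leaves $f(x)+f(y)-2f(\tfrac{x+y}{2})\le C\bigl(\delta+|x-y|^4/(8\delta)\bigr)$; optimising in $\delta$ of order $|x-y|^2$ yields the quadratic bound $C'|x-y|^2$, which is exactly \eqref{eq:semic} after the substitution $x=x_0+h$, $y=x_0-h$.

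For the converse, if $f$ is Lipschitz with constant $L$ and semiconcave with constant $C$, I would decompose
$$
f(x)+f(y)-2f(z)=\bigl[f(x)+f(y)-2f(\tfrac{x+y}{2})\bigr]+2\bigl[f(\tfrac{x+y}{2})-f(z)\bigr].
$$
Semiconcavity bounds the first bracket by $C|x-y|^2/4$ and the Lipschitz property bounds the second by $L|x+y-2z|$. Two standard Young inequalities, namely
$$
|x-y|^2\le 2|x-z|^2+2|y-z|^2\le 2\delta+\tfrac{1}{\delta}(|x-z|^4+|y-z|^4),\qquad L|x+y-2z|\le \tfrac{L^2\delta}{2}+\tfrac{|x+y-2z|^2}{2\delta},
$$
then produce \eqref{eq:semic2} with a suitable constant.

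For the $W^{2,\infty}$ case I would use the second-order Taylor expansion with integral remainder at $z$,
$$
f(x)-f(z)-\nabla f(z)\cdot(x-z)=\int_0^1(1-t)\bigl\langle D^2 f(z+t(x-z))(x-z),\,x-z\bigr\rangle\,dt,
$$
together with the analogous formula for $f(y)-f(z)$; this identity extends from $\mathcal C^2$ to $W^{2,\infty}$ by mollification since $\nabla f$ is Lipschitz and $D^2 f\in L^\infty$. Summing the two expansions yields the pointwise bound $|f(x)+f(y)-2f(z)|\le L|x+y-2z|+\tfrac{M}{2}(|x-z|^2+|y-z|^2)$, and the same Young-inequality bookkeeping as in the previous paragraph gives \eqref{eq:w2inf}. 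The only step requiring a word of care is the mollification justification for the Taylor identity in $W^{2,\infty}$; everything else is an exercise in choosing parameters so that the bookkeeping matches the precise three-point form of the right-hand side.
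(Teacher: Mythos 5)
Your argument is correct, and for the first two implications it is essentially the paper's proof: the same specialisations (taking $y=z$ for the Lipschitz bound, the midpoint choice for semiconcavity, and the decomposition through $f\bigl(\tfrac{x+y}{2}\bigr)$ plus Young's inequality for the converse), differing only in bookkeeping — the paper fixes $\delta=|x-y|$ rather than optimising, which yields the same local Lipschitz estimate. The only genuine divergence is in the $W^{2,\infty}$ statement: the paper disposes of it in one line by noting that $f\in W^{2,\infty}$ makes both $f$ and $-f$ Lipschitz and semiconcave, so the already-proved converse implication applies to each and gives the two-sided bound \eqref{eq:w2inf}; you instead re-derive the pointwise estimate from scratch via the Taylor formula with integral remainder, which is valid (your bound $|f(x)+f(y)-2f(z)|\le L|x+y-2z|+\tfrac{M}{2}(|x-z|^2+|y-z|^2)$ plus the same Young inequalities does the job) but costs you the mollification justification of the Taylor identity in $W^{2,\infty}$, precisely the step you flag as delicate; the paper's route avoids that issue entirely at no loss of generality. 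Your quantitative route does have the mild virtue of making the constant in \eqref{eq:w2inf} explicit in terms of $\|\nabla f\|_\infty$ and $\|D^2f\|_\infty$.
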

\begin{proof}
If \eqref{eq:semic2} is true, then, taking $x=z+h$, $y=z-h$, $\delta=|h|^2$, we obtain \eqref{eq:semic} and this proves that $f$ is semiconcave.

Furthermore, if we take $z=y$ and $\delta=|x-y|$ from \eqref{eq:semic2} we obtain
$$
f(x)-f(y)\le C\left(1+|x-y|^2\right)|x-y|\,,
$$
proving that $f$ is locally Lipschitz.

On the other hand, suppose that $f$ is Lipschitz and semiconcave. Then we know from \eqref{eq:semic} that
$$
f(\tilde{x}+h)+f(\tilde{x}-h)-2f(\tilde{x})\le C|h|^2\,,
$$
for each $\tilde{x}\in\Omega$, $h\in\R^d$ such that $\tilde{x}+h,\,\tilde{x}-h\in\Omega$.

We choose $\tilde{x}=\frac{x+y}{2}$, $h=\frac{x-y}{2}$, obtaining (up to changing $C$)
$$
f(x)+f(y)-2f\left(\frac{x+y}{2}\right)\le C|x-y|^2\,.
$$
Then we have, using the Lipschitz bound on $f$,
$$
f(x)+f(y)-2f(z)\le C|x-y|^2+2f\left(\frac{x+y}{2}\right)-2f(z)\le C\left(|x-y|^2+|x+y-2z|\right)\,.
$$
Writing $x-y=(x-z)+(z-y)$ and using a generalized Young's inequality, we get
$$
f(x)+f(y)-2f(z)\le C\left(\delta+\frac 1\delta\left(|x-z|^4+|y-z|^4+|x+y-2z|^2\right)\right)\,,
$$
and so \eqref{eq:semic2}. Finally, if $f$ is $W^{2,\infty}$ then $f$ and $-f$ are Lipschitz and semiconcave functions. Then \eqref{eq:semic2} holds for $f$ and $-f$, proving that \eqref{eq:w2inf} is true.
\end{proof}
\begin{prop}
Suppose hypotheses of Theorem \ref{thm:Lipsc} are satisfied. Futhermore, suppose that $G$, $L^1$ and $L^2$ are Lipschitz and semiconcave functions in the space variable, and $b$ and $\Gamma$ are $W^{2,\infty}$ functions in the space variable.

Then, every solution of \eqref{eq:hjb2} is semiconcave in the space variable.
\end{prop}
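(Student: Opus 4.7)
The plan is to verify the characterization \eqref{eq:semic2} of the previous lemma: since $u$ is already Lipschitz in space by Theorem \ref{thm:Lipsc}, it suffices to show that there exists $C>0$ such that for all $t\in[0,T]$ and $x,y,z\in\R^d$ and $\delta>0$,
\be\label{eq:propaim}
u(t,x)+u(t,y)-2u(t,z)\le C\left(\delta+\frac{1}{\delta}\psi(x,y,z)\right),\qquad \psi(x,y,z):=|x-z|^4+|y-z|^4+|x+y-2z|^2.
\ee
I would argue by contradiction and triple the variables, in the spirit of Theorems \ref{thm:comparison} and \ref{thm:Lipsc}: fix $M$ large and $\gamma,\nu>0$ small and look at the maximum of
$$
\Phi(t,x,y,z):=u(t,x)+u(t,y)-2u(t,z)-M\con-\gamma\big(|x|^2+|y|^2+|z|^2\big)-\frac\nu t.
$$
The coercive penalty and boundedness of $u$ guarantee existence of a maximizer $(\bt,\bx,\by,\bz)$. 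If the max is $\le0$ we are done, and the boundary case $\bt=T$ is excluded by taking $M$ large enough so that $G(\bx)+G(\by)-2G(\bz)\le M\conc$, which is possible since $G$ is Lipschitz and semiconcave and the previous lemma applies to $G$. So we may assume the max is strictly positive and attained at some $\bt\in(0,T)$; in particular $u(\bt,\bx)+u(\bt,\by)-2u(\bt,\bz)\ge M\conc$ and the coercive term forces $\gamma(|\bx|^2+|\by|^2+|\bz|^2)\le C$.

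I would then apply Proposition \ref{prop:importante} to the three functions $u(t,x)$, $u(t,y)$, $-2u(t,z)$ and to the test function $\phi(t,x,y,z):=M\con+\gamma(|x|^2+|y|^2+|z|^2)+\frac\nu t$, obtaining elements of $\overline{\mathcal P^{2,+}}u(\bt,\bx)$, $\overline{\mathcal P^{2,+}}u(\bt,\by)$ and $\overline{\mathcal P^{2,-}}u(\bt,\bz)$, together with the matrix inequality
$$
\begin{pmatrix}X_1 & 0 & 0\\ 0 & X_2 & 0\\ 0 & 0 & -X_3\end{pmatrix}\le D^2_{(x,y,z)}\phi(\bt,\bx,\by,\bz)
$$
and the time relation $a_1+a_2+a_3=-\nu/\bt^2$. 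Combining the subsolution inequalities at $(\bt,\bx)$, $(\bt,\by)$ and the supersolution inequality at $(\bt,\bz)$ with weights $+1,+1,+2$, the term $\nu/\bt^2$ is non--negative and the $\lambda u$ contribution is bounded below by $\lambda M\conc$; hence it remains to show that
\be\label{eq:needH}
\Delta H^2+\Delta H^1\le C\conc,
\ee
where $\Delta H^i$ denotes the corresponding three--point difference of the Hamiltonians evaluated at the jets.

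For the $H^1$--term, pick $\alpha_0\in\mathcal U$ optimal for $H^1(\bt,\bz,\nabla_z\phi)$: since $H^1$ is an infimum, $H^1(\bt,\bx,\nabla_x\phi)-H^1(\bt,\bz,\nabla_z\phi)$ (and analogously for $\by$) is bounded above by differences involving only $b(\cdot,\alpha_0)$ (which is $W^{2,\infty}$) and $L^1(\bt,\cdot,\alpha_0)$ (which is Lipschitz and semiconcave). The semiconcave lemma applied to these fixed--control functions, combined with the explicit form of $\nabla_x\phi,\nabla_y\phi,\nabla_z\phi$ and the bound $\gamma|\bx|\to0$, gives the $H^1$ contribution to \eqref{eq:needH}. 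For the $H^2$--term, pick $\sigma^*$ optimal at $(\bt,\bz,\mathrm{tr}(-X_3))$; then one is reduced to estimating
$$
\tfrac12\Big[\Gx^2\mathrm{tr}(X_1)+\Gy^2\mathrm{tr}(X_2)-2\Gz^2\mathrm{tr}(-X_3)\Big]+e^{-\lambda(T-\bt)}\big[L^2(\bt,\bx,\sigma^*)+L^2(\bt,\by,\sigma^*)-2L^2(\bt,\bz,\sigma^*)\big],
$$
and the $L^2$--part is directly controlled by $C\conc$ via the characterization lemma applied to the Lipschitz semiconcave function $L^2(\bt,\cdot,\sigma^*)$. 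The real work is to test the matrix inequality against the symmetric non--negative $3d\times3d$ block matrix
$$
B=\bigl(\Gamma(\bar w_i,\sigma^*)\Gamma(\bar w_j,\sigma^*)I_d\bigr)_{1\le i,j\le 3},\qquad (\bar w_1,\bar w_2,\bar w_3)=(\bx,\by,\bz),
$$
(with an appropriate sign handling the $-X_3$ block); this produces $\mathrm{tr}(BD^2\phi)$ on the right, which must be estimated using the explicit Hessian of $\con$. Using that $\Gamma(\cdot,\sigma^*)$ is $W^{2,\infty}$, the second differences of $\Gamma^2$ at the three points scale like $\psi(\bx,\by,\bz)$ up to terms that vanish with $\gamma$, so that $\mathrm{tr}(BD^2\phi)\le CM\conc+\omega(\gamma)$.

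The hard part is precisely this last step: the $W^{2,\infty}$--regularity of $\Gamma$ must be exploited in a quantitative way compatible with the degeneracy direction $(1,1,1)$ of $D^2_{(x,y,z)}\con$ (the test function is invariant under simultaneous translations of $x,y,z$). Putting all the bounds together, \eqref{eq:needH} becomes $\le CM\conc+\omega(\gamma)$, and then choosing first $\lambda$ larger than $C$ and then $\gamma,\nu\to0$ contradicts the strict positivity of the maximum. This yields \eqref{eq:propaim} and, together with the lemma, the semiconcavity of $u$ in $x$.
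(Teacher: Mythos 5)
Your proposal is correct and follows essentially the same route as the paper: the same tripling of variables with the penalty $M\con+\gamma|\cdot|^2+\nu/t$, the same application of Proposition \ref{prop:importante} to $u(t,x)$, $u(t,y)$, $-2u(t,z)$, the same choice of optimal controls to reduce the Hamiltonian differences to fixed-control quantities, and the same test of the matrix inequality against the block matrix $\bigl(\Gamma(\bar w_i,\sigma)\Gamma(\bar w_j,\sigma)I\bigr)_{ij}$, concluding by $\lambda\gg C$ and $\gamma,\nu\to0$. The trace estimate you flag as ``the hard part'' is exactly the explicit computation the paper carries out, where the Lipschitz/$W^{2,\infty}$ character of $\Gamma$ turns $\mathrm{tr}(BD^2\phi)$ into $CM\conc+\omega(\gamma)$, so nothing essential is missing.
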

\begin{proof}
The proof readapts the ideas exploited in \cite{crandall2,crandall,ricciardi1}. We want to prove that
$$
u(t,x)+u(t,y)-2u(t,z)\le C\left(\delta+\frac 1\delta\left(|x-z|^4+|y-z|^4+|x+y-2z|^2\right)\right)\,,
$$
for a certain $C>0$ and for all $\delta>0$, $t\in(0,T]$, $x,y,z\in\R^d$.

To do that, we argue as in the Lipschitz case, following the ideas of \emph{Theorem VII.3} of \cite{ishii}. We consider the following auxiliary function:
\begin{equation}\label{eq:phiseco}
\phi(t,x,y,z)=M\seco+\gamma|x|^2+\frac\nu t\,.
\end{equation}
Due to the coercive term $\gamma|x|^2$ and the boundedness of $u$, we know that the quantity
\begin{equation}\label{eq:tripling}
u(t,x)+u(t,y)-2u(t,z)-\phi(t,x,y,z)
\end{equation}
achieves a maximum in a certain point, say $(\bt,\by,\bx,\bz)\in(0,T]\in\R^{3n}\,.$

Suppose $\bt=T$. Then we have
$$
u(t,x)+u(t,y)-2u(t,z)-\phi(t,x,y,z)\le G(\bx)+G(\by)-2G(\bz)-\phi(T,\bx,\by,\bz)\,.
$$
Since $G$ is Lipschitz and semiconcave, the right-hand side term is non-positive for $M$ sufficiently large, using \eqref{eq:semic2}. This implies
\begin{equation}\label{eq:quellochevogliamo}
u(t,x)+u(t,y)-2u(t,z)\le\phi(t,x,y,z)\,.
\end{equation}
Suppose now $\bt\in(0,T)$. If the maximum of \eqref{eq:tripling} is non-positive, then \eqref{eq:quellochevogliamo} remains true.

Now, suppose by contradiction that \eqref{eq:tripling} attains a strictly positive maximum. This implies
\begin{equation}\label{eq:nauta}
u(\bt,\bx)+u(\bt,\by)-2u(\bt,\bz)\ge M\seca
\end{equation}
and, since $u$ is Lipschitz, for a certain $C>0$ 
\begin{equation}\label{eq:elparty}
\gamma|\bx|^2\le\usb\le C\left(|\bx-\bz|+|\by-\bz|\right)\,.
\end{equation}
Since $-2u(t,x)$ is a subsolution of
$$
-z_t-\tilde{H}^2(t,x,\Delta z)-\tilde{H}^1(t,x,\nabla z)=-2F(t,x)\,,
$$
with $\tilde{H}^2(t,x,q)=-2H^2(t,x,-\miezz q)$ and $\tilde{H}^1(t,x,p)=-2H^1(t,x,-\miezz p)$, we can apply Proposition \ref{prop:importante} with
\begin{align*}
&u_1(t,x)=u(t,x)\,,\qquad u_2(t,y)=u(t,y)\,,\qquad u_3(t,z)=-2u(t,z)\,,\\
&\phi(t,x,y,z)\mbox{ as defined before},
\end{align*}
obtaining that there exist $a,b,c,X,Y,Z$ such that
\begin{align*}
&(a,\nabla_x\phi(\bt,\bx,\by,\bz),X)\in\overline{\mathcal P^{2,+}u(\bt,\bx)}\,,\qquad (b,\nabla_y\phi(\bt,\bx,\by,\bz),Y)\in\overline{\mathcal P^{2,+}u(\bt,\by)}\,,\\
&(c,\nabla_z\phi(\bt,\bx,\by,\bz),Z)\in\overline{\mathcal P^{2,+}(-2u)(\bt,\bz)}\,,
\end{align*}
and
\begin{equation}\label{eq:tuttlabel}
a+b+c=-\frac{\nu}{\bt^2}\,,\qquad\begin{pmatrix}
X & 0 & 0\\
0 & Y & 0\\
0 & 0 & Z
\end{pmatrix}\le D^2\phi(\bt,\bx,\by,\bz)\,.
\end{equation}
From now on, we will omit for the function $\phi$ its dependence on $(\bt,\bx,\by,\bz)$.

It is immediate to prove that, if $(c,\nabla_z\phi(\bt,\bx,\by,\bz),Z)\in\overline{\mathcal{P}^{2,+}(-2u)(\bt,\bz)}$, then
$$
\left(-\frac c2,\,-\miezz\nabla_z\phi(\bt,\bx,\by,\bz),\,-\miezz Z\right)\in\overline{\mathcal P^{2,-}u(\bt,\bz)}\,.
$$
Hence, since $u$ is both a subsolution and a supersolution, one has
\begin{align*}
-&a-H^2(\bt,\bx,\mathrm{tr}(X))-H^1(\bt,\bx,\nabla_x\phi)+\lambda u(\bt,\bx)\le0\,,\\
-&b-H^2(\bt,\by,\mathrm{tr}(Y))-H^1(\bt,\by,\nabla_y\phi)+\lambda u(\bt,\by)\le0\,,\\
&\frac c2-H^2\left(\bt,\bz,-\miezz\mathrm{tr}(Z)\right)-H^1\left(\bt,\bz,-\miezz\nabla_z\phi\right)+\lambda u(\bt,\bz)\ge0\,.
\end{align*}
Adding the first two inequalities and subtracting twice the third we obtain
\begin{align*}
&\frac\nu{\bt^2}+\lambda(\usb)\\
\le& H^2(\bt,\bx,\mathrm{tr}(X))+H^2(\bt,\by,\mathrm{tr}(Y))-2H^2\left(\bt,\bz,-\miezz\mathrm{tr}(Z)\right)\\
+&H^1(\bt,\bx,\nabla_x\phi)+H^1(\bt,\by,\nabla_y\phi)-2H^1\left(\bt,\bz,-\miezz\nabla_z\phi\right)\,.
\end{align*}
We have $\frac\nu{\bt^2}\ge0$. As regards the second term, with \eqref{eq:nauta} we get
\begin{equation}\label{eq:checugliazza}
\begin{split}
&\lambda M\seca\\
\le
&H^2(\bt,\bx,\mathrm{tr}(X))+H^2(\bt,\by,\mathrm{tr}(Y))-2H^2\left(\bt,\bz,-\miezz\mathrm{tr}(Z)\right)\\
+ &H^1(\bt,\bx,\nabla_x\phi)+H^1(\bt,\by,\nabla_y\phi)-2H^1\left(\bt,\bz,-\miezz\nabla_z\phi\right)\,.
\end{split}
\end{equation}
Computing the derivatives of $\phi$, one has
\begin{align*}
&\nabla_x\phi=\frac{4M}{\delta}\left(|\bx-\bz|^2(\bx-\bz)+(\bx+\by-2\bz)\right)+2\gamma\bx\,,\\
&\nabla_y\phi=\frac{4M}{\delta}\left(|\by-\bz|^2(\by-\bz)+(\bx+\by-2\bz)\right)\,,\\
&\nabla_z\phi=-\frac{4M}{\delta}\left(|\bx-\bz|^2(\bx-\bz)+|\by-\bz|^2(\by-\bz)+2(\bx+\by-2\bz)\right)\,,
\end{align*}
and so
\begin{align*}
D^2_{xx}\phi=\frac{4M}{\delta}\left(2(\bx-\bz)\otimes(\bx-\bz)+|\bx-\bz|^2I+I\right)+2\gamma I\,,\qquad D^2_{xy}\phi=\frac{4M}{\delta}I\,,\\
D^2_{yy}\phi=\frac{4M}{\delta}\left(2(\by-\bz)\otimes(\by-\bz)+|\by-\bz|^2I+I\right)\,,\\
D^2_{xz}\phi=-\frac{4M}{\delta}\left(2(\bx-\bz)\otimes(\bx-\bz)+|\bx-\bz|^2I+2I\right)\,,\\
D^2_{yz}\phi=-\frac{4M}{\delta}\left(2(\by-\bz)\otimes(\by-\bz)+|\by-\bz|^2I+2I\right)\,,\\
D^2_{zz}\phi=\frac{4M}{\delta}\left(2(\bx-\bz)\otimes(\bx-\bz)+|\bx-\bz|^2I+2(\by-\bz)\otimes(\by-\bz)+|\by-\bz|^2I+4I\right)\,.
\end{align*}
We estimate the $H^2$ terms in \eqref{eq:checugliazza}. Choosing $\sigma$ as the optimal control for $H^2(\bt,\bz,-\miezz\mathrm{tr}(Z))$, we obtain
\begin{align*}
H^2(\bt,\bx,\mathrm{tr}(X))&+H^2(\bt,\by,\mathrm{tr}(Y))-2H^2\left(\bt,\bz,-\miezz\mathrm{tr}(Z)\right)\le\frac{\Gamma(\bx,\sigma)^2}2\mathrm{tr}(X)+\frac{\Gamma(\by,\sigma)^2}{2}\mathrm{tr}(Y)\\
&+\frac{\Gamma(\bz,\sigma)^2}{2}\mathrm{tr}(Z)+e^{-\lambda(T-t)}\left(L^2(\bx,\bx,\sigma)+L^2(\bt,\by,\sigma)-2L^2(\bt,\bz,\sigma)\right)\\
&\le C\phi+\miezz\mathrm{tr}\Big(\Gamma(\bx,\sigma)^2X+\Gamma(\by,\sigma)^2Y+\Gamma(\bz,\sigma)^2Z\Big).
\end{align*}
We estimate the trace term in the following way:
\begin{align*}
&\mathrm{tr}\Big(\Gamma(\bx,\sigma)^2X+\Gamma(\by,\sigma)^2Y+\Gamma(\bz,\sigma)^2Z\Big)\\
=\,&\mathrm{tr}\left(\begin{pmatrix}
\Gx^2I & \Gx\Gy I & \Gx\Gz I\\
\Gx\Gy I& \Gy^2 I & \Gy\Gz I\\
\Gx\Gz I & \Gy\Gz I & \Gz^2 I
\end{pmatrix}\begin{pmatrix}
X & 0 & 0\\
0 & Y & 0\\
0 & 0 & Z
\end{pmatrix}\right)\\
\le\,&\mathrm{tr}\left(\begin{pmatrix}
\Gx^2I & \Gx\Gy I & \Gx\Gz I\\
\Gx\Gy I& \Gy^2 I & \Gy\Gz I\\
\Gx\Gz I & \Gy\Gz I & \Gz^2 I
\end{pmatrix}D^2\phi\right)\,,
\end{align*}
since the matrix on the left is non-negative definite and thanks to \eqref{eq:tuttlabel}.

So we obtain, with standard computations,
\begin{align*}
&\mathrm{tr}(\Gx^2 X + \Gy^2 Y+\Gz^2 Z)\\
\le\,&\frac{4M}{\delta}\big[(2+d)|\bx-\bz|^2(\Gx-\Gz)^2+(2+d)|\by-\bz|^2(\Gy-\Gz)^2\\
+\,&d(\Gx-\Gy-\Gz)^2\big]+2\gamma d\,\Gx^2\,.
\end{align*}
Using the hypotheses on $\Gamma$, we finally obtain this bound for the $H^2$ term:
$$
H^2(\bt,\bx,\mathrm{tr}(X))+H^2(\bt,\by,\mathrm{tr}(Y))-2H^2\left(\bt,\bz,-\miezz\mathrm{tr}(Z)\right)\le C\phi+C\gamma\,
$$
where $C$ depends also on $d$.

We argue in a similar way in order to bound the $H^1$ term. We have, choosing $\alpha$ as the optimal control for $H^1(\bt,\bx-\miezz\nabla_z\phi)$, and since $\nabla_z\phi=-(\nabla_x\phi+\nabla_y\phi)+2\gamma\bx\,,$
\begin{align*}
&H^1(\bt,\bx,\nabla_x\phi)+H^1(\bt,\by,\nabla_y\phi)-2H^1\left(\bt,\bz,-\miezz\nabla_z\phi\right)\le\langle b(\bx,\alpha),\nabla_x\phi\rangle+\langle b(\by,\alpha),\nabla_y\phi\rangle\\
+\,&\langle b(\bz,\alpha),\nabla_z\phi\rangle+e^{-\lambda(T-t)}\left(L^1(\bt,\bx,\alpha)+L^1(\bt,\by,\alpha)-2L^1(\bt,\bz,\alpha)\right)\\
\le\,&C\phi+\frac{4M}\delta|\bx-\bz|^2\langle b(\bx,\alpha)-b(\bz,\alpha),\bx-\bz\rangle +\frac{4M}\delta|\by-\bz|^2\langle b(\by,\alpha)-b(\bz,\alpha),\by-\bz\rangle\\
+\,&\frac{2M}{\delta}\langle b(\bx,\alpha)+b(\by,\alpha)-2b(\bz,\alpha),\bx+\by-2\bz\rangle+2\gamma\langle b(\bx,\alpha),\bx\rangle\le C\phi+C\gamma|\bx|^2\,.
\end{align*}
Because of \eqref{eq:elparty}, we have $\gamma|bx|\to0$ when $\gamma\to0$.

Putting together all the estimates, we obtain, for $M$ large enough,
$$
\lambda\phi\le C\phi+\omega(\gamma)\,,
$$
where as always $\omega(\gamma)$ is a quantity converging to $0$ when $\gamma\to0$.

Hence, for $\gamma$ small enough and $\lambda\gg1$, we obtain a contradiction.

So in each case \eqref{eq:quellochevogliamo} remains true and, letting $\gamma$ and $\nu$ go to $0$, we get
$$
u(t,x)+u(t,y)-2u(t,z)\le M\seco\,,
$$
which concludes the proof.
\end{proof}

\section{Classical solutions in a regular case}\label{sec4}
In this section we want to prove that, for a specified class of $\mathcal{C}^2$ Hamiltonians, the solution $u$ of \eqref{eq:hjb} is actually in the space $\mathcal{C}^{1+\frac\gamma 2,2+\gamma}$, for some $\gamma>0$. This estimate allows us to linearize the problem \eqref{eq:hjb} and apply the classical regularity results of linear parabolic equations in order to obtain a $\mathcal{C}^{1+\frac\alpha 2,3+\alpha}$, depending only on the coefficients $H^1$, $H^2$ and the data $G$. This last estimation will be crucial in order to obtain a $\mathcal{C}^{1+\frac\alpha 2,3+\alpha}$ solution in our framework.

To do that, we need to use the regularity results obtained by Krylov in 1983. According to \cite{krylov1}, we define a class of functions for which the $\mathcal{C}^{1+\frac\gamma 2,2+\gamma}$ regularity will hold.
\begin{defn}\label{def:krylov}
Consider a function $M:[0,T]\times\R\times\R_{>0}\times\R^{d^2}\times\R^d\times\R\to\R$ of variables $(t,x,\beta,B,\underline{p},s)$, with $B={(b_{ij})}_{ij}$ and $\underline{p}={(p_i)}_i$.

We say that $M\in\mathscr M$ if the following conditions are satisfied:
\begin{itemize}
\item [(i)] $M$ is positive homogeneous of first order, with respect to the variables $(\beta,B,\underline{p},s)\,$;
\item [(ii)] $M$ is twice continuously differentiable of first order, with respect to the variables $(\beta,B,\underline{p},s)\,$;
\item [(iii)] $\exists\,\nu>0$ such that
$$
\sum\limits_{i,j}\frac{\partial M}{\partial b_{ij}}\xi_i\xi_j\ge\nu|\xi|^2\,,\qquad\forall\,\xi\in\R^d\,;
$$
\item [(iv)] $M$ is concave with respect to ${(b_{ij})}_{ij}\,$;
\item [(v)] The second order directional derivative of $M$ with respect to $(B,\underline{p},s)$ along a vector $(B_0,\underline{p_0},s_0)$ is bounded from above by $C\beta^{-1}\big(|\underline{p_0}|^2+|s_0|^2\big)\,,$;
\item [(vi)] There exists $C>0$ such that, for all $i$ and $j\,$,
\begin{equation}\label{eq:hpkr}
|M_{b_{ij}}|+|M_{p_i}|+|M_\beta|+|M_{b_{ij}x}|+|M_{sx}|+|M_{\beta x}|\le C\,;
\end{equation}
\item [(vii)] M continuous in all variables and differentiable with respect to $t$, with
$$
|M_t(t,x,\beta,B,\underline{p},s)|+|M_{x_ix_j}(t,x,\beta,B,\underline{p},s)|\le C\sqrt{\beta^2+s^2+|\underline{p}|^2+|B|^2}\,.
$$
\end{itemize}
\end{defn}
With these hypotheses, we can state the main result we will use in order to prove the classical regularity of $u$. This is \emph{Theorem 1.1} of \cite{krylov1}.
\begin{thm}\label{thm:krylov}
Let, for $r\in\N$, $\{(M_r)\}_r$ a sequence of functions such that $M_r\in\mathscr M$ $\forall r$. Let moreover $\phi\in\mathcal C^{2+\alpha}$, for a certain $0<\alpha<1$. Then, if we define $M=\inf\limits_r M_r$, we have that the problem
$$
\left\{
\begin{array}{lr}
u_t-M(t,x,1,D^2u,Du,u)=0\,,\qquad& (t,x)\in(0,T)\times\R^d\,,\\
u(0,x)=\phi(x)\,, & x\in\R^d\,,
\end{array}
\right.
$$
admits a unique solution $u\in\mathcal C^{2+\gamma}([0,T]\times\R^d)$, where $\gamma\in(0,1)\,$.
\end{thm}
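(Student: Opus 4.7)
This is Krylov's interior regularity theorem for concave fully nonlinear parabolic equations of Bellman type (the infimum structure forces concavity of $M$ in the second-order variable $B$). My plan has three main movements: approximate by smooth operators, derive uniform a priori estimates, and pass to the limit.

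First I would replace $M=\inf_r M_r$ by a smooth approximation. Hypothesis (ii) gives each $M_r\in\mathcal C^2$, but the infimum need not be. A standard remedy is to mollify: choose a $\mathcal C^2$ approximation $M^\eps$ (via a smooth min, a sup-convolution, or a partition-of-unity construction on the parameter $r$) satisfying bounds (i)--(vii) uniformly in $\eps$. Crucially, uniform ellipticity (iii) and concavity in $B$ (iv) survive the approximation. For such smooth $M^\eps$ and smooth $\phi$, the Cauchy problem admits a classical solution $u^\eps$ on each bounded domain with suitable boundary data by the standard theory of quasilinear parabolic equations or the continuity method; the whole-space solution is then assembled by an exhaustion argument, once uniform estimates on growing balls are available.

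The heart of the argument is a chain of a priori estimates independent of $\eps$. A uniform $L^\infty$ bound comes from the comparison principle together with homogeneity (i) and the control of $M^\eps(t,x,1,0,0,0)$ provided by (vi) and (vii). Differentiating the equation and applying Bernstein-type arguments, aided by (v)--(vi), gives a uniform Lipschitz bound in $(t,x)$. The uniform ellipticity (iii) combined with the Krylov--Safonov Harnack inequality then upgrades this to a uniform $\mathcal C^{1,\alpha_0}$ bound. The decisive step is the Evans--Krylov estimate: exploiting the concavity (iv), one applies the weak Harnack inequality to second-order incremental quotients of $u^\eps$ and obtains, after absorbing the $(t,x,\beta,\underline p,s)$-dependence perturbatively via (v)--(vii), a uniform parabolic H\"older modulus for $D^2 u^\eps$. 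This is the main technical obstacle: the classical Evans--Krylov argument is clean for operators of the form $F(D^2u)=0$, and its extension to the present setting with full $(t,x,u,Du)$-dependence requires careful tracking of lower-order perturbations using the quantitative bounds in Definition~\ref{def:krylov}.

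Finally, the uniform $\mathcal C^{1+\gamma/2,2+\gamma}_{\mathrm{loc}}$ bounds yield, via Arzel\`a--Ascoli, a subsequential limit $u^\eps\to u$ in $\mathcal C^{1+\gamma'/2,2+\gamma'}_{\mathrm{loc}}$ for every $\gamma'<\gamma$. Closure of H\"older classes under locally uniform convergence preserves the $\mathcal C^{2+\gamma}$ regularity, and stability of (classical or viscosity) solutions under the convergence $M^\eps\to M$ ensures that $u$ solves the original Cauchy problem. Uniqueness is immediate from the comparison principle for uniformly parabolic operators, which applies in this concave setting.
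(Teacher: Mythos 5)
The paper does not actually prove this statement: it is quoted verbatim as \emph{Theorem 1.1} of Krylov \cite{krylov1} and used as a black box, so there is no internal proof to compare yours against. On its own terms, your outline follows the standard route behind results of this type (smooth approximation of the Bellman operator, comparison and $L^\infty$ bounds, Bernstein/Lipschitz estimates, Krylov--Safonov, an Evans--Krylov step exploiting concavity in $B$, then Arzel\`a--Ascoli and stability), and that is indeed the right family of ideas. But as a proof it has genuine gaps rather than merely compressed details. The decisive $\mathcal C^{2+\gamma}$ estimate in the presence of the full $(t,x,\beta,\underline{p},s)$-dependence is precisely the content of Krylov's theorem --- conditions (v)--(vii) of Definition \ref{def:krylov} exist exactly to make that perturbative absorption work --- and you explicitly set it aside as ``the main technical obstacle'' without carrying it out; at that point the argument becomes a citation in disguise rather than a proof.

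Two further points would need repair even at the level of the outline. First, the statement is a Cauchy problem on all of $\R^d$ with $\phi\in\mathcal C^{2+\alpha}$, and the conclusion is $u\in\mathcal C^{2+\gamma}([0,T]\times\R^d)$, i.e.\ global in space and up to $t=0$; interior Krylov--Safonov and Evans--Krylov estimates plus an exhaustion by balls do not by themselves give uniformity in $x$ over the whole space or the estimate near the initial time, so you need global (in $x$) versions of each step and an initial-layer estimate using $\phi\in\mathcal C^{2+\alpha}$. Second, your smoothing of $M=\inf_r M_r$ is asserted, not constructed: a generic smooth min or mollification in the $(\beta,B,\underline{p},s)$ variables destroys the positive homogeneity (i) and need not preserve the quantitative bounds (v)--(vii) uniformly, while mollifying only in $(t,x)$ does not remove the non-smoothness coming from the infimum. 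Krylov's own argument works with the Bellman structure directly (estimates uniform over finite infima of the $M_r$, which are the objects that are actually smooth), and some such device is needed here; as written, the approximation step is the second place where the proof would not go through without new ideas.
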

Now we are ready to prove that our problem \eqref{eq:hjb} admits a classical solution.
\begin{thm}\label{thm:appliedkrylov}
Let $H^1$ and $H^2$ be differentiable with respect to $t$ and twice continuously differentiable with respect to the other variables. Let, moreover, $G\in\mathcal C^{2+\alpha}(\R^d)$, for a certain $0<\alpha<1$.

Furthermore, we suppose that $H^2$ and $H^1$ are concave in the last variable, and
\begin{equation}\label{eq:hp0}
H^2_q(t,x,q)\ge\nu\qquad\forall(t,x,q)\in[0,T]\times\R^d\times\R\mbox{ and for a certain }\nu>0\,.
\end{equation}
Finally, we require the following assumptions: $\exists\,C>0$ such that
\begin{gather}
\norminf{H^1(\cdot,\cdot,0)}+\norminf{H^2(\cdot,\cdot,0)}\le C\,,\label{eq:hp1}\\
\norminf{H^1_p(\cdot,\cdot,\cdot)}+\norminf{H^2_q(\cdot,\cdot,\cdot)}\le C\,,\label{eq:hp2}\\
\norminf{H^2_{qx}(\cdot,\cdot,\cdot)}\le C\,,\label{eq:hp3}\\
H^1_p(t,x,p)\cdot p-H^1(t,x,p)\ge-C\,,\qquad H^2_q(t,x,q)\cdot q-H^2(t,x,q)\ge-C\,,\label{eq:hp4}\\
|H^1_{px_i}(t,x,p)\cdot p-H^1_{x_i}(t,x,p)|\le C\,,\qquad |H^2_{qx_i}(t,x,q)\cdot q-H^2_{x_i}(t,x,q)|\le C\,,\label{eq:hp5}\\
\norminf{H^1_{xx}(\cdot,\cdot,p)}+\norminf{H^1_t(\cdot,\cdot,p)}\le C(1+|p|)\,,\label{eq:hp6}\\
\norminf{H^2_{xx}(\cdot,\cdot,q)}+\norminf{H^2_t(\cdot,\cdot,q)}\le C(1+|q|)\,.\label{eq:hp7}
\end{gather}
Then, if $u$ is the unique solution of \eqref{eq:hjb}, we have $u\in\mathcal C^{1+\frac\gamma 2,2+\gamma}([0,T]\times\R^d)$, for a certain $0<\gamma<1$.
\end{thm}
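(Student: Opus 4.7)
The strategy is to recast the fully nonlinear equation \eqref{eq:hjb} into the Bellman-type framework required by Krylov's Theorem \ref{thm:krylov}. First I would reverse time by setting $v(t,x):=u(T-t,x)$, so that $v$ solves the forward Cauchy problem
\begin{equation*}
v_t-H^2(T-t,x,\Delta v)-H^1(T-t,x,\nabla v)=0,\qquad v(0,x)=G(x),
\end{equation*}
which has the shape $v_t-M(t,x,1,D^2v,Dv,v)=0$ needed to invoke Theorem \ref{thm:krylov}. The existence of a classical solution to this reformulated problem will, by comparison (Theorem \ref{thm:comparison}), coincide with the viscosity solution of \eqref{eq:hjb}, yielding the desired regularity for $u$.

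The key step is producing the Bellman-type representation of the right-hand side. Since $H^1$ and $H^2$ are concave in their last argument, they coincide with the infimum of their affine tangent supports:
\begin{equation*}
H^1(t,x,p)=\inf_{p_0\in\R^d}\bigl\{H^1_p(t,x,p_0)\cdot p+\Psi^1(t,x,p_0)\bigr\},\quad \Psi^1:=H^1-H^1_p\cdot p_0,
\end{equation*}
\begin{equation*}
H^2(t,x,q)=\inf_{q_0\in\R}\bigl\{H^2_q(t,x,q_0)\,q+\Psi^2(t,x,q_0)\bigr\},\quad \Psi^2:=H^2-H^2_q\, q_0.
\end{equation*}
Picking a countable dense set $\{(p_0^r,q_0^r)\}_{r\in\N}$ and using continuity of the coefficients to reduce the infimum to this countable index, I would define, in the homogenized variables of Definition \ref{def:krylov},
\begin{equation*}
M_r(t,x,\beta,B,\underline p,s):=H^2_q(T-t,x,q_0^r)\,\mathrm{tr}(B)+H^1_p(T-t,x,p_0^r)\cdot\underline p+\beta\bigl[\Psi^2(T-t,x,q_0^r)+\Psi^1(T-t,x,p_0^r)\bigr],
\end{equation*}
so that $M:=\inf_r M_r$ reproduces $H^1+H^2$ when $\beta=1$.

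The heart of the proof is the verification $M_r\in\mathscr M$ for every $r$. Linearity of $M_r$ in $(\beta,B,\underline p,s)$ makes items (i), (ii), (iv), (v) immediate; (iii) is exactly \eqref{eq:hp0}; the boundedness of $M_{b_{ij}}$ and $M_{p_i}$ in (vi) follows from \eqref{eq:hp2}, that of $M_{b_{ij}x}$ from \eqref{eq:hp3}, and $M_{sx}\equiv 0$ trivially. The delicate clauses are the bound on $M_\beta$ and on $M_{\beta x}$: these amount to controlling $\Psi^i$ and $\partial_x\Psi^i$ uniformly in the tangent-plane parameter, which is precisely what \eqref{eq:hp1} together with \eqref{eq:hp4} (for $\Psi^i$) and \eqref{eq:hp5} (for $\partial_x\Psi^i$) are designed to give. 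Condition (vii), i.e.\ the growth bounds on $M_t$ and $M_{xx}$, comes from \eqref{eq:hp6}--\eqref{eq:hp7}, since differentiating $H^i_q(T-t,x,q_0^r)$ in $t$ and $x$ twice reproduces the same type of coefficients.

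Once $M_r\in\mathscr M$ is established, Theorem \ref{thm:krylov} provides a classical solution $\tilde v\in\mathcal C^{1+\gamma/2,2+\gamma}([0,T]\times\R^d)$ for some $\gamma\in(0,1)$; since $\tilde v$ is in particular a bounded viscosity solution of the forward PDE, Theorem \ref{thm:comparison} identifies it with the (time-reversed) unique viscosity solution of \eqref{eq:hjb}, concluding the argument. I expect the main obstacle to be condition (vi) for the $\beta$-slot: a naive Legendre representation of a concave function produces tangent-plane constants $\Psi^i$ that are generally unbounded as $|p_0|,|q_0|\to\infty$, so one must use the hypothesis \eqref{eq:hp4} (which says exactly that $\Psi^i$ is bounded from above, while concavity and \eqref{eq:hp1} handle the lower bound) and \eqref{eq:hp5} (which gives $\partial_x\Psi^i$ bounded) to keep the family $\{M_r\}$ uniformly within $\mathscr M$.
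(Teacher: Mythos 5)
Your overall strategy (time reversal, invoking Theorem \ref{thm:krylov}, then identifying the classical solution with the viscosity solution via comparison) is sound, but the Bellman-type linearization you propose has a genuine gap at condition (vii) of Definition \ref{def:krylov}. Your $M_r$ freezes the last-variable argument at $(p_0^r,q_0^r)$, so verifying (vii) requires differentiating the coefficients $H^2_q(T-t,x,q_0^r)$, $H^1_p(T-t,x,p_0^r)$ and the tangent constants $\Psi^1,\Psi^2$ once in $t$ and twice in $x$. This produces the mixed and third derivatives $H^2_{qt}$, $H^1_{pt}$, $H^2_{qx_ix_j}$, $H^1_{px_ix_j}$, which the hypotheses of the theorem neither provide nor bound: $H^1,H^2$ are only assumed differentiable in $t$ and twice continuously differentiable in the remaining variables, so $M_r$ need not even be differentiable in $t$. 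Moreover, the terms that do exist, $H^i_t(\cdot,\cdot,q_0^r)$ and $H^i_{xx}(\cdot,\cdot,q_0^r)$, are controlled by \eqref{eq:hp6}--\eqref{eq:hp7} only through $C(1+|q_0^r|)$ (resp.\ $C(1+|p_0^r|)$), which blows up along your dense family and cannot be absorbed into the required bound $C\sqrt{\beta^2+s^2+|\underline{p}|^2+|B|^2}$ (take $\beta=1$, $B=0$, $\underline{p}=0$ and $|q_0^r|$ large). Since the $\mathcal C^{2+\gamma}$ estimate of Krylov depends on the constants $\nu,C$ of Definition \ref{def:krylov}, these must be uniform over the family, and your $\{M_r\}$ fails this. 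Your treatment of (vi) --- upper bound on $\Psi^i$ from \eqref{eq:hp4}, lower bound from concavity plus \eqref{eq:hp1}, and $\partial_x\Psi^i$ from \eqref{eq:hp5} --- is correct, but it does not repair (vii).

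The paper avoids the tangent-plane decomposition altogether: it takes the constant family $M_r\equiv M$ with $M(t,x,\beta,B,\underline{p},s)=\beta H^2(t,x,\beta^{-1}\mathrm{tr}(B))+\beta H^1(t,x,\beta^{-1}\underline{p})$. Then concavity of $H^1,H^2$ in the last variable gives (iv) and (v) directly, \eqref{eq:hp0} gives (iii), and the Euler-type quantities in \eqref{eq:hp1}, \eqref{eq:hp4}, \eqref{eq:hp5} bound $M_\beta$ and $M_{\beta x}$ exactly as you bounded $\Psi^i$ and $\partial_x\Psi^i$. Crucially, since the last-variable argument of $H^1,H^2$ is kept at $\beta^{-1}\mathrm{tr}(B)$, $\beta^{-1}\underline{p}$, differentiating $M$ in $t$ or twice in $x$ only produces $H^i_t$ and $H^i_{xx}$ at those arguments, and \eqref{eq:hp6}--\eqref{eq:hp7} yield precisely $|M_t|+|M_{x_ix_j}|\le C(\beta+|\mathrm{tr}(B)|+|\underline{p}|)\le C\sqrt{\beta^2+|\underline{p}|^2+|B|^2}$, i.e.\ condition (vii) with constants independent of any auxiliary parameter. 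If you replace your linearized family by this single homogenized $M$, the remainder of your argument (time reversal and identification with the viscosity solution) goes through.
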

\begin{proof}
We define $\forall r$ the function $M_r=M$, where
$$
M(t,x,\beta,B,\underline{p},s)=\beta H^2(t,x,\beta^{-1}\mathrm{tr}(B))+\beta H^1(t,x,\beta^{-1}\underline{p})\,.
$$
We note that this definition is well-posed, since $\beta\in R>0$, and we have
$$
M(t,x,1,D^2 u,Du, u)=H^2(t,x,\Delta u)+H^1(t,x,Du)\,.
$$
So, if we can apply Theorem \ref{thm:krylov}, we will obtain a $\mathcal C^{1+\frac\gamma 2,2+\gamma}$ for \eqref{eq:hjb}, and we will have finished.

We only have to check that the conditions in Definition \ref{def:krylov} are satisfied by $M$.
\begin{itemize}
\item [\emph{(i), (ii)}] The positive homogeneity with respect to $(\beta, B,\underline{p})$ and the twice continuously differentiability with respect to $(x,\beta,B,\underline{p})$ are immediate consequences of the definition of $M$ and the hypotheses of regularity for $H^1$, $H^2$;\\
\item [\emph{(iii)}] Since $M$ depends on $b_{ij}$ only when $i=j$, we have $\forall\xi\in\R^d$
$$
\sum\limits_{i,j}\frac{\partial M}{\partial b_{ij}}\xi_i\xi_j=\sum\limits_i\beta\beta^{-1}H^2_q(t,x,\beta^{-1}\mathrm{tr}(B))\xi_i^2\ge\nu|\xi|^2\,;
$$
\item [\emph{(iv)}] Since $\beta>0$, the concavity of $M$ with respect to $(b_{ij})$ is a direct consequence of the concavity of $H^2$ with respect to $q$. Actually, we have
$$
\frac{\partial^2 M}{\partial b_{ij}^2}=\beta\beta^{-2}H^2_{qq}(t,x,\beta^{-1}\mathrm{tr}(B))\le 0\,;
$$
\item [\emph{(v)}] Take a vector $(B_0,\underline{p_0})$ and define the function $\phi(r)=M(t,x,\beta,B+rB_0,\underline{p}+r\underline{p_0}).$ We have to prove that
$$
\phi''(0)\le C\beta^{-1}|\underline{p_0}|^2\,.
$$
A straightforward computation shows that
$$
\phi''(0)=\beta^{-1}\left[H^2_{qq}(t,x,\beta^{-1}\mathrm{tr}(B))\mathrm{tr}(B_0)^2+\langle H^1_{pp}(t,x,\beta^{-1}\underline{p})\underline{p_0},\underline{p_0}\rangle\right]\,.
$$
Using the concavity of $H^2$ and the semiconcavity of $H^1$ with respect to the last variable, we are done;
\item [\emph{(vi)}] We analyze each term of \eqref{eq:hpkr}. As regards $M_{b_{ij}}$ and $M_{p_i}$, we have for \eqref{eq:hp2}
$$
|M_{b_{ij}}|+|M_{p_i}|=|H^2_q(t,x,\beta^{-1}\mathrm{tr}(B))|+|H^1_p(t,x,\beta^{-1}\underline{p})|\le C\,.
$$
For $M_\beta$, we compute the derivative and we obtain
\begin{align*}
M_\beta &= H^2(t,x,\beta^{-1}\mathrm{tr}(B))-\beta^{-1}\mathrm{tr}(B)H^2_q(t,x,\beta^{-1}\mathrm{tr}(B))\\
&+ H^1(t,x,\beta^{-1}\underline{p})-\beta^{-1}\underline{p}H^1_p(t,x,\beta^{-1}\underline{p})\,.
\end{align*}
Hence, the concavity of $H^2$ and $H^1$, with conditions \eqref{eq:hp1} and \eqref{eq:hp4}, easily allow to obtain a bound for $|M_\beta|$. Similar and easier computations are made in order to bound $|M_{b_{ij}x}|$ and $|M_{\beta x}|$, using \eqref{eq:hp3} and \eqref{eq:hp5}.
\item [\emph{(vii)}] The continuity of $M$ follows from the continuity of $H^1$ and $H^2$. We prove only the estimate for $|M_{x_ix_j}|$, since the $|M_t|$ goes along the same computations. We have
\begin{align*}
|M_{x_i x_j}| &\le |\beta H^2_{x_ix_j}(t,x,\beta^{-1}\mathrm{tr}(B))|+|\beta H^1_{x_i x_j}(t,x,\beta^{-1}\underline{p})|\\
&\le C\big(|\mathrm{tr}(B)|+|\underline{p}|+\beta\big)\le C\sqrt{\beta^2+|\underline{p}|^2+\norm{B}^2}\,,
\end{align*}
where we used \eqref{eq:hp6}, \eqref{eq:hp7}.
\end{itemize}
Then $M\in\mathscr M$ and we are allowed to apply Theorem \ref{thm:krylov}. This concludes the proof.
\end{proof}
Since $u$ is a classical solution, we are allowed to linearize problem \eqref{eq:hjb}. Actually, we get that $u$ satisfies
\begin{equation}\label{eq:linhjb}
\begin{cases}
-u_t-V(t,x)\Delta u-Z(t,x) Du=b(t,x)\,,\\
u(T,x)=G(x)\,,
\end{cases}
\end{equation}
where
\begin{align*}
&V(t,x)=\int_0^1 H^2_q(t,x,\lambda\Delta u(t,x))\,d\lambda\,,\qquad\quad Z(t,x)=\int_0^1 H^1_p(t,x,\lambda Du(t,x))\,d\lambda\,,\\
&b(t,x)=H^2(t,x,0)+H^1(t,x,0)\,.
\end{align*}
Now we can use the linear character of the equation \eqref{eq:linhjb} in order to improve the regularity of $u$. The Theorem is the following:
\begin{thm}
Suppose the hypotheses of Theorem \ref{thm:appliedkrylov} are satisfied. Furthermore, assume that, for some $k>0$ and $C>0$,
\begin{equation}\label{eq:hp12}
|H^2_x(t,x,q)|+|H^1_x(t,x,p)|\le C(1+|p|)\,,\qquad\forall t\in[0,T]\,,\,\,\forall x\in\R^d\,.
\end{equation}
Moreover, for a certain $0<\alpha<1$, we require that $\forall L>0$ $\exists C_L>0$ such that $\forall |p|\le L$, $|q|\le L$ it holds
\begin{equation}\label{eq:hp13}
\norm{H^2_q(\cdot,\cdot,q)}_{\frac\alpha 2,\alpha}+\norm{H^1_p(\cdot,\cdot,p)}_{\frac\alpha 2,\alpha}+\norm{H^2_x(\cdot,\cdot,q)}_{\frac\alpha 2,\alpha}+\norm{H^1_x(\cdot,\cdot,p)}_{\frac\alpha 2,\alpha}\le C_L\,,
\end{equation}
and all the derivatives $H^2_q$, $H^1_p$, $H^2_x$, $H^1_x$ are Lipschitz in the last variable.

Finally, suppose that $G\in\mathcal C^{3+\alpha}$, with
\begin{equation}\label{eq:hp14}
\norm{G}_{3+\alpha}\le C\,.
\end{equation}
Then, the solution $u$ of \eqref{eq:hjb} satisfies the following estimate:
\begin{equation}\label{eq:4.15}
\norm{u}_{1+\frac\alpha 2,3+\alpha}\le C\,,
\end{equation}
where $C$ depends on $H^2$, $H^1$, $G$, $T$, $d$.
\end{thm}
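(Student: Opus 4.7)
The plan is to view \eqref{eq:linhjb} as a genuine linear parabolic equation with H\"older coefficients, then differentiate the HJB equation once in space to obtain a linear equation for $v = u_{x_i}$, and apply classical parabolic Schauder estimates twice: a first application to reach $u \in \mathcal{C}^{1+\frac\gamma 2, 3+\gamma}$, and then, after the coefficients have been upgraded through this improved regularity, a second application delivering the target $\mathcal{C}^{1+\frac\alpha 2, 2+\alpha}$ regularity for $v$, which is \eqref{eq:4.15}.

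First I would start from Theorem \ref{thm:appliedkrylov}, which gives $u \in \mathcal{C}^{1+\frac\gamma 2, 2+\gamma}$ and in particular $Du$, $\Delta u$ globally bounded by some $L$. This makes the linearization \eqref{eq:linhjb} a bona fide linear parabolic equation with $V$ uniformly elliptic by \eqref{eq:hp0} and bounded by \eqref{eq:hp2}. Combining \eqref{eq:hp13} (which yields $\mathcal{C}^{\frac\alpha 2, \alpha}$ regularity in $(t,x)$ of $H^2_q, H^1_p, H^2_x, H^1_x$ on $\{|p|,|q|\le L\}$) with their assumed Lipschitz dependence on the last argument, the compositions $H^2_q(\cdot, \cdot, \Delta u)$ and $H^1_p(\cdot, \cdot, Du)$ inherit $\mathcal{C}^{\frac\gamma 2, \gamma}$ regularity, so $V, Z \in \mathcal{C}^{\frac\gamma 2, \gamma}$.

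Next I would formally differentiate \eqref{eq:hjb} with respect to $x_i$, which is justified since $u \in \mathcal{C}^{2+\gamma}$. Writing $v := u_{x_i}$, one obtains the linear parabolic equation
\begin{equation*}
-v_t - H^2_q(t, x, \Delta u)\, \Delta v - H^1_p(t, x, Du) \cdot Dv = H^2_{x_i}(t, x, \Delta u) + H^1_{x_i}(t, x, Du),
\end{equation*}
with terminal datum $v(T, \cdot) = G_{x_i} \in \mathcal{C}^{2+\alpha}$ by \eqref{eq:hp14}. All coefficients and the right-hand side lie in $\mathcal{C}^{\frac\gamma 2, \gamma}$ by the same composition argument, using \eqref{eq:hp12}--\eqref{eq:hp13}, and uniform ellipticity is preserved. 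Parabolic Schauder on $\mathbb{R}^d$ then yields $v \in \mathcal{C}^{1+\frac\gamma 2, 2+\gamma}$ for every $i$, hence $u \in \mathcal{C}^{1+\frac\gamma 2, 3+\gamma}$.

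At this point I would bootstrap. The new regularity implies that $Du$ and $\Delta u$ are Lipschitz in space and H\"older in time of exponent at least $\frac{1+\gamma}{2} > \frac\alpha 2$. Combined with the $\mathcal{C}^{\frac\alpha 2, \alpha}$ regularity in $(t,x)$ and Lipschitz-in-last-variable character of $H^2_q, H^1_p, H^2_x, H^1_x$, the coefficients and forcing of the equation for $v$ now lie in $\mathcal{C}^{\frac\alpha 2, \alpha}$. A second Schauder application, with terminal datum $G_{x_i} \in \mathcal{C}^{2+\alpha}$, gives $v \in \mathcal{C}^{1+\frac\alpha 2, 2+\alpha}$, i.e., $u \in \mathcal{C}^{1+\frac\alpha 2, 3+\alpha}$, which is precisely \eqref{eq:4.15}.

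The main technical point is tracking the H\"older regularity of the composite functions $H^2_q(\cdot, \cdot, \Delta u)$, $H^1_{x_i}(\cdot, \cdot, Du)$, etc., at each stage of the bootstrap. The reason the scheme closes is that the hypotheses decouple the $(t, x)$-regularity (fixed at $\alpha$) from the regularity in the last argument (merely Lipschitz): as soon as the first bootstrap upgrades $\Delta u$ and $Du$ to Lipschitz in space, the full $\alpha$-H\"older regularity in $(t, x)$ of the Hamiltonians' derivatives transfers to all coefficients of the linearized equation, enabling the second Schauder step.
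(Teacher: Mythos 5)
Your proof is correct, and its backbone (differentiate the HJB equation in $x_i$, then run a Schauder bootstrap on the resulting linear equation for $v=u_{x_i}$) is the same as the paper's; the difference lies in how the bootstrap is started and in one intermediate step. You feed the quantitative Krylov bound from Theorem \ref{thm:appliedkrylov} directly into the coefficients: since $\Delta u, Du\in\mathcal C^{\frac\gamma2,\gamma}$, the compositions $H^2_q(\cdot,\cdot,\Delta u)$, $H^1_p(\cdot,\cdot,Du)$, $H^2_{x_i}(\cdot,\cdot,\Delta u)$, $H^1_{x_i}(\cdot,\cdot,Du)$ are H\"older with exponent $\min(\gamma,\alpha)$ (a relabeling you should make explicit, since the Krylov exponent $\gamma$ need not dominate $\alpha$), one Schauder application gives $u\in\mathcal C^{1+\frac\beta2,3+\beta}$, and the improved space-Lipschitz/time-H\"older control of $Du,\Delta u$ upgrades the coefficients to $\mathcal C^{\frac\alpha2,\alpha}$ for the second Schauder pass. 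The paper instead never uses the H\"older seminorm of $D^2u$ coming from Krylov: it first applies a low-regularity LSU estimate to \eqref{eq:4.16} with merely bounded coefficients and bounded right-hand side (boundedness of $Du$ being obtained as in Theorem \ref{thm:Lipsc}, and \eqref{eq:hp12} for the forcing) to get $\norm{v}_{\frac{1+\alpha}2,1+\alpha}\le C$, then passes through the linearized equation \eqref{eq:linhjb} to obtain $\norm{u}_{1+\frac\alpha2,2+\alpha}\le C$, and only then applies Schauder to \eqref{eq:4.16}. The practical difference is in how the final constant is controlled: your route needs Krylov's theorem in its quantitative form (both the exponent $\gamma$ and the $\mathcal C^{1+\frac\gamma2,2+\gamma}$ bound depending only on the structural constants), which is true in Krylov's original result but not explicit in the paper's statement of Theorem \ref{thm:krylov}, and this uniformity matters later when the estimate \eqref{eq:4.15} is used uniformly along the approximations in Proposition \ref{prop:prop}; the paper's detour through bounded-coefficient estimates makes that dependence transparent at the cost of one extra step. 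With the quantitative Krylov estimate invoked explicitly and the $\min(\gamma,\alpha)$ bookkeeping fixed, your argument closes and yields \eqref{eq:4.15}.
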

\begin{proof}
We consider, for $1\le i\le d$, the function $v:=u_{x_i}$.
Differentiating the equation \eqref{eq:hjb} with respect to $x_i$, we obtain for the function $v$
\begin{equation}\label{eq:4.16}
\begin{cases}
-v_t-H^2_q(t,x,\Delta u)\Delta v-H^1_p(t,x,Du)\cdot Dv=H^2_{x_i}(t,x,\Delta u)+H^1_{x_i}(t,x,Du)\,,\\
v(T,x)=G_{x_i}(x)\,.
\end{cases}
\end{equation}
Thanks to the hypotheses, the coefficients $H^2_q$, $H^1_p$ are in $L^\infty$ and $G_{x_i}\in\mathcal C^{2+\alpha}$. Moreover, with the same ideas of Theorem \ref{thm:Lipsc} (see also \cite{PorrettaPriola}), we have that $|Du|$ is globally bounded in $L^\infty$, which implies, thanks to \eqref{eq:hp12},
$$
H^2_{x_i}(t,x,\Delta u)+H^1_{x_i}(t,x,Du)\in L^\infty.
$$
With these hypotheses we know from standard regularity results (see e.g. \cite{lsu}) that $\norm{v}_{\frac{1+\alpha}{2},1+\alpha}\le C\,$, where $C$ depends on $G$, $H^2$, $H^1$.

Coming back to the linear equation \eqref{eq:linhjb} satisfied by $u$, we get that
$$
V,Z,b\in\mathcal{C}^{\frac\alpha 2,\alpha},\qquad G\in\mathcal C^{2+\alpha}\,.
$$
Hence, \emph{Theorem IV.5.1} of \cite{lsu} implies that $u\in\mathcal C^{1+\frac\alpha 2,2+\alpha}$ and $\norm{u}_{1+\frac\alpha 2,2+\alpha}\le C$.

Now, the coefficients of the equation \eqref{eq:4.16} are in $\mathcal C^{\frac\alpha 2,\alpha}$ and $G_{x_i}\in\mathcal{C}^{2+\alpha}.$ Applying again \emph{Theorem IV.5.1} of \cite{lsu} we obtain
$$
\norm{v}_{1+\frac\alpha 2,2+\alpha}\le C\implies \norm{u}_{1+\frac\alpha 2,3+\alpha}\le C\,,
$$
which concludes the proof.
\end{proof}

\section{Regular solutions for the Bellman operator}\label{sec5}
It is not obvious at all to have in our examples $\mathcal C^2$ Hamiltonian in all variables. In order to handle this problem, we introduce the following Proposition.
\begin{prop}\label{prop:prop}
Let $H^2$ and $H^1$ be almost everywhere twice differentiable in the space and in the last variable, and almost everywhere differentiable in the time variable, and suppose $H^1$ and $H^2$ be concave in the last variable and satisfying estimates from \eqref{eq:hp0} to \eqref{eq:hp7} and from \eqref{eq:hp12} and \eqref{eq:hp14}, where pointwise estimates have to be intended almost everywhere.

Then the unique solution $u$ of \eqref{eq:hjb} satisfies
\begin{equation}\label{eq:5.1}
\norm{u}_{1+\frac\alpha 2,3+\alpha}\le C\,.
\end{equation}
\end{prop}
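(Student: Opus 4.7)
The natural strategy is regularization by mollification, followed by stability of the $\mathcal{C}^{1+\alpha/2,3+\alpha}$ estimate under the approximation. Let $\rho_\varepsilon$ denote a standard non-negative mollifier on $\R\times\R^d\times\R$ (respectively $\R\times\R^d\times\R^d$), and set
\[
H^2_\varepsilon := H^2*\rho_\varepsilon, \qquad H^1_\varepsilon := H^1*\rho_\varepsilon,
\]
after extending $H^1,H^2$ in $t$ by constancy outside $[0,T]$. Then $H^1_\varepsilon,H^2_\varepsilon$ are $\mathcal{C}^\infty$ in all variables.

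The first main step is to show that the hypotheses \eqref{eq:hp0}--\eqref{eq:hp7} and \eqref{eq:hp12}--\eqref{eq:hp14} are preserved by mollification with constants independent of $\varepsilon\in(0,1]$. The pointwise $L^\infty$ bounds on derivatives (e.g.\ \eqref{eq:hp1}--\eqref{eq:hp3}) are inherited immediately by Young's inequality. Concavity in the last variable is preserved since convolution with a non-negative kernel is an average of translates of a concave function, and the ellipticity bound \eqref{eq:hp0} transfers as $(H^2_\varepsilon)_q=(H^2_q)*\rho_\varepsilon\ge\nu$. The one-sided inequalities \eqref{eq:hp4}, \eqref{eq:hp5} require a short computation: writing
\[
(H^1_\varepsilon)_p(t,x,p)\cdot p-H^1_\varepsilon(t,x,p)=\int \bigl[H^1_p(t',x',p')\cdot p'-H^1(t',x',p')\bigr]\rho_\varepsilon\,d(t',x',p')+R_\varepsilon,
\]
where the remainder $R_\varepsilon=\int H^1_p(t',x',p')\cdot(p-p')\rho_\varepsilon$ is $O(\varepsilon)$ by \eqref{eq:hp2}. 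The same trick handles \eqref{eq:hp5}. Finally, the growth conditions \eqref{eq:hp6}, \eqref{eq:hp7}, \eqref{eq:hp12}, \eqref{eq:hp13} are of a form stable under mollification.

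With these uniform estimates in place, the second step is to apply the previous theorem to obtain a classical solution $u_\varepsilon$ of \eqref{eq:hjb} with $H^1,H^2$ replaced by $H^1_\varepsilon,H^2_\varepsilon$, and to conclude
\[
\|u_\varepsilon\|_{1+\frac{\alpha}{2},3+\alpha}\le C
\]
uniformly in $\varepsilon$. By Ascoli--Arzelà applied on every compact subset of $[0,T]\times\R^d$, up to a subsequence $u_\varepsilon\to u_*$ in $\mathcal{C}^{1,2}_{loc}$, and the limit inherits the bound \eqref{eq:5.1} by lower semi-continuity of the H\"older seminorms.

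The last step is to identify $u_*$ with the unique viscosity solution $u$ of \eqref{eq:hjb} provided by Theorem \ref{thm:existence}. Since the a.e.\ differentiability hypotheses combined with the $L^\infty$ bounds on derivatives force $H^1,H^2$ to be Lipschitz, $H^i_\varepsilon\to H^i$ locally uniformly; since $u_\varepsilon\to u_*$ in $\mathcal{C}^{1,2}_{loc}$, $u_*$ solves the original HJB equation pointwise a.e., hence in the viscosity sense. By the comparison principle, Theorem \ref{thm:comparison}, $u_*=u$, and \eqref{eq:5.1} follows. The delicate point, and the main technical obstacle, is the uniform preservation of the structural inequalities \eqref{eq:hp0}, \eqref{eq:hp4}, \eqref{eq:hp5} and of concavity under mollification; the rest is a rather standard stability argument for viscosity solutions.
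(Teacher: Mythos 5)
Your proposal is correct and follows essentially the same route as the paper: mollify $H^1,H^2$, check that the structural hypotheses (in particular the one-sided inequalities \eqref{eq:hp4}--\eqref{eq:hp5}, verified by exactly the splitting trick you use, with the remainder controlled via \eqref{eq:hp2}) hold with constants independent of the mollification parameter, invoke the regular-case theorem for a uniform $\mathcal C^{1+\frac\alpha2,3+\alpha}$ bound, and pass to the limit by Ascoli--Arzel\`a, identifying the limit with the unique solution of \eqref{eq:hjb}. Your explicit appeal to the comparison principle in the identification step is slightly more careful than the paper's brief "pass to the limit in the equation", but it is the same argument in substance.
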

\begin{proof}
We consider two sequences $H^{2,k}$ and $H^{1,k}$ of smooth functions converging to $H^2$ and $H^1$ taking, for $\delta>0$, the convolutions
$$
H^{2,k}=H^2\star\rho_\delta\,,\qquad H^{1,k}=H^1\star\rho_\delta\,,
$$
where $\rho_\delta\ge0$ is a non-negative function with compact support in $B(0,\delta)$. Then we take the related solutions $u^k$ of \eqref{eq:hjb}.

It is immediate to prove that $H^{2,k}$ and $H^{1,k}$ satisfy conditions from \eqref{eq:hp0} to \eqref{eq:hp7} and from \eqref{eq:hp12} to \eqref{eq:hp14}, with all bounds independent on $k$. Just to give an example, we show that
$$
H^{1,k}_p(t,x,p)\cdot p-H^{1,k}(t,x,p)\ge -C\,.
$$
We have
\begin{align*}
&\qquad\qquad\qquad\qquad\qquad\qquad\quad H_p^{1,k}(t,x,p)\cdot p-H^{1,k}(t,x,p)\\
&=\int_0^T\int_{\R^d}\int_{-\infty}^{+\infty}\Big(H_p^1(t-s,x-y,p-r)\cdot p-H^1(t-s,x-y,p-r)\Big)\rho_\delta(s,y,r)\,dsdydr\\
&=\int_0^T\int_{\R^d}\int_{-\infty}^{+\infty}\Big(H_p^1(t-s,x-y,p-r)\cdot (p-r)-H^1(t-s,x-y,p-r)\Big)\rho_\delta(s,y,r)\,dsdydr\\
&+\int_0^T\int_{\R^d}\int_{-\infty}^{+\infty}\Big(H_p^1(t-s,x-y,p-r)\cdot r\Big)\rho_\delta(s,y,r)\,dsdydr\ge-C\,,
\end{align*}
where we use in the last passage hypotheses \eqref{eq:hp2} and \eqref{eq:hp4} for $H^1$.

Then, from the results of the previous  sections,
$$
\norm{u_k}_{1+\frac\alpha 2,3+\alpha}\le C\,.
$$
Hence, we can use Ascoli-Arzelà Theorem on any compact set $K\subset\R^d$ and obtain that $\exists\, u$ such that, up to subsequences, $u^k\to u$ pointwise with all the derivatives. Moreover, $u$ satisfies \eqref{eq:5.1}.

Passing to the limit in the equation of $u^k$, we obtain that $u$ satisfies \eqref{eq:hjb}. This concludes the Proposition.
\end{proof}

\begin{rem}
We stress the fact that condition \eqref{eq:hp4} is satisfied at least in the model case \eqref{eq:L3}, where
\begin{equation*}
\begin{split}
H^1(t,x,p)&=\inf\limits_{\alpha\in\mathcal U}\big\{\langle p,\alpha\rangle + L^1(t,x,\alpha)\big\}\,,\\
H^2(t,x,q)&=\inf\limits_{\eta\in\mathcal{S'}}\big\{\eta q+L_3(t,x,\eta)\big\}\,,
\end{split}
\end{equation*}
provided $L^1$ and $L_3$ are uniformly bounded and strictly convex with respect to the last variable.

Actually, the linear character of $\langle p,\alpha\rangle$ and $\eta q$ with respect to $\alpha$ and $\eta$ and the strict convexity of the Lagrangian functions immediately imply that the $\inf$ in $H^1$ and $H^2$ is attained at a unique point $\alpha_{t,x,p}$, $\eta_{t,x,q}$.

The uniqueness of the $argmin$ plays a crucial role in order to obtain the $\mathcal C^1$ character of $H^1$ and $H^2$ with respect to the last variable. Actually, we have
$$
H_p^1(t,x,p)=\alpha_{t,x,p}\,,\qquad H^2_q(t,x,q)=\eta_{t,x,q}\,,
$$
and so
\begin{align*}
|H_p^1(t,x,p)\cdot p-H^1(t,x,p)|=|L^1(t,x,\alpha_{t,x,p})|\le C\,,\\
|H^2_q(t,x,q)\cdot \-H^2(t,x,q)|=|L_3(t,x,\eta_{t,x,p})|\le C\,.
\end{align*}
Moreover, if we strengthen the regularity hypotheses on $L^1$ and $L_3$ with respect to all variables, we obtain that also conditions \eqref{eq:hp0},\dots,\eqref{eq:hp7} of Theorem \ref{thm:appliedkrylov} are satisfied.

Eventually, we note that the convexity condition of $L_3$ is satisfied if we require, in \eqref{eq:L2}, that $L^2$ is convex and non-increasing with respect to the last variable. Actually, from Remark \ref{rem},
$$
L_3(t,x,\eta)=L^2(t,x,\sqrt 2\eta)\,,
$$
and so, if $L^2$ is twice differentiable in the last variable, so is $L_3$, and it holds
$$
\partial^2_{\eta \eta}L_3(t,x,\eta)=\partial^2_{\sigma\sigma} L^2(t,x,\sqrt{2\eta})(2\eta)^{-\miezz}-\partial_\sigma L^2(t,x,\sqrt{2\eta})(2\eta)^{-\frac 32}>0\,.
$$
\end{rem}

\section{The Fokker-Planck Equation and The Mean Field Games System}\label{sec6}
The existence and uniqueness for the Fokker-Planck equation follow from well-known arguments, thanks to its linearity character. We just report the definition of solutions we will consider throughout the rest of the paper.

\begin{defn}\label{def:FP}
Let $m\in\mathcal C([0,T];\mathcal P(\R^d))$, $a\in L^\infty([0,T]\times\R^d)$ and $b\in L^\infty([0,T]\times\R^d;\R^d)$. Moreover, let $m_0\in\mathcal P(\R^d)$. Then we say that $m$ solves (in the sense of distribution) the Fokker-Planck equation
\begin{equation}\label{eq:fp}
\begin{split}
\left\{
\begin{array}{ll}
m_t-\Delta(a(t,x)m)+\mathrm{div}(mb(t,x))=0\,, & (t,x)\in(0,T)\times\R^d\,,\\
m(0,x)=m_0(x)\,, & x\in\R^d\,,
\end{array}
\right.
\end{split}
\end{equation}
if, for all smooth $\phi,\psi\in\mathcal C^\infty([0,T]\times\R^d)$, $\phi_T\in\mathcal C^\infty(\R^d)$, such that
$$
\begin{cases}
-\phi_t-a(t,x)\Delta\phi+b(t,x)\cdot D\phi=\psi\,,\\
\phi(T)=\phi_T\,,
\end{cases}
$$
it holds
$$
\int_{\R^d}\phi_T m(T,dx)+\int_0^T\int_{\R^d}\psi(t,x)m(t,dx)dt=\int_{\R^d}\phi(0,x)m_0(dx)\,.
$$
\end{defn}

Before starting the study of the Mean-Field Games system, we need to prove the following Proposition about some regularity estimates of $m$.

\begin{prop}\label{prop:fp}
Let $m$ be the unique solution in $\mathcal C([0,T];\mathcal P(\R^d))$ of \eqref{eq:fp}, in the sense of Definition \ref{def:FP}, where $a$ is uniformly elliptic, $a$, $b$ bounded in $L^\infty$, and $m_0\in\mathcal P(\R^d)$. Then $m$ satisfies, for a certain $p_0>1$ and $C>0$ both independent of $m_0$,
\begin{equation}\label{eq:stimem}
\begin{split}
\norm{m}_{L^p([0,T]\times\R^d)}+\sup\limits_{s\neq t}\frac{\dw(m(t),m(s))}{|t-s|^\miezz}+\sup\limits_{t\in[0,T]}\int_{\R^d}|x|\,m(t,dx)\le C\,,
\end{split}
\end{equation}
for all $p<p_0$.
\end{prop}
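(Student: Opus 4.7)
\emph{Strategy.} The three bounds in \eqref{eq:stimem} are essentially decoupled. I would prove each by inserting a well-chosen test function into the weak formulation of Definition~\ref{def:FP}, whose consequence for any time-independent smooth $\varphi$ is
\begin{equation*}
\int_{\R^d}\varphi\,m(t,dx)-\int_{\R^d}\varphi\,dm_0=\int_0^t\!\!\int_{\R^d}\bigl(a(\tau,x)\Delta\varphi-b(\tau,x)\cdot\nabla\varphi\bigr)m\,d\tau.
\end{equation*}

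\emph{Soft estimates.} For the first moment I would take $\varphi(x)=\sqrt{1+|x|^2}$, smooth with $|\nabla\varphi|\le1$ and $|\Delta\varphi|\le C$; a standard truncation makes $\varphi$ admissible in the weak formulation, and the displayed identity immediately gives $\int|x|\,m(t,dx)\le\int\sqrt{1+|x|^2}\,dm_0+CT$, uniformly bounded provided $m_0$ has a controlled first moment (standard in the MFG setting). For the time-H\"older estimate in $\dw$, I would pick a $1$-Lipschitz $\varphi$ with $\varphi(0)=0$ (whose integrability against $m(\tau)$ is ensured by the first moment bound) and mollify $\varphi_\eps:=\varphi*\rho_\eps$; the resulting $\varphi_\eps$ is smooth with $\|\nabla\varphi_\eps\|_\infty\le 1$, $\|D^2\varphi_\eps\|_\infty\le C/\eps$ and $\|\varphi-\varphi_\eps\|_\infty\le\eps$. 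Applying the identity to $\varphi_\eps$ on $[s,t]$ and using $|m(t)-m(s)|(\R^d)\le 2$,
\begin{equation*}
\Big|\int\varphi\,d(m(t)-m(s))\Big|\le 2\eps+\tfrac{C\|a\|_\infty}{\eps}(t-s)+\|b\|_\infty(t-s),
\end{equation*}
and choosing $\eps=\sqrt{t-s}$ yields $\dw(m(t),m(s))\le C\sqrt{|t-s|}$ after taking the Kantorovich-Rubinstein supremum.

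\emph{$L^p$ estimate: the main obstacle.} Here I would run the classical duality argument for Fokker-Planck equations with bounded measurable coefficients. Given $\psi\in\mathcal{C}^\infty_c((0,T)\times\R^d)$, let $\phi$ solve the backward dual problem $-\phi_t-a\Delta\phi+b\cdot\nabla\phi=\psi$, $\phi(T)=0$. The key ingredient is the parabolic Aleksandrov-Bakel'man-Pucci maximum principle of Krylov-Tso type, which under our hypotheses (uniform ellipticity and $L^\infty$ bound on $a$, $L^\infty$ bound on $b$) yields $\|\phi\|_{L^\infty}\le C\|\psi\|_{L^q}$ as soon as $q$ exceeds the critical parabolic exponent (essentially $q>(d+2)/2$). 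Definition~\ref{def:FP} then gives
\begin{equation*}
\int_0^T\!\!\int\psi\,m=\int_{\R^d}\phi(0)\,dm_0\le\|\phi(0)\|_\infty\le C\|\psi\|_{L^q},
\end{equation*}
and the supremum over $\psi$ with $\|\psi\|_{L^q}\le1$ gives $\|m\|_{L^{p}([0,T]\times\R^d)}\le C$ for every $p<p_0:=q'$ by duality. The main difficulty is precisely this ABP bound for the non-divergence dual problem when $a$ is only measurable: no variational (Moser) argument applies, and one must invoke the nonlinear maximum principle of Krylov-Tso, which is exactly the tool tailored to this setting.
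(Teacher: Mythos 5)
Your proposal is correct, and two of its three legs take a genuinely different route from the paper. For the Wasserstein--H\"older bound, the paper reverts to the probabilistic representation of $m(t)$ as the law of the SDE $dX_t=b\,dt+\sigma\,dB_t$ and estimates $\E[|X_t-X_s|]$ directly; your deterministic mollification of $1$-Lipschitz test functions (balancing $2\eps$ against $C\|a\|_\infty(t-s)/\eps$ and taking $\eps=\sqrt{t-s}$) reaches the same $\sqrt{|t-s|}$ rate without any stochastic input and sits more naturally on top of the dual formulation of Definition~\ref{def:FP}. For the first moment, the paper solves the dual backward problem with terminal datum $|x|$ and invokes an external linear-growth bound on its solution, whereas your direct test against $\sqrt{1+|x|^2}$, after a truncation, needs only the $L^\infty$ bounds on $a,b$; in both versions the constant inherits a dependence on $\int|x|\,m_0(dx)$, which the proposition's phrasing ``independent of $m_0$'' glosses over. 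For the $L^p$ bound, your duality scheme is essentially the paper's, but the key lemma differs: the paper cites the parabolic Calder\'on--Zygmund estimates of LSU Theorem~IV.9.1, which in their standard form require the leading coefficient $a$ to be at least continuous, while your appeal to the parabolic Aleksandrov--Bakel'man--Pucci (Krylov--Tso) maximum principle is in fact better matched to the stated hypotheses, where $a$ is merely $L^\infty$ and uniformly elliptic. One correction there: the critical ABP exponent for the non-divergence problem $-\phi_t-a\Delta\phi+b\cdot\nabla\phi=\psi$ is $q=d+1$, not $q>(d+2)/2$; the latter is the De Giorgi--Nash--Moser threshold for divergence-form equations and does not apply here. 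With $q=d+1$ you recover the paper's $p_0=1+1/d$.
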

\begin{proof}
We start assuming $m_0$ smooth, and we consider the process satisfying
$$
\begin{cases}
dX_t=b(t,X_t)\,dt+\sigma(t,X_t)\,dB_t\,,\\
X_0=Z\,,
\end{cases}
$$
where $Z$ is a random variable of law $m_0$, $(B_t)_t$ is a standard Brownian motion and $\sigma\sigma^*=a$. Then, assuming without loss of generality that $t>s$, we have
\begin{equation*}
\begin{split}
\E[|X_t-X_s|]\le\, &\E\left[\int_s^t|b(r,X_r)|\,dr\right]+\E\left[\left|\int_s^t\sigma(r,X_r)\,dB_r\right|\right]\\
\le\, & C|t-s|+\E\left[\int_s^t|\sigma(r,X_r)|^2\,dr\right]^\miezz\le C|t-s|^\miezz\,.
\end{split}
\end{equation*}
Since $X_t$ has law $m(t)$ and $X_s$ has law $m(s)$, we obtain
$$
\sup\limits_{s\neq t}\frac{\dw(m(t),m(s))}{|t-s|^\miezz}\le C\,.
$$
To prove the $L^p$ bound for $m$, we take $\psi\in L^q$, with $\psi\ge0$ a.e. and for a certain $q$ which will be specified later. Then we consider a sequence $\psi^n\in\mathcal C^\infty$ with $\psi^n\to\psi$ in $L^q$, and we take $\phi^n$ as the solution in $[0,T]\times\R^d$ of
$$
\begin{cases}
-\phi^n_t-a(t,x)\Delta\phi^n+b(t,x)\cdot D\phi^n=\psi^n\,,\\
\phi^n(T)=0\,,
\end{cases}
$$
where $\psi\in L^q(\R^d)$  Then, \emph{Theorem IV.9.1} of \cite{lsu} tells us that
$$
\norm{\phi^n}_{W^{1,q}([0,T]\times\R^d)}\le C\norm{\psi^n}_{L^q}\le C\norm{\psi}_{L^q}\,,
$$
for a possibly different constant $C$ in the last inequality. Hence, for $q>n+1$, $\phi^n$ satisfies a H\"older estimate in space and time variables, uniformly in $n$. Multiplying the equation of $m$ by $\phi^n$ and integrating by parts, we obtain
$$
\int_0^T\int_{\R^d}\psi^n(t,x)m(t,x)\,dxdt=\int_{\R^d}\phi^n(0,x)m_0(dx)\le C\norm{\psi}_{L^q}\,,
$$
Applying Fatou's Lemma, this implies that
$$
\norm{m}_{L^p(\R^d)}\le C\,,
$$
for all $p<p_0$, where $p_0=1+\frac 1n$ is the conjugate exponent of $n+1$.

Finally, we consider $\phi$ as the solution in $[0,t]\times\R^d$ of
$$
\begin{cases}
-\phi_t-a(s,x)\Delta\phi+b(s,x)\cdot D\phi=0\,,\\
\phi(t)=|x|\,.
\end{cases}
$$
From standard regularity results (see e.g. \cite{PorrettaPriola}), we know that
$$
\sup\limits_{s\in[0,t]}\norminf{\phi(s,\cdot)}\le C(1+|x|)\,.
$$
Hence, multiplying the equation of $m$ by $\phi$ and integrating by parts, we obtain
$$
\int_{\R^d}|x| m(t,x)\,dxdt=\int_{\R^d}\phi(0,x)\,m_0(dx)\le C\left(1+\int_{\R^d}m_0(dx)\right)\le C,,
$$
since $m_0\in\mathcal P(\R^d)$.

Since these estimates do not depend on the smoothness of $m_0$, with a standard approximation we can obtain \eqref{eq:stimem} $\forall\,m_0\in\mathcal P(\R^d)$, which concludes the Proposition.
\end{proof}

Now we are ready to study the full Mean-Field Games system.

We handle the case where our couplings $F$ and $G$ are \emph{non-local}.
\begin{hp}
$F:[0,T]\times\R^d\times\mathcal P(\R^d)\to\R$ is a $\mathcal C^{1+\alpha}$ function in the space variable and $\mathcal C^{\frac\alpha 2}$ in the time variable, for a certain $\alpha\in(0,1)$, satisfying
$$
\sup\limits_{m\in\mathcal P(\R^d)}\norm{F(\cdot,\cdot, m)}_{\frac\alpha 2,1+\alpha}\le C\,.
$$
Futhermore, $G$ is a $\mathcal C^{3+\alpha}$ function in the space variable, and $F$ and $G$ satisfy a Lipschitz estimate with respect to the measure:
\begin{equation*}
\begin{split}
\norm{F(t,\cdot,m_1)-F(t,\cdot,m_2)}_{1+\alpha}\le C\dw(m_1,m_2)\,, \qquad& \forall m_1,\,m_2\in\mathcal P(\R^d)\,,\\
\norm{G(\cdot,m_1)-G(\cdot,m_2)}_{3+\alpha}\le C\dw(m_1,m_2)\,, \qquad& \forall m_1,\,m_2\in\mathcal P(\R^d)\,.
\end{split}
\end{equation*}
\end{hp}

Observe that the Lipschitz character of $F$ and $G$ with respect to the measure variable is not a restrictive hypothesis, since in many applications the functions $F$ and $G$ have a linear or sublinear growth, both in the space and in the measure variable. See e.g. \cite{FestaRicciardi,GozziMasieroRosestolato,RicciardiRosestolato}.

We begin with the existence part.
\begin{thm}
Suppose the hypotheses of Proposition \ref{prop:prop} are satisfied, and suppose $m_0\in\mathcal P(\R^d)$. Then there exists at least one solution $(u,m)\in\mathcal C^{\frac{3+\alpha}2,3+\alpha}\times\mathcal C([0,T];\mathcal P(\R^d))$ for the Mean-Field Games system \eqref{eq:mfg}.

Furthermore, if one of these conditions is satisfied:
\begin{itemize}
\item [(i)] $H^1$ and $H^2$ strictly concave with respect to the last variable and $F$ and $G$ non-decreasing with respect to $m$:
\begin{equation}\label{hp:monoton}
\begin{split}
\int_{\R^d}\big(F(t,x,m_1)-F(t,x,m_2)\big)(m_1(dx)-m_2(dx))\ge0\,,\\
\int_{\R^d}\big(G(x,m_1)-G(x,m_2)\big)(m_1(dx)-m_2(dx))\ge0\,.
\end{split}
\end{equation}
\item [(ii)] $H^1$ and $H^2$ concave with respect to the last variable and $F$ and $G$ strictly increasing with respect to $m$, i.e. $F$ and $G$ satisfy \eqref{hp:monoton} and in addition:
\begin{equation*}
\begin{split}
&\int_{\R^d}\big(F(t,x,m_1)-F(t,x,m_2)\big)(m_1(dx)-m_2(dx))=0\implies F(t,x,m_1)=F(t,x,m_2)\,,\\
&\int_{\R^d}\big(G(x,m_1)-G(x,m_2)\big)(m_1(dx)-m_2(dx))=0\implies G(x,m_1)=G(x,m_2)\,,
\end{split}
\end{equation*}
\end{itemize}
then the solution is unique.
\end{thm}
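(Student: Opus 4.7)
I would set up a Schauder fixed point argument on the measure variable. The plan is to take $\mathcal K\subset\mathcal C([0,T];\mathcal P(\R^d))$ to be the set of measure flows meeting the bounds of Proposition \ref{prop:fp} --- uniformly bounded first moment and $1/2$-Hölder continuity in time for $\dw$ --- which is convex, and by tightness (from the first moment bound) together with the equicontinuity in time, relatively compact in $\mathcal C([0,T];\mathcal P(\R^d))$ equipped with uniform $\dw$-convergence. Given $\bar m\in\mathcal K$, one first solves the HJB equation
$$
-u_t - H^2(t,x,\Delta u) - H^1(t,x,\nabla u) = F(t,x,\bar m(t)),\qquad u(T,x)=G(x,\bar m(T)),
$$
where, by the hypotheses on $F,G$ in the measure variable, $F(\cdot,\cdot,\bar m)\in\mathcal C^{\alpha/2,1+\alpha}$ and $G(\cdot,\bar m(T))\in\mathcal C^{3+\alpha}$ with norms bounded independently of $\bar m$. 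Absorbing $F$ into a modified $H^1$ puts the equation into the framework of Proposition \ref{prop:prop}, yielding a unique classical $u$ with $\|u\|_{1+\alpha/2,3+\alpha}\le C$ independent of $\bar m$. Then the coefficients $a(t,x):=H^2_q(t,x,\Delta u)$ and $b(t,x):=H^1_p(t,x,\nabla u)$ lie in $\mathcal C^{\alpha/2,1+\alpha}$ with $a$ uniformly elliptic by \eqref{eq:hp0}, so the linear Fokker--Planck equation
$$
m_t-\Delta(a m)+\dive(b m)=0,\qquad m(0)=m_0,
$$
admits a unique solution $m\in\mathcal C([0,T];\mathcal P(\R^d))$; I set $\mathcal T\bar m:=m$. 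Proposition \ref{prop:fp} guarantees $m\in\mathcal K$ for suitably chosen constants defining $\mathcal K$, and continuity of $\mathcal T$ follows from the uniform estimates on $u$ (via compactness and passage to the limit in HJB) combined with stability of the linear FP equation under Hölder-convergent coefficients. Schauder's theorem then furnishes the sought fixed point, and the corresponding $u$ has regularity $\mathcal C^{1+\alpha/2,3+\alpha}$.

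\textbf{Uniqueness via Lasry--Lions duality.} The standard strategy is to compute $\frac{d}{dt}\int_{\R^d}(u_1-u_2)(m_1-m_2)\,dx$ for two solutions $(u_1,m_1)$, $(u_2,m_2)$, using the HJB equations for $\partial_t u_i$ and the FP equations for $\partial_t m_i$, together with integration by parts in $x$. After regrouping, one arrives at
\begin{align*}
0 = \int_{\R^d}\bigl(G(x,m_1(T))&-G(x,m_2(T))\bigr)(m_1(T)-m_2(T))\,dx \\
&+ \int_0^T\!\!\int_{\R^d}\bigl(F(t,x,m_1)-F(t,x,m_2)\bigr)(m_1-m_2)\,dxdt - \sum_{i=1,2}\int_0^T\!\!\int_{\R^d} m_i\,\Psi_i\,dxdt,
\end{align*}
where each $\Psi_i\ge 0$ encodes the ``concavity gap'' of $H^1$ at $\nabla u_i$ plus that of $H^2$ at $\Delta u_i$, measured against the companion values $\nabla u_{3-i},\Delta u_{3-i}$. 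Concavity of $H^1,H^2$ makes each $\Psi_i\ge 0$, while monotonicity of $F,G$ makes the first two summands $\ge 0$, so all three terms vanish. In case (i), strict concavity forces $\nabla u_1=\nabla u_2$ and $\Delta u_1=\Delta u_2$ on $\mathrm{supp}(m_1)\cup\mathrm{supp}(m_2)$, so $m_1,m_2$ solve the same linear FP equation with the same initial datum and coincide; the HJB comparison principle (Theorem \ref{thm:comparison}) then yields $u_1=u_2$. In case (ii), strict monotonicity of $F,G$ gives $F(t,\cdot,m_1)=F(t,\cdot,m_2)$ and $G(\cdot,m_1(T))=G(\cdot,m_2(T))$, so $u_1,u_2$ solve the same HJB and hence agree, which in turn forces $m_1=m_2$.

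\textbf{Main obstacle.} The principal technical point is not the Schauder step itself but justifying the duality computation rigorously: to perform the integration by parts in the uniqueness argument, I need the FP coefficients $a,b$ to be Hölder in $x$, which is exactly the reason the $\mathcal C^{3+\alpha}$ regularity for $u$ in Proposition \ref{prop:prop} is indispensable. Combined with the integrability $m\in L^p$ for some $p>1$ and the uniform first-moment bound of Proposition \ref{prop:fp}, this ensures that all spatial boundary terms at infinity vanish and that every product appearing in the Lasry--Lions identity is absolutely integrable, so the formal computation becomes rigorous.
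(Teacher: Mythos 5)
Your uniqueness argument is the same Lasry--Lions duality computation as the paper's (same identity, same sign analysis, same two-case conclusion), so that part is fine. The existence architecture is also the paper's -- Schauder on a convex set of measure flows, solving HJB via Proposition \ref{prop:prop} and then the linear Fokker--Planck equation, with the uniform bounds of Proposition \ref{prop:fp} -- but there is a genuine gap in your compactness step. A uniform bound on first moments gives tightness, hence relative compactness only for the narrow (weak-$*$) topology, \emph{not} for the metric $\dw$ in which $\mathcal C([0,T];\mathcal P(\R^d))$ is taken here: for instance $\mu_n=(1-\tfrac1n)\delta_0+\tfrac1n\delta_{ne_1}$ has first moments bounded by $1$ and converges narrowly to $\delta_0$, yet $\dw(\mu_n,\delta_0)=1$ for all $n$. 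Since Proposition \ref{prop:fp} only provides the first-moment bound and the $\tfrac12$-H\"older estimate in time, your set $\mathcal K$ is closed and convex but not compact for uniform $\dw$-convergence, so Schauder requires relative compactness of the image $\mathcal T(\mathcal K)$ -- and that is exactly what you do not prove: the appeal to ``stability of the linear FP equation under H\"older-convergent coefficients'' is the missing quantitative lemma, not a known fact you can cite, because the coefficients only converge locally uniformly while the measures must converge in $\sup_t\dw$.

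The paper closes precisely this gap with a probabilistic coupling argument: it writes $m_n$ as the law of the SDE with drift $H^1_p(s,\cdot,\nabla u_n)$ and diffusion $\sqrt{2H^2_q(s,\cdot,\Delta u_n)}$, couples two such processes through the same Brownian motion and initial datum, and estimates $\E\big[|X_t^n-X_t^k|^2\big]$ using the Lipschitz bounds on $H^1_p,H^2_q,\Delta u_k$, splitting the term $\int_0^t\int_{\R^d}|\Delta(u_n-u_k)|^2 m_n\,dx\,ds$ into a bounded set (handled by local uniform convergence of $\Delta u_n$ plus the $L^p$ bound on $m_n$) and its complement (handled by the uniform first-moment bound), and then Gronwall; this yields $\sup_t\dw(m_n(t),m_k(t))\to0$ along the subsequence on which $u_n$ converges in $\mathcal C^{1,3}_{loc}$, giving both relative compactness of the image and, a posteriori, continuity of the map. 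Your proposal could be repaired either by adding to $\mathcal K$ a uniform integrability constraint on first moments (propagated from $m_0$ through the SDE since drift and diffusion are bounded), which would make $\mathcal K$ genuinely $\dw$-compact, or by supplying the FP stability estimate along the paper's lines; as written, the existence proof is incomplete at this point.
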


\begin{proof}
The existence relies on Schauder fixed point Theorem. Similar ideas were developed in \cite{CDLL,RicciardiDirichlet,RicciardiNeumann}.

We consider the following metric space:
$$
X=\left\{\gamma\in\mathcal C([0,T];\mathcal P(\R^d))\,:\,\dw(\gamma(t),\gamma(s))\le C|t-s|^\miezz\right\}\,,
$$
where $C$ will be defined later. It is immediate to note that $X$ is a convex closed space.

We want to apply the Schauder fixed point Theorem. First, we define a suitable functional $\Phi$.

For $\gamma\in X$ we consider $u$ as the solution of the Hamilton-Jacobi equation
\begin{equation}\label{eq:phihjb}
\left\{
\begin{array}{ll}
\displaystyle u_t+H^2(t,x,\Delta u)+H^1(t,x,\nabla u)+F(t,x,\gamma(t))=0\,, & (t,x)\in[0,T]\times\R^d\,,\\
\displaystyle u(T,x)=G(x,\gamma(T))\,, & x\in\R^d\,,
\end{array}\right.
\end{equation}
and then we define $\Phi(\gamma)=m$, where $m$ is the solution of
\begin{equation}\label{eq:phifp}
\left\{
\begin{array}{ll}
\displaystyle m_t-\Delta(mH^2_q(t,x,\Delta u))+\mathrm{div}(mH^1_p(t,x,\nabla u))=0\,, & (t,x)\in(0,T)\times\R^d\,,\\
\displaystyle m(0,x)=m_0(x)\,, & x\in\R^d\,.
\end{array}\right.
\end{equation}
Thanks to the regularity results for $u$, we know that $m$ is well-defined in the space $\mathcal C([0,T];\mathcal P(\R^d))$.

Proposition \ref{prop:fp} implies that, if we choose wisely $C$ in the definition of $X$, we have $m\in X$ and, in addition to that, $\norm{m}_{L^p}\le C$, with $C$ not depending on $\gamma$.

In order to apply Schauder's Theorem we need to show the continuity of $\Phi$ and that $\Phi(X)$ is a relatively compact set.

We start by showing that $\Phi(X)$ is relatively compact.

Let $\{\gamma_n\}_n\subset X$, and let $u_n$ and $m_n$ be the solutions of \eqref{eq:phihjb} and \eqref{eq:phifp} related to $\gamma_n$. From \eqref{eq:5.1} we know that
$$
\norm{u_n}_{1+\frac\alpha 2,3+\alpha}\le C\,,
$$
where $C$ does not depend on $n$. Hence, Ascoli-Arzelà  and Banach-Alaoglu Theorems imply that $\exists\{u_{n_k}\}_k$, $u\in\mathcal C^{1+\frac\alpha2,3+\alpha}$ such that $u_{n_k}\to u$ in $\mathcal C^{1,3}_{loc}$ and all the derivatives converge pointwise.

For simplicity, we call the subsequence $u_{n_k}$ as $u_n$, forgetting the dependence on $k$.

To prove the convergence of $\{m_n\}_n$ (up to subsequences), we readapt the ideas from \cite{FestaRicciardi}. Consider the processes $X_s^n$ and $X_s^k$ satisfying
\begin{equation*}
\begin{split}
dX_s^n&=H_p^1(s,X_s^n,\nabla u_n(s,X_s^n))ds+\sqrt{2H^2_q(s,X_s^n,\Delta u_n(S,X_s^n))}dB_s\,,\\
dX_s^k&=H_p^1(s,X_s^k,\nabla u_k(s,X_s^k))ds+\sqrt{2H^2_q(s,X_s^k,\Delta u_k(S,X_s^k))}dB_s\,,
\end{split}
\end{equation*}
with $X_0^n=X_0^k=Z$, a process with density $m_0$. Then we have
\begin{equation}\label{eq:6.7}
\begin{split}
\E\left[|X_t^n-X_t^k|^2\right]&\le\E\left[\int_0^t|H_p^1(s,X_s^n,\nabla u_n(s,X_s^n))-H_p^1(s,X_s^k,\nabla u_k(s,X_s^k))|^2\,ds\right]\\
&+\E\left[\left|\int_0^t\left(\sqrt{2H^2_q(s,X_s^n,\Delta u_n(s,X_s^n))}-\sqrt{2H^2_q(s,X_s^k,\Delta u_k(s,X_s^k))}\right)dB_s\right|^2\right]\,.
\end{split}
\end{equation}
We analyze each term. The last term is equal to
\begin{equation*}
\begin{split}
&\E\left[\int_0^t\left(\sqrt{2H^2_q(s,X_s^n,\Delta u_n(s,X_s^n))}-\sqrt{2H^2_q(s,X_s^k,\Delta u_k(s,X_s^k))}\right)^2ds\right]\\
\le\,&\E\left[\int_0^t\left(\sqrt{2H^2_q(s,X_s^n,\Delta u_n(s,X_s^n))}-\sqrt{2H^2_q(s,X_s^n,\Delta u_k(s,X_s^n))}\right)^2ds\right]\\
+\,&\E\left[\int_0^t\left(\sqrt{2H^2_q(s,X_s^n,\Delta u_k(s,X_s^n))}-\sqrt{2H^2_q(s,X_s^k,\Delta u_k(s,X_s^k))}\right)^2ds\right]\\
\le\,&C\int_0^t\int_{\R^d}|\Delta(u_n-u_k)|^2m_n(s,x)\,dxds+C\int_0^t\E\left[|X_s^n-X_s^k|^2\right]\,ds
\end{split}
\end{equation*}
where we use the Lipschitz bounds of $H^2_q$ and $\Delta u_k$.

In order to handle the first integral in the right-hand side, we consider a bounded domain $E\subset\R^d$ and we write
\begin{equation*}
\begin{split}
&\int_0^t\int_{\R^d}|\Delta(u_n-u_k)|^2 m_n(s,x)\,dxds\\
=&\int_0^t\int_E|\Delta(u_n-u_k)|^2m_n(s,x)\,dxds+\int_0^t\int_{E^c}|\Delta(u_n-u_k)|^2m_n(s,x)\,dxds\,.
\end{split}
\end{equation*}
The first integral goes to $0$ for any $E$ bounded, thanks to the uniform convergence of $\Delta u_n$ in bounded sets and the $L^p$ bound of $m_n$. For the second integral, we note that $|\Delta(u_n-u_k)|^2$ is uniformly bounded and that $m_n$ has a finite first order moment thanks to \eqref{eq:stimem}. Hence, $\forall\,\eps>0$ we can choose $E$ sufficiently large such that
$$
\int_0^t\int_{E^c}|\Delta(u_n-u_k)|^2m_n(s,x)\,dxds\le\eps\,.
$$
For the arbitrariness of $\eps$, we obtain
\begin{equation*}
\begin{split}
&\E\left[\left|\int_0^t\left(\sqrt{2H^2_q(s,X_s^n,\Delta u_n(s,X_s^n))}-\sqrt{2H^2_q(s,X_s^k,\Delta u_k(s,X_s^k))}\right)dB_s\right|^2\right]\\
&\le\omega(n,k)+C\int_0^t\E\left[|X_s^n-X_s^k|^2\right]\,,
\end{split}
\end{equation*}
where $\omega(n,k)$ is a quantity converging to $0$ when $n,k\to+\infty$.

Similar estimates are made in order to bound the first term of \eqref{eq:6.7}. Hence we get
$$
\E\left[|X_t^n-X_t^k|^2\right]\le\omega(n,k)+C\int_0^t\E\left[|X_s^n-X_s^k|^2\right]\,,
$$
and so, applying Gronwall's inequality,
$$
\E[|X_t^n-X_t^k|]\le\E[|X_t^n-X_t^k|^2]\le C\omega(n,k)\,,
$$
which immediately implies, up to changing $\omega(n,k)$,
$$
\sup\limits_{t\in[0,T]}\dw(m_n(t),m_k(t))\le\omega(n,k)\,.
$$
Hence, we have proved that $\{m_n\}_n$ is a Cauchy sequence in $X$. Then, up to subsequences, $\exists\,m$ such that $m_n\to m$. This proves the relatively compactness of $\Phi(X)$.

The continuity is an easy consequence of the compactness. We consider $\gamma_n\to\gamma$. From the compactness, there exists subsequences $\{u_{n_k}\}_k$, $\{m_{n_k}\}_k$ converging to some $u$, $m$. Passing to the limit in the formulations of $u_{n_k}$ and $m_{n_k}$, we obtain that $u$ and $m$ are the (unique) solutions of \eqref{eq:phihjb} and \eqref{eq:phifp} related to $\gamma$ and $u$.

Then, for each converging subsequence $\{m_{n_h}\}_h$, we must have $m_{n_h}\to m$. This means that the whole sequence $\Phi(\gamma_n)=m_n\to m=\Phi(\gamma)$. This proves the continuity of $\Phi$.

So by Schauder's Theorem and obtain a classical solution of the MFG system \eqref{eq:mfg}.\\

To prove the uniqueness, let $(u_1,m_1)$, $(u_2,m_2)$ be two solutions of \eqref{eq:mfg}. We want to estimate in two different ways the quantity
$$
\int_0^T\int_{\R^d}\big[(u_1-u_2)(m_1-m_2)\big]_t\,dxdt\,.
$$
First, computing directly the time integral we obtain
\begin{equation}\label{eq:6.8}
\begin{split}
\int_{\R^d}\big[G(x,m_1(T))-G(x,m_2(T))\big](m_1(T)-m_2(T))\,dx\ge0\,.
\end{split}
\end{equation}
On the other hand, if we compute the derivative and use the weak formulation of $(u_1,m_1)$ and $(u_2,m_2)$, we obtain
\begin{equation*}
\begin{split}
-\int_0^T\int_{\R^d}\big[F(t,x,m_1(t))-F(t,x,m_2(t))\big](m_1(t,dx)-m_2(t,dx))\,dt\,\\
+\int_0^T\int_{\R^d}\big(H^2(t,x,\Delta u_2)-H^2(t,x,\Delta u_1)-H^2_q(t,x,\Delta u_1)\Delta(u_2-u_1)\big)\,m_1(t,dx)dt\,\\
+\int_0^T\int_{\R^d}\big(H^2(t,x,\Delta u_1)-H^2(t,x,\Delta u_2)-H^2_q(t,x,\Delta u_2)\Delta(u_1-u_2)\big)\,m_2(t,dx)dt\,\\
+\int_0^T\int_{\R^d}\big(H^1(t,x, Du_2)-H^1(t,x, Du_1)-H^1_p(t,x, Du_1)D(u_2-u_1)\big)\,m_1(t,dx)dt\,\\
+\int_0^T\int_{\R^d}\big(H^1(t,x, Du_1)-H^1(t,x, Du_2)-H^1_p(t,x, Du_2)D(u_1-u_2)\big)\,m_2(t,dx)dt.
\end{split}
\end{equation*}
Since $H^2$ and $H^1$ are concave functions, all the above integrals are non-positive.

Then, combining this result with \eqref{eq:6.8}, we obtain
$$
\int_0^T\int_{\R^d}\big[(u_1-u_2)(m_1-m_2)]_t=0\,,
$$
which means
\begin{equation*}
\begin{split}
\int_{\R^d}\big[G(x,m_1(T))-G(x,m_2(T))\big](m_1(T)-m_2(T))\,dx=0\,,\\
\int_0^T\int_{\R^d}\big[F(t,x,m_1(t))-F(t,x,m_2(t))\big](m_1(t,dx)-m_2(t,dx))\,dt=0\,,\\
\int_0^T\int_{\R^d}\big(H^2(t,x,\Delta u_i)-H^2(t,x,\Delta u_j)-H^2_q(t,x,\Delta u_j)\Delta(u_i-u_j)\big)\,m_j(t,dx)dt=0\,,\\
\int_0^T\int_{\R^d}\big(H^1(t,x, Du_i)-H^1(t,x, Du_j)-H^1_p(t,x, Du_j)D(u_i-u_j)\big)\,m_j(t,dx)dt=0\,,
\end{split}
\end{equation*}
for $(i,j)=(1,2)$ or $(2,1)$.

This allows us to conclude: actually, thanks to the hypotheses, there are two cases:
\begin{itemize}
\item [(i)] if $F$ and $G$ are strictly increasing, then
$$
F(t,x,m_1(t))=F(t,x,m_2(t))\,,\qquad G(x,m_1(T))=G(x,m_2(T))\,.
$$
Hence, $u_1$ and $u_2$ solve the same HJB equation, which implies $u_1=u_2$. Coming back to the FP equation, we have that $\Delta u_1=\Delta u_2$ and $Du_1=Du_2$. So $m_1$ and $m_2$ solve the same FP equation, which implies $m_1=m_2$;
\item [(ii)] if $H^1$ and $H^2$ are strictly concave, we argue in a similar way. Actually, we have, for all $t\in[0,T]$, for $x\in\,supp(m_1(t))\cup\,supp(m_2(t))$,
$$
H_p^1(t,x,Du_1)=H_p^1(t,x,Du_2)\,,\qquad H^2_q(t,x,\Delta u_1)=H^2_q(t,x,\Delta u_2)\,,
$$
which implies
$$
m_1H_p^1(t,x,Du_1)=m_1 H_p^1(t,x,Du_2)\,,\qquad m_1 H^2_q(t,x,\Delta u_1)=m_1 H^2_q(t,x,\Delta u_2)\,.
$$
Hence, $m_1$ and $m_2$ solve the same FP equation, which implies $m_1=m_2$. Coming back to the HJB equation, we have that $F(t,x,m_1(t))=F(t,x,m_2(t))$ and $G(x,m_1(T))=G(x,m_2(T))$. So $u_1$ and $u_2$ solve the same HJB equation, which implies $u_1=u_2$.
\end{itemize}
Hence, the proof of uniqueness is completed.
\end{proof}

\section*{Acknowledgments} The authors wish to sincerely thank P. Cardaliaguet, F. Da Lio, A. Porretta and M. Soner for the help and the support during the preparation of this article. We wish also to thank A. Neufeld and F. Delarue for the inspiring suggestions he gave to us.

\end{document}